\newtheorem{theorem}{Theorem}[section]
\newtheorem{lemma}[theorem]{Lemma}
\newtheorem{corollary}[theorem]{Corollary}
\newtheorem{setup}[theorem]{}
\newtheorem{question}[theorem]{Question}
\newtheorem{_definition}[theorem]{Definition}
\newtheorem{_remark}[theorem]{Remark}
\newenvironment{remark}{\begin{_remark}\rm}{\end{_remark}}
\newtheorem{_claim}[theorem]{Claim}
\newenvironment{claim}{\begin{_claim}\rm}{\end{_claim}}
\numberwithin{equation}{section}
\numberwithin{table}{section}
\numberwithin{figure}{section}
\newcommand{\B}{\mathord{\mathbb B}}
\newcommand{\C}{\mathord{\mathbb C}}
\newcommand{\F}{\mathord{\mathbb F}}
\newcommand{\N}{\mathord{\mathbb N}}
\renewcommand{\P}{\mathord{\mathbb P}}
\newcommand{\Q}{\mathord{\mathbb Q}}
\newcommand{\R}{\mathord{\mathbb R}}
\newcommand{\Z}{\mathord{\mathbb Z}}
\newcommand{\KK}{\mathord{\mathcal K}}
\newcommand{\OO}{\mathord{\mathcal O}}
\newcommand{\alb}{{\text{\rm alb}}}
\newcommand{\Alb}{{\text{\rm Alb}}}
\newcommand{\Aut}{\text{\rm Aut}}
\newcommand{\Bir}{{\text{\rm Bir}}}
\newcommand{\Gal}{\text{\rm{Gal}}}
\newcommand{\Hom}{\text{\rm Hom}}
\newcommand{\id}{\text{\rm id}}
\newcommand{\Imm}{\text{\rm Im}}
\newcommand{\Ker}{\text{\rm Ker}}
\newcommand{\MRC}{{\text{\rm MRC}}}
\newcommand{\Nef}{\text{\rm Nef}}
\newcommand{\NS}{\text{\rm NS}}
\newcommand{\ord}{\text{\rm ord}}
\newcommand{\Pic}{\text{\rm Pic}}
\newcommand{\rank}{\text{\rm rank}}
\newcommand{\torsion}{\text{\rm torsion}}
\newcommand{\variety}{{\text{\rm variety}}}
\newcommand{\vol}{{\text{\rm vol}}}
\newcommand{\SL}{\mathord{\hbox{\sl{SL}}}}
\begin{document}
\begin{large}
\title
[Dynamics of automorphisms] {Dynamics of automorphisms on projective
complex manifolds}
\author{De-Qi Zhang}
\address
{
\textsc{Department of Mathematics} \endgraf
\textsc{National University of Singapore, 2 Science Drive 2,
Singapore 117543, Singapore
\endgraf
and
\endgraf
Max-Planck-Institut f\"ur Mathematik,
Vivatsgasse 7,
53111 Bonn, Germany
}}
\email{matzdq@nus.edu.sg}

\begin{abstract}
We show that the dynamics of automorphisms on all projective complex
manifolds $X$ (of dimension 3, or of any dimension but assuming the Good
Minimal Model Program or Mori's Program) are canonically built up
from the dynamics on just three types of projective complex
manifolds:
complex tori, weak Calabi-Yau manifolds and
rationally connected manifolds.
As a by-product, we confirm
the conjecture of Guedj \cite{Gu06} for automorphisms on
3-dimensional projective manifolds, and also determine $\pi_1(X)$.

\end{abstract}

\subjclass{14J50, 14E07, 32H50}
\keywords{automorphism of variety, Kodaira dimension, topological entropy}

\maketitle

\section{Introduction}

We work over the field $\C$ of complex numbers.

\par
We show that the dynamics of automorphisms on all projective complex
manifolds (of dimension 3, or of any dimension but assuming the Good
Minimal Model Program or Mori's Program) are canonically built up
from the dynamics on just three types of projective complex
manifolds:
complex tori, weak Calabi-Yau manifolds, and rationally
connected manifolds.

\par
For a similar phenomenon on the dynamics in dimension 2, we refer to \cite{Ca99}.
Here a projective manifold $X$ is {\it weak Calabi-Yau} or simply
{\it wCY} if the Kodaira dimension $\kappa(X) = 0$ and the
first Betti number $B_1(X) = 0$.
A projective manifold $X$ is {\it rationally connected}
(the higher dimensional analogue of a rational surface) if any two points
on $X$ are connected by a rational curve; see \cite{Cp} and \cite{KoMM}.
For a smooth projective surface $X$, it is wCY if and only if $X$ itself or its
\'etale double cover is birational to a K3 surface, while $X$
is rationally connected if and only if it is a rational surface.
\par
For the recent development on complex dynamics, we refer to the
survey articles \cite{Fr06} and \cite{Zs} and the references therein.
See also \cite{Ca01}, \cite{DS05}, \cite{McP2}, \cite{NZ}
and \cite{ICCM}.
For algebro-geometric approach to dynamics of automorphisms due to
Oguiso, see \cite{Og03}, \cite{Og04} and \cite{Og06}.
\par
We shall consider dynamics of automorophisms on projective complex
manifolds of dimension $\ge 3$. To focus on the dynamics of
genuinely high dimension, we introduce the notions of {\it rigidly
parabolic} pairs $(X, g)$ and pairs $(X, g)$ of {\it primitively
positive entropy}, where $X$ is a projective manifold of
$\dim X \ge 3$, and $g \in \Aut(X)$. In other words, these are the
pairs where the dynamics are not coming from the dynamics of lower
dimension; see Convention \ref{entropy}, and also Lemma \ref{ps} for
the classification of the rigidly parabolic pairs in the case of
surfaces. These notions might be the geometrical incarnations of
McMullen's lattice-theoretical notion ''essential lattice isometry''
in \cite{Mc02} \S 4. By the way, all surface automorphisms of
positive entropy are automatically primitive.
\par
In Theorem \ref{Thk} below and Theorem \ref{Thq} of \S 3, it is shown that a pair
$(X, g)$ of rigidly parabolic or of primitively positive entropy
exists only when the Kodaira dimension $\kappa(X) \le 0$ and
irregularity $q(X) \le \dim X$. If $(Y, g)$ is just of positive
entropy, then one can only say that $\kappa(Y) \le \dim Y - 2$ which
is optimal by Lemma \ref{k-2}.
\par
In Theorems \ref{Thk=0} and \ref{Thk=-1} of \S 3, we determine the
$g$-structure of manifolds $X$ of dimension $\ge 3$, with $\kappa(X)
= 0$ and $-\infty$, respectively. The difficult part in Theorem
\ref{Thk=0} is to show that the regular action of $g$ on the initial
manifold $X$ is {\it equivariant} to a nearly regular action on
another better birational model $X'$; see Convention
\ref{entropy}. Such difficulty occurs only in dimension $\ge 3$ due
to the high-dimensional new phenomenon of non-uniqueness and
non-smoothness of minimal models of a birational class. Recall that
a variety $Z$ with mild terminal singularities is {\it minimal} if
the canonical divisor $K_Z$ is nef (= numerically effective).
\par
We now state the main results.
The result below says that dynamics occur essentially
only on those $X$ with Kodaria dimension $\kappa(X) \le 0$.

\par
In the following, $(X, g)$ is of {\it primitively positive entropy} if $g$ is of positive entropy but
$g$ is not induced from an automorphism on a manifold of
lower dimension; $(X, g)$ is {\it rigidly parabolic} if $g$ is parabolic and every descent of
$g$ to a lower and positive dimensional manifold is parabolic,
while an automorphism $h$ is {\it parabolic} if it is of null entropy and $\ord(h) = \infty$;
see \ref{entropy} for the precise definitions.

\begin{theorem}\label{Thk}
Let $X$ be a projective complex manifold
of $\dim X \ge 2$, and with
$g \in \Aut(X)$. Then we have:
\begin{itemize}
\item[(1)] Suppose that $(X, g)$ is either rigidly parabolic
or of primitively positive entropy $($see $(\ref{entropy})$$)$. Then
the Kodaira dimension $\kappa(X) \le 0$.
\item[(2)]
Suppose that $\dim X = 3$ and $g$ is of positive entropy.
Then $\kappa(X) \le 0$, unless $d_1(g^{-1}) = d_1(g)= d_2(g) = e^{h(g)}$
and it is a Salem number. Here $d_i(g)$ are
dynamical degrees and $h(g)$ is the entropy
$($see $(\ref{entropy}))$.
\end{itemize}
\end{theorem}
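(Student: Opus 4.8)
The plan is to run the proof through the $g$-equivariant Iitaka fibration and to measure the dynamics by the dynamical degrees, which are birational invariants. Assume for contradiction that $\kappa(X) \ge 1$. If $\kappa(X) = \dim X$ then $X$ is of general type and $\Aut(X)$ is finite, so $g$ has finite order; this is incompatible both with positive entropy and with $\ord(g) = \infty$, so neither a rigidly parabolic nor a primitively positive entropy pair can occur. Hence we may assume $1 \le \kappa(X) < \dim X$. Since $g^*$ acts on each $H^0(X, mK_X)$, the pluricanonical maps are $g$-equivariant, so after an equivariant resolution of the indeterminacy (and of the base) we obtain a fibration $\pi \colon X' \to Y$ with $X'$ birational to $X$, a lift $g' \in \Aut(X')$ and a descent $g_Y \in \Aut(Y)$ satisfying $\pi g' = g_Y \pi$, where $0 < \dim Y = \kappa(X) < \dim X$ and the general fibre $F$ has $\kappa(F) = 0$.

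For part (1) I would then invoke the product formula for dynamical degrees of semi-conjugate meromorphic maps (Dinh and Nguyen): each $d_j(g') = \max_{k} d_{k}(g_Y)\,\lambda_{j-k}$, where the $\lambda_\bullet$ are the relative dynamical degrees of $g'$ along the fibres of $\pi$. In the primitively positive entropy case some $d_j(g') > 1$, so in the maximum either a base degree $d_k(g_Y) > 1$ or a relative degree $\lambda_{j-k} > 1$. In the first case the top dynamical degrees of $g'$ and $g_Y$ agree, so $g$ is induced from the lower-dimensional pair $(Y, g_Y)$, contradicting primitivity. In the second case the expansion lives in the fibres; since $\dim F < \dim X$ and $\kappa(F) = 0$, one realises this fibre dynamics as coming from a lower-dimensional manifold (passing, if necessary, to a $g$-periodic fibre or to the relative construction over $Y$), again contradicting primitivity. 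The rigidly parabolic case runs in the same frame with ``parabolic'' replacing ``positive entropy'': the descent $g_Y$ is parabolic by hypothesis, and combining this with the canonical bundle formula for $\pi$ and induction on $\dim X$ forces $\ord(g) < \infty$, contradicting parabolicity. Thus $\kappa(X) \le 0$.

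For part (2), I would specialise the same fibration to $\dim X = 3$ and $h(g) > 0$, and track the Salem structure. The case $\kappa(X) = 3$ is excluded as above. If $\kappa(X) = 2$ then $Y$ is a surface and the fibres are elliptic curves, whose automorphisms have null entropy, so all relative degrees equal $1$; the formula gives $d_1(g') = d_2(g') = d_1(g_Y)$, and as a positive entropy surface automorphism $g_Y$ has $d_1(g_Y)$ equal to a Salem number (McMullen \cite{Mc02}), while Poincar\'e duality $d_i((g')^{-1}) = d_{3-i}(g')$ yields $d_1((g')^{-1}) = d_2(g') = d_1(g')$. If $\kappa(X) = 1$ then $Y$ is a curve, so $g_Y$ has null entropy and $d_1(g_Y) = 1$; the entropy is carried by the fibres, which are surfaces of Kodaira dimension $0$, and the relative first dynamical degree $\lambda_1$ is again a Salem number. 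The product formula together with relative Poincar\'e duality then yields $d_1(g') = d_2(g') = \lambda_1 = e^{h(g)}$ and $d_1((g')^{-1}) = d_1(g')$. In every case with $\kappa(X) \ge 1$ and positive entropy we land exactly in the asserted exceptional configuration, which is the contrapositive of part (2).

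The main obstacle is the bookkeeping of dynamical degrees through $\pi$: one must control the relative degrees $\lambda_\bullet$ and combine the product formula with Poincar\'e duality $d_i(g) = d_{\dim X - i}(g^{-1})$, while correctly feeding in the low-dimensional inputs (finiteness of $\Aut$ in the general type case, the Salem property of $d_1$ for automorphisms of surfaces of Kodaira dimension $0$, and the vanishing of entropy on curves and on elliptic fibres). In part (1) the genuinely delicate point is the fibre case: one must turn ``expansion in the fibres'' into an honest lower-dimensional source in order to contradict primitivity, and this is precisely where the hypothesis that $(X,g)$ is \emph{primitive} (rather than merely of positive entropy) does the real work, explaining why one obtains $\kappa(X) \le 0$ here but only $\kappa \le \dim X - 2$ in general.
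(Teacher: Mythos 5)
There is a genuine gap in your part (1), and it sits exactly at the point you flag as "delicate". Recall that in this paper a pair is \emph{imprimitive} only if some equivariant surjection $f : Y \to Z$ with $0 < \dim Z < \dim Y$ has $g|Z$ of positive entropy (case (10a)) or $g|Z$ \emph{periodic} (case (10b)); a merely \emph{parabolic} descent (null entropy, infinite order) triggers neither clause, and is of course exactly what rigid parabolicity demands. So in your "expansion lives in the fibres" branch, where all base dynamical degrees equal $1$, you cannot conclude anything: a parabolic $g_Y$ is perfectly compatible with $(X,g)$ being primitive, and your attempt to "realise the fibre dynamics as coming from a lower-dimensional manifold" does not produce the required equivariant surjective morphism onto a lower-dimensional target. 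Likewise, in the rigidly parabolic case your appeal to "the canonical bundle formula and induction on $\dim X$" to force $\ord(g) < \infty$ is not an argument. The missing ingredient, which is the entire content of the paper's Lemma 2.12, is the theorem of Nakamura--Ueno and Deligne (Ueno, Theorem 14.10): for the Iitaka fibration $Y \to Z$ the image of $\Bir(Y) \to \Bir(Z)$ is \emph{finite}, so the descent of $g$ to the Iitaka base is \emph{periodic}, not just parabolic. Once you know this, both cases close instantly: a periodic descent to a positive-dimensional base is precisely case (10b) of imprimitivity, and it contradicts rigid parabolicity outright. No product formula for dynamical degrees is needed (and the Dinh--Nguyen formula you invoke is in any case far heavier machinery than the paper uses).

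Your part (2) is closer in spirit to the paper but still leaves the key point unproved. The paper reduces, via part (1), to the imprimitive case and then applies its Lemma 2.9: using the equivariant fibration coming from imprimitivity, it restricts $\NS(X)$ to a general fibre $F$, shows the resulting lattice $L = (\NS(X)|F)/(\torsion)$ is nondegenerate of signature $(1,r)$, checks that $g^*|L$ is a well-defined isometry (because $g^*F = F$ in $\NS(X)$, even when $g$ permutes the fibres), and runs McMullen's Salem-number argument on $L$. Your assertion that "the relative first dynamical degree $\lambda_1$ is again a Salem number" is precisely this statement, and it needs that lattice argument; it does not follow from the fibres being surfaces of Kodaira dimension $0$, especially since $g$ need not stabilize any single fibre. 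You should also note that the equalities $d_1(g^{-1}) = d_1(g) = d_2(g)$ come from the pairing $H^{1,1} \times H^{2,2} \to \C$ (the paper's Lemma 2.5) together with this lattice computation, not from the product formula.
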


If $(X, g)$ does not satisfy the conditions in Theorem \ref{Thk}(1), then a positive power of $g$ is induced from some automorphism on a manifold of lower dimension.
So by Theorem \ref{Thk} and by the induction on the dimenion, we have only to treat the dynamics on
those $X$ with $\kappa(X) = 0$ or $-\infty$. This is done in
Theorems \ref{Thk=0} and \ref{Thk=-1}. See the statements in \S 3
for details.

\par
As sample results, we now give some applications to our results in \S 3 in the
case of threefolds.
\par
The result below says that $3$-dimensional dynamics of positive
entropy (not necessarily primitive) are just those of $3$-tori, weak
Calabi-Yau $3$-folds and rational connected $3$-folds, unless
dynamical degrees are Salem numbers.

\begin{theorem}\label{3k}
Let $X'$ be a smooth projective complex threefold. Suppose that $g
\in \Aut(X')$ is of positive entropy.
Then there is a pair $(X, g)$ birationally equivariant to $(X', g)$,
such that one of the cases below occurs.
\begin{itemize}
\item[(1)]
There are a $3$-torus $\tilde{X}$ and
a $g$-equivariant \'etale Galois cover $\tilde{X} \rightarrow X$.
\item[(2)]
$X$ is a weak Calabi-Yau threefold.
\item[(3)]
$X$ is a rationally connected threefold in the sense of \cite{Cp} and \cite{Ko}.
\item[(4)]
$d_1(g^{-1} | X) = d_1(g | X)= d_2(g | X) = e^{h(g | X)}$
and it is a Salem number.
\end{itemize}
\end{theorem}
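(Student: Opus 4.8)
The plan is to feed $(X',g)$ into the structure theorems of \S 3 according to the Kodaira dimension, the case division being governed by Theorem \ref{Thk}(2). Since the dynamical degrees $d_i$, the entropy $h$, and the property of being a Salem number are all invariant under birational equivariance (a birational map conjugating the two $g$'s preserves them), it suffices to produce \emph{some} model $(X,g)$ birationally equivariant to $(X',g)$ that lands in one of the four cases; in particular, if the exceptional alternative of Theorem \ref{Thk}(2) holds we are already in case (4) with $X=X'$. First I would therefore invoke Theorem \ref{Thk}(2): either $d_1(g^{-1})=d_1(g)=d_2(g)=e^{h(g)}$ is a Salem number, whence case (4), or else $\kappa(X')\le 0$. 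It then remains to treat $\kappa(X')=0$ and $\kappa(X')=-\infty$ separately.

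Suppose $\kappa(X')=0$. Here I would apply Theorem \ref{Thk=0}, which replaces $X'$ by a birationally equivariant good model $X$ on which $g$ acts nearly regularly and on which a Beauville--Bogomolov type decomposition is available. Since $\kappa(X)=0$ the Albanese morphism $\alb\colon X\to\Alb(X)$ is surjective with connected fibres, so $q(X)=\dim\Alb(X)\le 3$; being canonical, $\alb$ is $g$-equivariant. After passing to a finite $g$-equivariant \'etale cover the decomposition writes the cover of $X$ as a product of a torus and a simply connected factor with trivial canonical class. When $q(X)=3$ the Albanese morphism realizes case (1) with $\tilde X$ a $3$-torus; when $q(X)=0$ one has $B_1(X)=0$ and $X$ is weak Calabi--Yau, which is case (2); and in the remaining mixed cases (for example an elliptic curve times a $K3$ surface) the abelian factor carries null entropy, so the product computation of dynamical degrees forces $d_1(g^{-1})=d_1(g)=d_2(g)=e^{h(g)}$ to be a Salem number (using that a surface automorphism has top dynamical degree $1$), placing the pair in case (4).

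Suppose instead $\kappa(X')=-\infty$. Now I would apply Theorem \ref{Thk=-1} together with the maximal rationally connected (MRC) fibration, a canonical and hence $g$-equivariant dominant rational map from (a model of) $X$ onto a base $Z$ that is not uniruled, so $\kappa(Z)\ge 0$ and $\dim Z\le 2$. If $Z$ is a point then $X$ is rationally connected and we are in case (3). If $\dim Z\ge 1$, then $g$ descends to an automorphism $g_Z$ on a model of $Z$; since $Z$ is a non-uniruled curve or surface, the positive entropy of $g$ is not primitive, and the (relative) dynamical-degree formula for the fibration — with the entropy concentrated in a single factor of top degree $1$ — again yields the equalities of case (4).

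The main obstacle is the $\kappa(X')=0$ step, precisely the point flagged before Theorem \ref{Thk}: passing from the given regular action on $X'$ to a \emph{nearly regular} action on a good birational model $X$ to which the decomposition theorem applies $g$-equivariantly. In dimension $3$ the non-uniqueness and possible non-smoothness of minimal models means one cannot simply run an equivariant minimal model program while keeping the action regular; reconciling the canonical constructions (Albanese, MRC, the $K$-trivial decomposition) with the dynamics, and verifying that the residual mixed fibred cases are genuinely captured by the Salem condition, is where the real work lies. By contrast, once the good model is in hand, the sorting into (1)--(4) is essentially bookkeeping of $q(X)$ and of product dynamical degrees.
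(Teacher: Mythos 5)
Your global strategy (dichotomy from Theorem \ref{Thk}, then structure theorems for $\kappa=0$ and $\kappa=-\infty$) is the paper's, but there are two genuine gaps. First, a logical one: Theorems \ref{Thk=0} and \ref{Thk=-1} carry the hypothesis that $(X,g)$ is rigidly parabolic or of \emph{primitively} positive entropy, and knowing only ``$\kappa\le 0$'' from Theorem \ref{Thk}(2) does not give you that. You must first split off the imprimitive case and send it to case (4) by Lemma \ref{3nef} (which is exactly how the paper begins); only then may you feed the primitive pairs into the structure theorems. Related to this, your claim in the $\kappa=-\infty$ branch that ``since $Z$ is a non-uniruled curve or surface, the positive entropy of $g$ is not primitive'' is false: $g|Z$ can be parabolic (an infinite-order translation on an elliptic curve), which witnesses neither alternative (10a) nor (10b) of imprimitivity. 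This is precisely the surviving primitive case, treated in Theorem \ref{3k=-1}(2): $q(X)=1$ and $\alb_X:X\to\Alb(X)$ is a fibration over an elliptic curve with rational surface fibres. For that case your ``relative dynamical-degree formula'' can at best give the equalities $d_1(g^{-1})=d_1(g)=d_2(g)$; it does not show this common value is a \emph{Salem} number. The paper gets that from Lemma \ref{salem}(2), i.e.\ McMullen's lattice argument applied to the signature-$(1,r)$ lattice $(\NS(X)|F)/(\torsion)$ of the fibration over the curve --- an ingredient missing from your sketch.

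Second, in the $\kappa=0$ branch your ``mixed cases'' (e.g.\ an elliptic curve times a $K3$) are handled incorrectly in route, though the conclusion happens to be safe. For a \emph{primitive} pair in dimension $3$ with $\kappa=0$ and $q>0$, Theorem \ref{Thk=0} forces Case (1) ($Q$-torus), because its Case (2) requires a Calabi--Yau factor $F$ with $\dim F\ge 3$ and hence $\dim X\ge 4$; the mechanism excluding the $2$-dimensional factor is Lemma \ref{ext}(7): a $K3$ or Enriques factor carries a $g$-stable elliptic pencil over $\P^1$ on which $g$ acts periodically, so such a pair is automatically imprimitive and was already disposed of by Lemma \ref{3nef}. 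So the sorting for primitive pairs with $\kappa=0$ is $q=0\Rightarrow$ case (2) and $q>0\Rightarrow$ case (1), with no residual case (4). Finally, the $g$-equivariant splitting you invoke is not the smooth Beauville--Bogomolov decomposition but Nakayama's theorem on splitting the Albanese fibration of a good minimal model after a finite \'etale base change, combined with Hanamura's structure of $\Bir(\bar F)$ when $q(F)=0$; you correctly flag this as the hard point, but it is where the proof actually lives and cannot be left as an appeal to a ``decomposition being available.''
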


\par
The higher dimensional analogue of the result above is summarized in
Theorems \ref{Thk=0} and \ref{Thk=-1} in \S 3, and it confirms
the claim about the building blocks of dynamics made in the abstract of the paper.
In the results there,
we need to assume the existence of Good Minimal Models for non-uniruled
varieties. The recent breakthrough in \cite{BCHM} on the existence of flips
and the finite generation of canonical rings suggests
that such existence problem of a {\it usual}
minimal model is quite within the reach in a near future,
but the question about the Goodness of a minimal model in dimension $\ge 4$
might be much more difficult than the same question in dimension three
affirmatively answered by Kawamata and Miyaoka.

\par
The following confirms the conjecture of Guedj \cite{Gu06} page 7
for automorphisms on 3-dimensional projective manifolds;
for the general case, see \cite[Appendix A]{NZ} or \cite[Theorem 1.1]{CH}.

\begin{theorem}\label{Thg}
Let $X$ be a smooth projective complex threefold admitting
a cohomologically hyperbolic automorphism $g$ in the sense of
\cite{Gu06} page $3$. Then the Kodaira dimension $\kappa(X) \le 0$.
More precisely, either $X$ is a weak Calabi-Yau threefold,
or $X$ is rationally connected, or there is a $g$-equivariant
birational morphism $X \to T$ onto a $Q$-torus.
\end{theorem}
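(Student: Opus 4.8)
The plan is to deduce Theorem~\ref{Thg} from the threefold trichotomy of Theorem~\ref{3k}, using cohomological hyperbolicity solely to discard the Salem case. First I would unwind the hypothesis. For an automorphism $g$ of a smooth projective threefold the dynamical degrees satisfy $d_0(g) = d_3(g) = 1$, and $p \mapsto \log d_p(g)$ is concave (see $(\ref{entropy})$); by Guedj's definition \cite{Gu06} $g$ is cohomologically hyperbolic precisely when $\max_p d_p(g)$ is attained at a single index. Since the two extreme values equal $1$, uniqueness of the maximizing index forces it to be interior and to carry a value $> 1$; hence $h(g) = \log\max(d_1(g), d_2(g)) > 0$ and, crucially, $d_1(g) \ne d_2(g)$. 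In particular $g$ has positive entropy, so Theorem~\ref{3k} produces a pair $(W, g)$ birationally equivariant to $(X, g)$ realizing one of its cases $(1)$--$(4)$.

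Next I would eliminate case $(4)$. All dynamical degrees are invariants of birational conjugacy, so the equivariant birational equivalence $X \dashrightarrow W$ gives $d_i(g \,|\, W) = d_i(g \,|\, X)$ for $i = 1, 2$, whence $d_1(g \,|\, W) \ne d_2(g \,|\, W)$; but case $(4)$ asserts exactly $d_1(g \,|\, W) = d_2(g \,|\, W)$, a contradiction. This is the only point at which the hypothesis enters, and I expect it to be the step demanding the most care, since it rests on the birational invariance of the \emph{entire} sequence of dynamical degrees, not merely the first.

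With case $(4)$ excluded, cases $(2)$ and $(3)$ transfer back to $X$ for free: the Kodaira dimension, the first Betti number $B_1$, and rational connectedness are all birational invariants among smooth projective threefolds, so if $W$ is weak Calabi-Yau (resp.\ rationally connected) then so is the smooth threefold $X$. In each surviving case $\kappa(X) = \kappa(W) \le 0$ --- it equals $0$ in cases $(1)$ and $(2)$, and $-\infty$ in the uniruled case $(3)$ --- which already gives the first assertion of the theorem.

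It remains to convert case $(1)$ into the desired morphism. There $W = \tilde X / G$ is a \emph{smooth} \'etale quotient of a $3$-torus $\tilde X$ by a finite group, hence a $Q$-torus, and I claim it contains no rational curve: the normalization of any rational curve on $W$ would lift through the \'etale cover (as $\P^1$ is simply connected) to a nonconstant map $\P^1 \to \tilde X$, which is impossible since a torus admits no rational curve. Now the $g$-equivariant birational map $X \dashrightarrow W$ has smooth source and target free of rational curves; resolving its indeterminacy, the rational curves introduced over the indeterminacy locus are contracted by the resolved map, and the rigidity lemma then shows it descends to a morphism $X \to W$. Taking $T = W$ furnishes a $g$-equivariant birational morphism onto a $Q$-torus, completing case $(1)$. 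I would note that this rational-curve argument lets the threefold case bypass the equivariant regularization difficulty that makes the general $\kappa(X) = 0$ statement (Theorem~\ref{Thk=0}) delicate.
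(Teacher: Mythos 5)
Your proposal is correct and follows essentially the same route as the paper, whose proof of Theorem~\ref{Thg} is simply ``follows from Theorem~\ref{3k} (and its proof)'': cohomological hyperbolicity forces $d_1(g)\ne d_2(g)$ with positive entropy, which excludes case $(4)$ of Theorem~\ref{3k}, and the remaining cases transfer back to $X$ by birational invariance, with the morphism onto the $Q$-torus obtained exactly as in the paper's Lemma~\ref{Q-t}(2) (a rational map from a smooth variety to a variety without rational curves is a morphism). The only quibble is cosmetic: the log-concavity of $p\mapsto\log d_p(g)$ is Lemma~\ref{Ddeg}(5) rather than \ref{entropy}, and it is not actually needed once one knows $d_0(g)=d_3(g)=1$.
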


We can also determine the fundamental group below.
For the case of $\kappa(X) = 0$ we refer to Namikawa-Steenbrink
\cite[Corollary (1.4)]{NS}.

\begin{theorem}\label{Thp}
Let $X$ be a smooth projective complex threefold with $g \in \Aut(X)$
of primitively positive entropy. Suppose that the Kodaira dimension
$\kappa(X) \ne 0$. Then either $\pi_1(X) = (1)$, or $\pi_1(X) = \Z^{\oplus 2}$.
\end{theorem}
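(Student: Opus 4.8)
The plan is to fix the Kodaira dimension, invoke the structure theory of $\S 3$ to reduce to an explicit model, and then read off $\pi_1$. Since $(X,g)$ has primitively positive entropy, Theorem \ref{Thk}(1) gives $\kappa(X)\le 0$, and with the hypothesis $\kappa(X)\ne 0$ this forces $\kappa(X)=-\infty$. As $X$ is a threefold, Miyaoka--Mori makes $X$ uniruled, so its maximal rationally connected (MRC) fibration has a base $Y$ with $\dim Y\le 2$ and rationally connected general fibre. Because the (topological) fundamental group is a birational invariant of smooth projective varieties, I may replace $X$ by the $g$-equivariant smooth model furnished by Theorem \ref{Thk=-1} without changing $\pi_1(X)$, and assume throughout that the MRC fibration is a genuine $g$-equivariant morphism.

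The next step is to cut the possibilities for $Y$ down to a point or an elliptic curve using primitivity. A base with $\dim Y=2$ is excluded as follows: the MRC fibres are then rational curves, on which any automorphism has trivial relative dynamical degree, so by the product formula for relative dynamical degrees the entropy of $g$ equals that of its descent $g_Y$ to the non-uniruled surface $Y$; positive entropy of $g$ would thus descend to $Y$, contradicting primitivity. If $\dim Y=1$, then $Y$ is a non-uniruled curve, hence of genus $\ge 1$; moreover $g_Y$ must have infinite order, since otherwise a power of $g$ would act fibrewise and exhibit the positive entropy already on a single rational-surface fibre, again contradicting primitivity. This rules out genus $\ge 2$ (where $\Aut(Y)$ is finite) and leaves $Y=E$ an elliptic curve on which $g$ acts as an infinite-order automorphism; after replacing $g$ by a positive power I may assume this action is a non-torsion translation $t=g_E$. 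Together with $\dim Y=0$ this yields exactly two models: $X$ rationally connected, or a $g$-equivariant fibration $\phi:X\to E$ over an elliptic curve with rationally connected general fibre.

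It then remains to compute $\pi_1$. In the rationally connected case $\pi_1(X)=(1)$ by Campana and Koll\'ar--Miyaoka--Mori. In the elliptic case the general fibre $F$ of $\phi$ is rationally connected, hence simply connected, so the fundamental-group sequence for fibrations with rationally connected fibres (Nori, Campana, Koll\'ar) gives an isomorphism $\pi_1(X)\cong\pi_1^{\mathrm{orb}}(E)$, where the orbifold structure on $E$ records the multiplicities of the multiple fibres of $\phi$. The clean conclusion $\pi_1(X)=\Z^{\oplus 2}$ then follows from a dynamical observation: the finite locus in $E$ over which $\phi$ has multiple fibres is preserved by $g_E$, hence by the non-torsion translation $t$; since such a translation has no nonempty finite invariant set, there are no multiple fibres. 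Thus the orbifold structure is trivial and $\pi_1(X)\cong\pi_1(E)=\Z^{\oplus 2}$.

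The main obstacle is the input I am borrowing from $\S 3$: producing a smooth projective $g$-equivariant model on which the MRC fibration is an honest morphism with regular $g$-action, which is precisely the non-uniqueness and non-smoothness difficulty of minimal models flagged in the introduction. Granting that model, the delicate point internal to this argument is verifying the hypotheses of the rationally-connected-fibre fundamental-group theorem on it---in particular that $\phi$ has connected fibres and reduced general fibre after the modification---so that the identification $\pi_1(X)\cong\pi_1^{\mathrm{orb}}(E)$ is legitimate; by contrast the elimination of multiple fibres via the translation argument is the easy and decisive final step.
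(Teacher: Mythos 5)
Your proposal is correct, and its skeleton (reduce to $\kappa(X)=-\infty$ via Theorem \ref{Thk}, run the MRC fibration, split into the rationally connected case and the fibration-over-an-elliptic-curve case) matches the paper, which simply quotes Theorems \ref{Thk} and \ref{3k=-1}. The two places where you genuinely diverge are the sub-arguments. First, to exclude a $2$-dimensional MRC base you invoke the product formula for relative dynamical degrees over a $\P^1$-fibration to force $d_1(g)=d_1(g|Y)$; the paper instead splits by $q(Y)$, killing $q(Y)=1$ by the classification of rigidly parabolic surface pairs (Lemma \ref{ps}) and $q(Y)=2$ by a bespoke intersection computation (Claim 1 in the proof of Theorem \ref{3k=-1}(2)): it restricts the nef eigenvector $v_g$ to the Hirzebruch surface over a curve in the abelian surface base and derives a contradiction via Lemma \ref{DS}. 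Your route is cleaner and uniform in $q(Y)$, at the price of importing the Dinh--Nguyen product formula, which is not in the paper's toolkit (and postdates it); the paper's route stays elementary within Lemmas \ref{PF}, \ref{DS} and \ref{ps}. Second, over the elliptic base you compute $\pi_1(X)\cong\pi_1^{\rm orb}(E)$ and then kill multiple fibres because their (finite, $g$-stable) locus cannot survive an infinite-order translation; the paper instead proves outright that $\alb_X$ is a \emph{smooth} morphism with every fibre a smooth rational surface (Theorem \ref{Thq}(4), via Lemma \ref{smooth}: the discriminant is $g$-stable, and a rigidly parabolic action on an abelian variety stabilizes no proper subvariety) and reads off $\pi_1(X)=\pi_1(\Alb(X))=\Z^{\oplus 2}$ from the homotopy sequence. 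These are really the same dynamical idea, yours applied only to the multiple-fibre locus, the paper's to the whole discriminant; the paper's stronger conclusion is needed elsewhere (the Picard number bound in Theorem \ref{3k=-1}(2)), while for $\pi_1$ alone either suffices --- indeed Graber--Harris--Starr already gives a section of $X\to E$, making the multiple-fibre worry vacuous. The one point you rightly flag as borrowed rather than proved is the existence of a smooth $g$-equivariant model on which $\MRC_X$ is a morphism; that is supplied by the proof of Theorem \ref{Thk=-1} and is legitimate input here.
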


\par \vskip 1pc
For examples $(X, g)$ of positive entropy with $X$ a torus (well
known case), or a rational manifold (take product of rational
surfaces), or a Calabi-Yau manifold, we refer to \cite{Ca01},
\cite{McK3} and \cite{McP2}, and Mazur's example of
multi-degree two hypersurfaces in $\P^1 \times \cdots \times \P^1$ as
in \cite{DS05} Introduction.
See also Remark \ref{remark1.1} below.

\par \vskip 1pc
We define the following sets of dynamical degrees for automorphisms
of positive entropy, where $X$ is $wCY =$ weak Calabi-Yau if
$\kappa(X) = 0 = B_1(X)$, where rat.conn. $=$ rationally connected
is in the sense of \cite{Cp} and \cite{KoMM}, where type (*) = type
(t) (torus), or type (cy) (weak Calabi-Yau), or type (rc) (rational
connected). Note that $D_1^*(n) \subset D_1^*(n)'$.
$$D_1^t(n) := \{\lambda \in \R_{> 1} \ | \
\lambda = d_1(g) \,\, \text{\rm for} \,\, g \in \Aut(X) \,\,
\text{\rm with} \,\, X \,\, \text{\rm an {\it n}-torus} \},$$
$$D_1^{cy}(n) := \{\lambda \in \R_{> 1} \ | \
\lambda = d_1(g) \,\, \text{\rm for} \,\, g \in \Aut(X) \,\,
\text{\rm with} \,\, X \,\, \text{\rm a wCY {\it n}-fold} \},$$
$$D_1^{rc}(n) := \{\lambda \in \R_{> 1} \ | \
\lambda = d_1(g) \,\, \text{\rm for} \,\, g \in \Aut(X) \,\,
\text{\rm with} \,\, X \,\, \text{\rm a rat.conn. {\it n}-fold} \},$$
$$Sa = \{\lambda \in \R_{> 1} \ | \
\lambda \,\,\text{\rm is a Salem number}\}.$$
Denote by $D_1^{*}(n)'$ the set of those $\lambda \in \R_{>1}$ satisfying the following:
there are a type (*) $n$-fold $X$, an ample Cartier integral divisor $H$ on $X$,
a sublattice $L \subset \NS(X)/(\torsion)$ containing $H$ and a $\sigma \in \Hom_{\Z}(L, L)$
which is bijective and preserves the induced multi product form on $L$ such that
$\sigma^*P \equiv \lambda P$ for a nonzero nef $\R$-divisor $P$ in $L \otimes_{\Z} \R$.

\par \vskip 1pc
We conclude the introduction with the result and question below which
suggest a connection between the existence of meaningful dynamics and the
theory of algebraic integers like the dynamical degrees $d_i(g)$.
\par
See McMullen \cite{Mc02} for the realization of some Salem numbers
as dynamical degrees of $K3$ automorphisms.

\begin{theorem}\label{Thsalem}
Let $X$ be a smooth projective complex threefold.
Suppose $g \in \Aut(X)$ is of positive entropy.
Then the first dynamical degree satisfies
$$d_1(g) \,\, \in \,\, Sa \, \cup \,  D_1^t(3) \, \cup \, D_1^{cy}(3) \, \cup \, D_1^{rc}(3).$$
Further, for some $s > 0$,
$$d_1(g^s) \,\, \in \,\, D_1^{rc}(2)' \,\,\cup_{k=2}^{3} \,\,
\{D_1^t(k) \, \cup \, D_1^{cy}(k) \, \cup \, D_1^{rc}(k)\}.$$
\end{theorem}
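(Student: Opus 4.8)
The plan is to read both containments off Theorem \ref{3k}, using two invariance properties of the first dynamical degree recorded in Convention \ref{entropy}: that $d_1$ is unchanged under the birational equivariance relating $(X',g)$ to its model $(X,g)$, and that it is unchanged under a finite \'etale cover, since the pullback on $H^{1,1}$ is injective and intertwines the two actions, and the first dynamical degree, being the spectral radius there, is thereby preserved. For the first assertion I would apply Theorem \ref{3k} to produce $(X,g)$ birationally equivariant to $(X',g)$ in one of its four cases, so that $d_1(g)=d_1(g\mid X)$. Case (4) gives $d_1(g)\in Sa$ at once; in cases (2) and (3) the threefold $X$ is itself wCY, respectively rationally connected, and carries $g$, whence $d_1(g)\in D_1^{cy}(3)$, respectively $D_1^{rc}(3)$; in case (1) I would lift $g$ through the $g$-equivariant \'etale Galois cover to $\tilde g\in\Aut(\tilde X)$ on the $3$-torus $\tilde X$ and use $d_1(g)=d_1(\tilde g)\in D_1^t(3)$. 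The union of these is the first displayed set.

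For the second assertion I would first invoke multiplicativity $d_1(g^s)=d_1(g)^s$, so that in cases (1)--(3) the \emph{same} manifold and type already realize $d_1(g^s)$ in dimension three. The real content is to remove the Salem alternative, that is, to give a geometric realization in case (4), where Theorem \ref{Thk}(2) only leaves $\kappa(X)>0$ as a possibility; here I would descend after replacing $g$ by a suitable power. First, positive entropy excludes $\kappa(X)=3$ (general type has finite $\Aut$) and $\kappa(X)=2$ (the Iitaka base is then a surface of general type with finite automorphisms and the fibres are elliptic curves, so no entropy can be supported), leaving $\kappa(X)\in\{1,0,-\infty\}$.

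When $\kappa(X)=1$ I would make the Iitaka fibration onto a curve $C$ equivariant under a power $g^s$; since the base automorphism has finite order, a further power fixes a single smooth fibre $F$ with $\kappa(F)=0$ and restricts to an automorphism there, and by the behaviour of $d_1$ along the fibration $d_1(g^s)=d_1(g^s\mid F)$. According as $F$ is of torus type or of K3/Enriques type (lifting an Enriques or bielliptic $F$ to its K3 or torus cover as above) this places $d_1(g^s)$ in $D_1^t(2)$ or in $D_1^{cy}(2)$. When $\kappa(X)=0$ the structure theorem (Theorem \ref{Thk=0}) again exhibits $X$ as torus type or wCY, giving $D_1^t(3)$ or $D_1^{cy}(3)$. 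When $\kappa(X)=-\infty$ the manifold is uniruled (Theorem \ref{Thk=-1}), and I would use its maximal rationally connected fibration $X\to Y$: the first dynamical degree is then carried either by the non-uniruled base $Y$ (of dimension $1$ or $2$ and Kodaira dimension $\ge 0$), which is treated in lower dimension exactly as in the $\kappa\ge 0$ cases, or by the rationally connected fibres, which leads to the rational-surface analysis of the next paragraph.

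The rationally connected case is where the primed set $D_1^{rc}(2)'$ enters and where I expect the main obstacle. After replacing $g$ by a power I would run a $g$-equivariant minimal model program on the rationally connected model and terminate at a Mori fibre space $X'\to B$ with $\dim B\in\{0,1,2\}$. The three possibilities give respectively: a Fano threefold, so $d_1(g^s)\in D_1^{rc}(3)$; a del Pezzo fibration over a curve, where a power fixes a fibre and the degree is realized by an honest rational-surface automorphism, so $d_1(g^s)\in D_1^{rc}(2)$; and a conic bundle over a (possibly singular) rational surface $S$, where the conic fibres carry no entropy and the degree is inherited by the base only through the induced lattice automorphism $\sigma$ of $\NS(S)/(\torsion)$, so that $d_1(g^s)\in D_1^{rc}(2)'$. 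The delicate points I would have to control are: (i) making the whole program and the terminal fibration simultaneously equivariant after a single power; (ii) proving, via the behaviour of $d_1$ along the fibration, that in the conic-bundle case the degree equals the spectral radius of $\sigma$ and is not absorbed by the fibres; and (iii) verifying on the singular $S$ that $\sigma$ preserves the required multi-product form and that the eigenvalue $d_1(g^s)$ is attained on a nef class $P$, so that the condition defining $D_1^{rc}(2)'$ is genuinely met. Assembling the three ranges of $\dim B$ with the $\kappa(X)\in\{0,1\}$ analysis yields the second displayed containment.
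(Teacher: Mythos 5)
Your treatment of the first displayed containment is fine and is essentially the paper's route: Theorem \ref{3k} plus the birational-equivariance and \'etale-cover invariance of $d_1$ (Lemma \ref{quot}) gives the four memberships case by case. The problem is in the second containment, where you have misidentified the one situation that actually forces the primed set $D_1^{rc}(2)'$ into the statement, and your proposed mechanism for it would fail there. The case you must handle is Theorem \ref{3k=-1}(2): $\kappa(X)=-\infty$, $q(X)=1$, and $\alb_X\colon X\to E=\Alb(X)$ is a smooth fibration whose fibres $F$ are rational surfaces, with $g\mid E$ a translation of \emph{infinite} order (Lemma \ref{Abel}; this is forced because $g\mid E$ is rigidly parabolic). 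Consequently \emph{no} power of $g$ stabilizes any fibre, so there is no ``honest rational-surface automorphism'' realizing $d_1(g^s)$; your del Pezzo-fibration alternative (``a power fixes a fibre'') is exactly what cannot happen here, and your conic-bundle alternative does not arise: if a $g$-equivariant conic bundle $X'\to S$ existed, $g$ would descend to a genuine automorphism of $S$ (so no primed set would be needed), and $S$ would be an irrational ruled surface with $q(S)=1$, which carries no positive-entropy automorphism, contradicting $d_1(g)>1$. Your attribution of $D_1^{rc}(2)'$ to ``the induced lattice automorphism of $\NS(S)$ of the conic-bundle base'' is therefore aimed at the wrong object.

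What the paper does instead (see the proof of Lemma \ref{salem}(2), which is exactly the construction packaged into the definition of $D_1^{*}(2)'$): although $g$ does not stabilize the fibre $F$, it does fix the class of $F$ in cohomology, so $g^*$ induces a well-defined isometry $\sigma$ of the sublattice $L=(\NS(X)\mid F)/(\torsion)\subset \NS(F)/(\torsion)$ with respect to the form $\langle D_1\mid F, D_2\mid F\rangle = D_1.D_2.F$; the lattice $L$ contains the restriction of an ample divisor of $X$, and the nonzero nef class $v_g\mid F$ satisfies $\sigma^*(v_g\mid F)=d_1(g)(v_g\mid F)$. That is verbatim the membership condition for $D_1^{rc}(2)'$, with $s=1$. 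Two smaller points: your claim that case (4) of Theorem \ref{3k} ``only leaves $\kappa(X)>0$'' is a misreading of Theorem \ref{Thk}(2) (case (4) also covers imprimitive pairs and the fibred case just described, with any $\kappa\le 1$), though your subsequent trichotomy $\kappa\in\{1,0,-\infty\}$ recovers from this; and the entire $g$-equivariant MMP/Mori-fibre-space machinery is unnecessary, since for a rationally connected threefold the term $D_1^{rc}(3)$ already appears in the union with $k=3$ and $s=1$. The imprimitive cases you treat via restriction to a fixed fibre are handled in the paper by Lemma \ref{3nef}, and your argument there is consistent with it.
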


The question below has a positive answer in dimension 2;
see \cite{Ca99}.

\begin{question}\label{question}
Let $X$ be a projective complex manifold of dimension $n \ge
2$ and $g \in \Aut(X)$ of primitively positive entropy. Does the first dynamical
degree $d_1(g)$ satisfy the following
$$d_1(g) \,\, \in \, \, \cup_{k=2}^{n} \, \,
\{D_1^t(k) \, \cup \, D_1^{cy}(k) \, \cup D_1^{rc}(k)\} \, ?$$
\end{question}

\begin{remark}\label{remark1.1}
Take the elliptic curve $E_i := \C/(\Z + \Z\sqrt{-1})$.
Following Igusa's construction (see Oguiso - Sakurai \cite[(2.17)]{OS},
or Ueno \cite{Ue}), one can construct free action of
$\Gamma : = (\Z/(2))^{\oplus 2}$ on the abelian
variety $A := E_i \times E_i \times E_i$ such that
$X : = A/\Gamma$ is a smooth Calabi-Yau variety with $K_X \sim 0$.
The group $\SL_3(\Z)$ acts on $A$, and
as observed in \cite[(4.5)]{DS04}, it contains a free abelian subgroup $G$
of rank $2$ such that the action on $A$ by each $\id \ne g \in G$ is of positive entropy.
If we can find such a $g$ normalizing $\Gamma$, then $g | A$ descends to
a $\bar{g} \in \Aut(X)$ of positive entropy.
\end{remark}

\begin{remark}
Like \cite{Og03} - \cite{Og06}, \cite{KOZ}, \cite{NZ2},
\cite{Zh} and \cite{CH},
our approach is algebro-geometric in nature.
\end{remark}

\par \vskip 1pc \noindent
{\bf Acknowledgement.}
\par
I would like to thank

Tien-Cuong Dinh and Nessim Sibony for the explanation of the paper \cite{DS04}
and the reference \cite{Di},
Noboru Nakayama for answering questions on nef and big divisors, Keiji
Oguiso for sending me the papers \cite{Og03}-\cite{Og06} on dynamics
of automorphism groups on projective varieties, Eckart Viehweg for patiently
answering my persistent questions on moduli spaces and isotrivial
families, Kang Zuo for the explanation on families of Calabi-Yau
manifolds, and the referee for the suggestions for the improvement of the paper.
\par
I also like to thank the Max Planck Institute for Mathematics at Bonn
for the warm hospitality in the first quarter of the year 2007.
This project is supported by an Academic Research Fund of NUS.

\section{Preliminary results}

In this section we recall definitions and prove some lemmas.
\par
Results like Lemmas \ref{ps} and \ref{nefbig}
are not so essentially used by the paper, but are hopefully of independent
interest.

\begin{setup} {\bf Conventions and remarks}
\label{entropy}
\end{setup}

(1) For a linear transformation $T : V \rightarrow V$ of a vector space
$V$, let $\rho(T)$ be the {\it spectral radius} of $T$, i.e., the
largest modulus of eigenvalues of $T$.

\par
(2) We shall use the terminology and notation in
Kawamata-Matsuda-Matsuki \cite{KMM} and
Koll\'ar - Mori \cite{KM}. Most of the divisors are $\R$-Cartier
divisors: $\sum_{i=1}^s r_i D_i$ with $r_i \in \R$ and $D_i$ Cartier
prime divisor.
\par
Let $X$ be a projective  manifold. Set $H^*(X, \C) =
\oplus_{i=0}^{2n} H^i(X, \C)$. There is the Hodge decomposition:
$$H^k(X, \C) = \oplus_{i+j=k} H^{i,j}(X, \C).$$
Denote by $H^{i,i}_a(X, \C)$ the subspace of $H^{i,i}(X, \C)$
spanned by the algebraic
subvarieties of complex codimension $i$.

\par
(3) Let $\Pic(X)$ be the {\it Picard group}, and $\NS(X) =
\Pic(X)/($algebraic equivalence) $= H^1(X, \OO_X^*)/\Pic^0(X)
\subseteq H^2(X, \Z)$ the {\it Neron-Severi group}. $\NS(X)$ is a
finitely generated abelian group whose rank is the {\it Picard
number}.
\par
Set $\NS_{\B}(X) = \NS(X) \otimes_{\Z} \B \subset H^2(X, \B)$ for
$\B = \Q$ and $\R$. Let $\Nef(X)$ be the closed cone of nef divisors
in $\NS_{\R}(X)$. So $\Nef(X)$ is the closure of the ample cone.
Also $\Nef(X) \subseteq \overline{\KK(X)}$, the closure of the
K\"ahler cone $\KK(X)$.
\par
Let $N_1(X)$ be the $\R$-space generated by algebraic $1$-cycles
modulo numerical equivalence; see \cite{KM} (1.16). When $X$ is a
surface, $N_1(X) = \NS_{\R}(X)$.

\par
(4) Let $g \in \Aut(X)$. Denote by $\rho(g)$ the {\it spectral
radius} of $g^*| H^*(X, \C)$. It is known that either $\rho(g) > 1$,
or $\rho(g) = 1$ and all eigenvalues of $g^* | H^*(X, \C)$ are of
modulus $1$. When $\log \rho(g) > 0$ (resp. $\log \rho(g) = 0$) we
say that $g$ is of {\it positive entropy} (resp. {\it null
entropy}).

\par
We refer to Gromov \cite{Gr}, Yomdin \cite{Yo}, Friedland
\cite{Fr95}, and Dinh - Sibony \cite{DS05} page 302, for the
definition of the $i$-th {\it dynamical degree} $d_i(g)$ for $1 \le
i \le n = \dim X$ (note that $d_n(g) = 1$ now and set $d_0(g) = 1$)
and the actual definition of the {\it topological entropy} $h(g)$
which turns out to be $\log \rho(g)$ in the setting of our paper.

\par
(5) Let $Y$ be a projective variety and $g \in \Aut(Y)$. We say that
$g$ is of {\it positive entropy}, or {\it null entropy}, or {\it
parabolic}, or {\it periodic}, or {\it rigidly parabolic}, or of
{\it primitively positive entropy} (see the definitions below), if
so is $g \in \Aut(\tilde{Y})$ for one (and hence all) $g$-equivariant resolutions
as guaranteed by
Hironaka \cite{Hi}. The definitions do not depend on the choice of
$\tilde{Y}$ because every two $g$-equivariant resolutions are
birationally dominated by a third one, by the work of Abramovich -
Karu - Matsuki - Wlodarczyk; see Matsuki \cite{Mk} (5-2-1); see also Lemma \ref{quot}.

\par
(6) We use $g | Y$ to signify that $g \in \Aut(Y)$.

\par
(7) In this paper, by a {\it pair} $(Y, g)$ we mean a projective
variety $Y$ and an automorphism $g \in \Aut(Y)$. Two pairs $(Y', g)$
and $(Y'', g)$ are $g$-{\it equivariantly birational}, if there is
a birational map $\sigma : Y' \cdots \to Y''$ such that
the birational action $\sigma (g|Y') \sigma^{-1} : Y'' \cdots \to Y''$
extends to a biregular action $g | Y''$.

\par
(8) $g \in \Aut(Y)$ is {\it periodic} if the order $\ord(g)$ is
finite. $g$ is {\it parabolic} if $\ord(g) = \infty$ and $g$ is
of null entropy.

\par
(9) $(Y', g)$ is {\it rigidly parabolic} if ($g | Y'$ is parabolic
and) for every pair $(Y, g)$ which is $g$-equivariantly birational
to $(Y', g)$ and for every $g$-equivariant surjective morphism $Y
\rightarrow Z$ with $\dim Z > 0$, we have $g | Z$ parabolic.

\par
(10) Let $Y'$ be a projective variety and $g \in \Aut(Y')$ of
positive entropy (so $\dim Y' \ge 2$). A pair $(Y', g)$ is of {\it
primitively positive entropy} if it is not of imprimitive positive
entropy, while a pair $(Y', g)$ is of {\it imprimitively positive
entropy} if it is $g$-equivariantly birational to a pair $(Y, g)$
and if there is a $g$-equivariant surjective morphism $f: Y
\rightarrow Z$ such that either one of the two cases below occurs.

\begin{itemize}
\item[(10a)]
$0 < \dim Z < \dim Y$, and $g | Z$ is still of positive entropy.
\item[(10b)]
$0 < \dim Z < \dim Y$, and $g | Z$ is periodic.
\end{itemize}

\par
(11) {\bf Remark.} We observe that in Case(10b), for some $s
> 0$ we have $g^s | Z = \id$ and that $g^s$ acts faithfully on the
general fibre $Y_z$ of $Y \rightarrow Z$, such that $g^s | Y_z$ is
of positive entropy. To see it, we replace $g^s$ by $g$ for
simplicity. In view of Lemma \ref{quot}, we may assume that $Y_z$ is
connected by making use of the Stein factorization, and also that
both $Y$ and $Z$ are smooth after $g$-equivariant resolutions as in
Hironaka \cite{Hi}. Let $0 \ne v_g \in \Nef(Y)$ be a nef divisor as
in Lemma \ref{PF} such that $g^* v_g = d_1(g) v_g$ with $d_1(g) : = d_1(g|Y) >
1$.
\par
We claim that $v_g | Y_z \ne 0$. Indeed, take very ample divisors
$H_1, \dots$, $H_k$ with $k = \dim Z$. Then $f^*H_1 \dots f^*H_k = c
Y_z$ homologously  with $c = (H_1 \dots H_k) > 0$. Noting that
$f^*H_1 \dots f^*H_i \ne 0$ and $g^*(f^*H_j) = f^*(g^*H_j) = f^*H_j$
and applying Lemma \ref{DS} repeatedly, we get $f^*H_1$ $\dots
f^*H_i . v_g \ne 0$ for all $i \le k$. In particular, $0 \ne f^*H_1
\dots f^*H_k . v_g = c Y_z . v_g = c v_g | Y_z$ homologously;
see Fulton \cite{Fu} (8.3) for the last equality.
This claim is proved.
\par
Next we claim that $d_{k+1}(g) := d_{k+1}(g|Y) \ge d_1(g | Y_z) \ge d_1(g)$
with $k = \dim Z$, so that $g | Y_z$ is of positive entropy. Indeed,
$g^*(v_g | Y_z) = d_1(g) (v_g | Y_z)$ implies that $d_1(g | Y_z) \ge
d_1(g)$. 
By Lemma \ref{PF}, $g^*
(v_{(g|Y_z)}) = (g | Y_z)^* v_{(g | Y_z)} = d_1(g | Y_z) v_{(g |
Y_z)}$, so $d_{k+1}(g) \ge d_1(g | Y_z)$.
\par
In \cite[Appendix, Theorem D]{NZ},
we will show that $d_1(g^s|Y) = d_1(g^s|Y_z)$
for the $s > 0$ as given at the beginning of this Remark.

\par \vskip 1pc

(12) {\bf Remark.} By the observation above and Lemma
\ref{Ddeg}, if $\dim Y \le 2$ and if the pair $(Y, g)$ is of
positive entropy, then $\dim Y = 2$ and the pair $(Y, g)$ is always
of primitively positive entropy.

\par \vskip 1pc
(13) We refer to Iitaka \cite{Ii82}  for the definition of $D$-{\it
dimension} $\kappa(X, D)$; the {\it Kodaira dimension} $\kappa(X) =
\kappa(\tilde{X}) = \kappa(\tilde{X}, K_{\tilde X})$ with $\tilde{X}
\rightarrow X$ a projective resolution; and the {\it Iitake fibring}
(of $X$): $X' \rightarrow Y'$ with $X'$ birational to $X$, both $X'$
and $Y'$ smooth projective, $\dim Y' = \kappa(X)$ ($= \kappa(X')$)
and $\kappa(X_y') = 0$ for a general fibre $X_y'$ over $Y'$. Note
that $\kappa(X)$ attains one of the values: $-\infty$, $0, 1, \,
\dots, \dim X$. We say that $X$ is of {\it general type} if
$\kappa(X) = \dim X$.

\par \vskip 1pc
(14) {\bf Remark.} The Iitaka fibring is defined by the
pluri-canonical system $|rK_X|$ for $r >> 0$ after $g$-equivariant
blowup to resolve base points in the system; see Hironaka \cite{Hi}.
So we can always replace $(X, g)$ by its $g$-equivariant blowup
$(X', g)$ such that there is a $g$-equivariant Iitaka fibring $X'
\rightarrow Y'$ with projective manifolds $X'$ and $Y'$ and with $\dim
Y' = \kappa(X') = \kappa(X)$. Note that $\kappa(X)$ is a birational
invariant.

\par \vskip 1pc
(15) A projective manifold $X$ of dimension $n$ is {\it
uniruled} if there is a dominant rational map $\P^1 \times Y \cdots
\rightarrow X$, where $Y$ is a projective manifold of dimension
$n-1$.

\par
(16) A projective manifold $X$ is a $Q$-{\it torus} in the sense of
\cite{Na99} if
there is a finite \'etale cover $T \to X$ from a torus $T$.
\par
(17) A projective manifold is a {\it weak Calabi-Yau manifold}
(or {\it wCY} for short) if the Kodaira dimension $\kappa(X) = 0$ and
if the {\it irregularity} $q(X) = h^1(X, \OO_X) = 0$.
A normal projective variety $\bar{X}$ with only terminal singularity
is a {\it Calabi-Yau variety} if the canonical divisor $K_{\bar X}$
satisfies $s K_{\bar X} \sim 0$ for some integer $s > 0$
and if $q({\bar X}) = 0$. So a projective resolution $X$ of
a Calabi-Yau variety $\bar{X}$ is a weak Calabi-Yau manifold.
Conversely, assuming the minimal model program,
every weak Calabi-Yau manifold is birational to
a Calabi-Yau variety.
We refer to \cite{KMM} or \cite{KM} for the definition of
singularity of type: {\it terminal, canonical, log terminal}, {\it or rational}.
\par
(18) An algebraic integer $\lambda > 1$ of degree $2(r+1)$ over $\Q$
with $r \ge 0$, is a {\it Salem} number (see \cite{Sa} or
\cite{McK3} \S 3) if all conjugates of $\lambda$ over $\Q$ (including $\lambda$ itself) are
given as follows, where $|\alpha_i| = 1$:
$$\lambda, \, \lambda^{-1}, \, \alpha_1, \, \bar{\alpha}_1, \,
\dots, \, \alpha_r, \, \bar{\alpha}_r.$$

\par \vskip 1pc
The following result is fundamentally important in the study of
complex dynamics. For the proof,
we refer the readers to \cite{Gr}, \cite{Yo}, \cite{Fr95},
\cite{DS05} (2.5) and page 302,
\cite{Di} Proposition 5.7,
\cite{DS} before (1.4), \cite{Fr06} the Introduction,
\cite{Gu03} (1.2), (1.5), (1.6).

\begin{lemma}\label{Ddeg}
Let $X$ be a projective manifold of dimension $n$,
and $g \in \Aut(X)$.
Then the following are true.
\begin{itemize}
\item[(1)]
$d_i(g) = \rho(g^* | H^{i,i}(X, \R)) = \rho(g^* | H_a^{i,i}(X, \R))$,
$1 \le i \le n$.
\item[(2)]
$d_1(g) = \rho(g^* | \NS_{\R}(X))$ $($see also \cite{DS05} $(3.5))$.
\item[(3)]
$h(g) = \log \rho(g) = \max_{1 \le i \le n} \log d_i(g)$.
\item[(4)]
The entropy $h(g) > 0$ holds if and only if the dynamical degree
$d_{\ell}(g)
> 1$ for one $($and hence for all$)$ $1 \le \ell \le n-1$ $($by
$(5)$$)$.
\item[(5)]
The map $\ell \mapsto d_{\ell}(g)/d_{\ell + 1}(g)$ with $0 \le \ell
\le n-1$, is non-decreasing. So $d_{\ell}(g) \le d_1(g)^{\ell}$ and
$d_{n - \ell}(g) \le d_{n-1}^{\ell}(g)$ for all $1 \le \ell \le n$.
Also there are integers $m, m'$ such that:
$$1 = d_0(g) < d_1(g) < \cdots < d_m(g) = \cdots = d_{m'}(g) > \cdots > d_n(g) = 1.$$
\end{itemize}
\end{lemma}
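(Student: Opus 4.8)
The plan is to treat this statement as a packaging of several known analytic theorems together with elementary consequences, and to organize the six assertions so that the deep input is isolated. First I would observe that (2) is just the case $i=1$ of (1): on a projective manifold the Lefschetz $(1,1)$-theorem identifies $H^{1,1}_a(X,\R)$ with $\NS_\R(X)$, and $g^*$ preserves this subspace, so $\rho(g^*|\NS_\R(X)) = \rho(g^*|H^{1,1}_a(X,\R)) = d_1(g)$. Likewise I would note that (4), the two inequalities, and the unimodal ordering in (5) are purely formal consequences of (3) together with the single log-concavity estimate $d_i(g)^2 \ge d_{i-1}(g)\,d_{i+1}(g)$. Thus the whole lemma reduces to proving (1), (3), and this log-concavity inequality.

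For (1) and (3) I would invoke the analytic theory. Fix a Kähler form $\omega$ on $X$. Following Dinh--Sibony \cite{DS05}, the $i$-th dynamical degree is the growth rate $d_i(g) = \lim_m \left(\int_X (g^m)^*\omega^i \wedge \omega^{n-i}\right)^{1/m}$. Since $g$ is an automorphism, $g^*$ respects the Hodge decomposition and the cup product, so this integral equals the cohomological pairing $\langle (g^*)^m[\omega]^i, [\omega]^{n-i}\rangle$ on $H^{i,i}(X,\R)$, and its growth rate is $\rho(g^*|H^{i,i}(X,\R))$ once one knows that the leading eigenclass pairs nontrivially against $[\omega]^{n-i}$. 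That nontriviality is supplied by a Perron--Frobenius argument in the nef cone: the eigenclass for the top eigenvalue is represented by a nonzero positive closed $(i,i)$-current, hence pairs positively with the Kähler class. To descend from $H^{i,i}$ to the algebraic part $H^{i,i}_a$ I would use that $g^*$ preserves $H^{i,i}_a(X,\R)$ and that, on a projective $X$, the leading nef eigenclass already lies in (or has its eigenvalue attained on) this algebraic subspace. For (3), Gromov's upper bound \cite{Gr} and Yomdin's lower bound \cite{Yo} give $h(g) = \log\rho(g^*|H^*(X,\C))$, the overall spectral radius is attained on one of the diagonal pieces $H^{i,i}$, and combined with $d_0(g) = d_n(g) = 1$ this yields $h(g) = \max_i \log d_i(g)$.

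The technical heart, and the step I expect to be the main obstacle, is the log-concavity $d_i(g)^2 \ge d_{i-1}(g)\,d_{i+1}(g)$ underlying (5). I would derive it from the mixed Hodge--Riemann bilinear relations (Khovanskii--Teissier type inequalities) applied to the nef classes $(g^m)^*[\omega]$ and $[\omega]$: the mixed intersection numbers $\int_X (g^m)^*\omega^i \wedge \omega^{n-i}$ are log-concave in $i$, and this inequality is preserved on taking $m$-th roots and letting $m \to \infty$. Granting this, everything in (5) follows by elementary manipulation of the concave sequence $i \mapsto \log d_i(g)$ anchored at $\log d_0 = \log d_n = 0$: the inequality rewrites as $d_i/d_{i+1} \ge d_{i-1}/d_i$, i.e.\ $\ell \mapsto d_\ell/d_{\ell+1}$ is non-decreasing; telescoping the non-increasing first differences from each end gives $d_\ell \le d_1^\ell$ and $d_{n-\ell} \le d_{n-1}^\ell$; and a concave sequence with equal endpoints is unimodal, giving the displayed ordering. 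Finally (4) falls out by feeding positivity back through concavity: if $h(g) > 0$ then some $d_i(g) > 1$, which by $d_0 = d_n = 1$ must be an interior index, and concavity of $\log d_\bullet$ with zero endpoints then forces $d_\ell(g) > 1$ for every $1 \le \ell \le n-1$.

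I would emphasize that the genuinely deep inputs here are external: the Gromov--Yomdin entropy formula, Dinh--Sibony's current-theoretic construction of the dynamical degrees, and the log-concavity via mixed positivity. Once those are cited, parts (2), the second equality in (1) and (3), (4), and the ordering consequences of (5) are routine. The only place demanding extra care within the projective setting is the passage to the algebraic parts $H^{i,i}_a$ in (1) and (2), which hinges on the nefness, hence the algebro-geometric positivity, of the Perron--Frobenius eigenclass.
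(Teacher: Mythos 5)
Your proposal is correct, and it is consistent with the paper, which in fact gives no proof of this lemma at all: it simply refers the reader to Gromov, Yomdin, Friedland, Dinh--Sibony and Guedj for exactly the inputs you isolate (the Gromov--Yomdin entropy formula, the current-theoretic definition and cohomological computation of the dynamical degrees, and the log-concavity $d_i(g)^2 \ge d_{i-1}(g)d_{i+1}(g)$ from mixed positivity). Your reduction of (2), (4) and the ordering statements in (5) to those cited facts, and the passage to $H^{i,i}_a$ via the effective/nef cone and an ample class, is a faithful reconstruction of the standard arguments the paper is invoking.
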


The following very useful result is proved in Dinh-Sibony
\cite{DS04} (3.2), (4.4).

\begin{lemma}\label{DS}
Let $X$ be a projective  manifold of dimension $n$.
Let $\Nef(X) \ni P$, $P'$, $P_i$ $(1 \le i \le m; \, m \le n-2)$ be nef divisors.
Then we have:
\begin{itemize}
\item[(1)]
Suppose that $P_1 . P_2 = 0$ in $H^{2,2}(X, \R)$. Then one of $P_1,
P_2$ is a multiple of the other.
\item[(2)]
We have $P_1 \cdots P_m . P . P' \ne 0 \in H^{m+2, m+2}(X, \R)$ if
the two conditions below are satisfied.

\begin{itemize}
\item[(2a)]
$P_1 \dots P_m . P \ne 0$ and $P_1 \dots P_m . P' \ne 0$.
\item[(2b)]
One has $g^*(P_1 \dots P_m . P) = \lambda (P_1 \dots P_m . P)$ and
$g^*(P_1 \dots P_m . P') = \lambda' (P_1 \dots P_m . P)$, for some
$g \in \Aut(X)$ and distinct (positive) real numbers $\lambda$ and
$\lambda'$.
\end{itemize}

\end{itemize}
\end{lemma}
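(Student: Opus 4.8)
The plan is to treat the two parts separately, with part (1) a Hodge-index argument and part (2) its upgrade to a \emph{mixed} form, the eigenvalue data supplying exactly the non-degeneracy input that (1) does not need.

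\emph{Part (1).} First I would fix a K\"ahler (say ample) class $\omega$ and consider the quadratic form $Q(\alpha)=\alpha^2\cdot\omega^{n-2}$ on $H^{1,1}(X,\R)$. By the Hodge--Riemann bilinear relations on the compact K\"ahler manifold $X$, $Q$ has signature $(1,h^{1,1}-1)$, i.e. it is Lorentzian: $\omega$ is timelike and every primitive class is spacelike. Since an ample class $\alpha$ satisfies $Q(\alpha)=\int_X\alpha^2\wedge\omega^{n-2}>0$ and the ample cone is connected and contains $\omega$, the ample cone lies in the open forward cone of $Q$; taking closures, every nef class lies in the closed forward cone. The hypothesis $P_1\cdot P_2=0$ in $H^{2,2}(X,\R)$ gives in particular $Q(P_1,P_2)=P_1\cdot P_2\cdot\omega^{n-2}=0$. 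If $P_1=0$ or $P_2=0$ the conclusion is trivial, so assume both are nonzero forward-cone vectors that are $Q$-orthogonal. The reverse Cauchy--Schwarz inequality valid for causal vectors of a Lorentzian form forces $Q(P_1)Q(P_2)\le Q(P_1,P_2)^2=0$, hence (say) $Q(P_1)=0$, so $P_1$ is null; but a nonzero null forward vector in a Lorentzian space is $Q$-orthogonal, within the closed forward cone, only to the vectors of its own ray, so $P_2\in\R_{\ge 0}P_1$. This proves (1).

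\emph{Part (2).} Set $a=P_1\cdots P_m\cdot P$ and $b=P_1\cdots P_m\cdot P'$; by (2a) both are nonzero nef classes in $H^{m+1,m+1}(X,\R)$, and the desired conclusion is $a\cdot P'=b\cdot P\ne 0$ in $H^{m+2,m+2}(X,\R)$. The eigenvalue hypothesis (2b) reads $g^*a=\lambda a$ and $g^*b=\lambda' b$ with $\lambda\ne\lambda'$; since $a,b\ne 0$ this immediately gives that $a$ and $b$ are linearly independent, for a relation $a=tb$ would force $\lambda=\lambda'$. I would then repeat the Lorentzian argument of part (1) for the mixed form $B(\alpha,\beta)=P_1\cdots P_m\cdot\omega^{n-m-2}\cdot\alpha\cdot\beta$ on $H^{1,1}(X,\R)$. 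The key input is the mixed Hodge--Riemann (Khovanskii--Teissier) theorem: when the polarizing classes $P_1,\dots,P_m,\omega$ are K\"ahler, $B$ is again Lorentzian, so the reverse Cauchy--Schwarz $B(P,P')^2\ge B(P,P)B(P',P')$ holds, and by continuity the inequality persists for the merely nef classes $P_i$. Assuming for contradiction that $a\cdot P'=0$ and wedging with $\omega^{n-m-2}$ gives $B(P,P')=0$; the equality analysis of the Lorentzian form then forces $P$ and $P'$ to be $B$-proportional, hence $a=P_1\cdots P_m\cdot P$ and $b=P_1\cdots P_m\cdot P'$ to be proportional, contradicting their independence. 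Therefore $a\cdot P'\ne 0$.

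\emph{Where I expect the difficulty.} The delicate point is the equality-case and degeneracy analysis in part (2): the polarizing classes $P_i$ are only nef, so the mixed form $B$ may degenerate and one cannot quote a clean nondegenerate Lorentzian structure directly. The natural fix is to perturb $P_i\rightsquigarrow P_i+\varepsilon\omega$ to make them ample, run the nondegenerate argument, and let $\varepsilon\to 0$; the obstacle is that this perturbation destroys the $g^*$-eigenvector property of $a$ and $b$, which is exactly the ingredient used to exclude proportionality. Reconciling these two requirements -- retaining the eigenvalue-based independence of $a$ and $b$ while passing to the K\"ahler regime in which the Hodge-index signature is available -- is the crux, and is precisely the point at which the full strength of the mixed Hodge--Riemann bilinear relations for nef classes (as developed by Dinh--Sibony and Dinh--Nguy\^en) has to be invoked rather than the bare reverse Cauchy--Schwarz inequality.
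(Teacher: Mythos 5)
First, a point of comparison: the paper does not prove this lemma at all --- it is quoted verbatim from Dinh--Sibony \cite{DS04} (3.2), (4.4), so there is no in-paper argument to measure yours against; the right yardstick is the proof in that reference, which is indeed the mixed Hodge--Riemann / Lorentzian-cone argument you outline. Your part (1) is correct and complete: since the polarizing classes are all equal to the ample $\omega$, the form $Q(\alpha,\beta)=\alpha\cdot\beta\cdot\omega^{n-2}$ is genuinely nondegenerate of signature $(1,h^{1,1}-1)$, the nef cone sits in the closed forward cone, and the equality case of the reverse Cauchy--Schwarz inequality for a null forward vector does force proportionality along a ray. (You in fact prove the stronger statement with hypothesis $P_1\cdot P_2\cdot\omega^{n-2}=0$, which is fine.)

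Part (2), however, has a genuine gap, and it is essentially the one you flag yourself: once the polarizing classes $P_1,\dots,P_m$ are merely nef, the form $B$ can be degenerate (e.g.\ $P_1$ a fibre class of a fibration lies in its kernel), the reverse Cauchy--Schwarz inequality survives the limit from the ample case but its equality-case analysis does not, and the ample perturbation $P_i\rightsquigarrow P_i+\varepsilon\omega$ destroys the $g^*$-eigenvector structure. I would add that the difficulty is slightly deeper than you state. Even the correct degenerate version of the equality case --- which is precisely Dinh--Sibony's Th\'eor\`eme 3.1 / Corollaire 3.2 --- does not yield proportionality of $P$ and $P'$ in $H^{1,1}(X,\R)$, nor of $a=P_1\cdots P_m\cdot P$ and $b=P_1\cdots P_m\cdot P'$ in $H^{m+1,m+1}(X,\R)$; it yields proportionality of $P\cdot\Omega$ and $P'\cdot\Omega$ in $H^{n-1,n-1}(X,\R)$, where $\Omega=P_1\cdots P_m\cdot\omega^{n-m-2}$. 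To contradict hypothesis (2b) you need proportionality of $a$ and $b$ themselves, and you cannot pass from one statement to the other for free: cup product with $\omega^{n-m-2}$ on $H^{m+1,m+1}$ goes past the middle dimension and need not be injective, and $\omega$ is not $g^*$-invariant, so applying $g^*$ to the relation does not rescue it either. So the crux you identify is real, your proposal as written does not close it, and closing it is exactly the content of the results of \cite{DS04} that the paper cites in place of a proof.
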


We refer to Dinh-Sibony \cite{DS05} (3.5) for a result including
the one below and with more analytical information.

\begin{lemma} (Generalized Perron-Frobenius Theorem) \label{PF}
Let $X$ be a projective manifold and $g \in \Aut(X)$. Then
there are non-zero nef divisors $v_g$ and $v_{g^{-1}}$ in $\Nef(X)$
such that:
$$g^* v_g = d_1(g) v_g, \hskip 1pc
(g^{-1})^* v_{g^{-1}} = d_1(g^{-1}) v_{g^{-1}}.$$
\end{lemma}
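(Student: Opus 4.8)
The plan is to recognize the statement as a Perron--Frobenius theorem for the closed convex cone $\Nef(X)$ acted on by the linear automorphism $g^*$, with the eigenvalue pinned down by Lemma~\ref{Ddeg}(2). First I would record the structural facts about $\Nef(X) \subset \NS_{\R}(X)$: it is a closed convex cone; it is full-dimensional, since every ample class lies in its interior; and it is salient (contains no line), because if $D$ and $-D$ are both nef then $D \cdot C = 0$ for every curve $C$, so $D$ is numerically trivial, hence zero in $\NS_{\R}(X)$. Moreover $g^*$ is a linear isomorphism of $\NS_{\R}(X)$ carrying ample to ample and nef to nef, so $g^*(\Nef(X)) = \Nef(X)$ and $g^* y \neq 0$ for $0 \neq y \in \Nef(X)$, with inverse $(g^{-1})^*$. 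Finally, by Lemma~\ref{Ddeg}(2), the spectral radius of $g^*$ on $\NS_{\R}(X)$ equals $d_1(g)$. Thus it suffices to prove the following cone statement: a linear automorphism $T$ of a finite-dimensional real vector space preserving a salient, full-dimensional closed convex cone $K$ has its spectral radius $\rho(T)$ as an eigenvalue, with an eigenvector in $K$. I would then apply this to $T = g^*$ and to $T = (g^{-1})^*$ to produce $v_g$ and $v_{g^{-1}}$.

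To produce a cone eigenvector at all, I would fix a linear functional $\ell$ strictly positive on $K \setminus \{0\}$ (such $\ell$ exists because $K$ is closed and salient, by a separation argument), so that $B := \{x \in K : \ell(x) = 1\}$ is a compact convex base of $K$. Since $T$ is invertible and cone-preserving, $\ell(Tx) > 0$ on $B$, and $x \mapsto Tx / \ell(Tx)$ is a continuous self-map of $B$; Brouwer's fixed point theorem then yields $x_0 \in B$ with $T x_0 = \lambda_0 x_0$ and $\lambda_0 = \ell(Tx_0) > 0$. This already gives a cone eigenvector, but only with an a priori uncontrolled positive eigenvalue $\lambda_0 \le \rho(T)$.

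The main obstacle is to upgrade this to an eigenvector whose eigenvalue is exactly $\rho(T)$, i.e.\ to show that the real positive number $\rho(T)$ is itself an eigenvalue with a cone eigenvector --- the genuinely \emph{Perron--Frobenius} content. Here I would use the resolvent $R(z) = (zI - T)^{-1} = \sum_{n \ge 0} T^n z^{-n-1}$, which for real $z > \rho(T)$ maps $K$ into $K$ because each summand does and $K$ is closed. Choosing an interior point $x$ of $K$ whose image under the projection onto the generalized eigenspaces for the eigenvalues of modulus $\rho(T)$ is nonzero (possible, since that projection is a nonzero linear map and so cannot vanish on the open set $\mathrm{int}\,K$), the scalar function $\phi(z) = \ell(R(z)x) = \sum_{n} \ell(T^n x)\, z^{-n-1}$ has nonnegative coefficients and, using the comparability $\ell(T^n x) \asymp \|T^n x\|$ valid on the cone, has radius of convergence corresponding exactly to $z = \rho(T)$. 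Since $\phi$ is a rational function of $z$ whose only possible poles are eigenvalues of $T$, Pringsheim's theorem forces $z = \rho(T)$ to be a pole, so $\rho(T)$ is an eigenvalue; and if $p \ge 1$ is the pole order, then $v := \lim_{z \downarrow \rho(T)} (z - \rho(T))^p R(z) x$ exists, is nonzero, lies in $K$ (a limit of nonnegative multiples of elements of the closed cone $K$), and is annihilated by $\rho(T)I - T$, hence is the sought eigenvector. Applying this with $T = g^*$ gives $v_g \in \Nef(X)$ with $g^* v_g = d_1(g) v_g$, and with $T = (g^{-1})^*$ gives $v_{g^{-1}}$ with $(g^{-1})^* v_{g^{-1}} = d_1(g^{-1}) v_{g^{-1}}$, completing the argument. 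The delicate points to watch are the comparability $\ell(T^n x) \asymp \|T^n x\|$ (needed to identify the radius of convergence with $\rho(T)$) and the claim that the top Laurent coefficient of the resolvent is a genuine eigenvector lying in $K$.
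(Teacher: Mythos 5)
Your proof is correct and follows the same route as the paper: the paper's entire proof consists of applying the Perron--Frobenius theorem for cones (cited from Schneider--Tam) to $\Nef(X)$ and then to $g^{-1}$, which is exactly your reduction. The only difference is that you also supply a complete proof of that cone theorem (compact base plus Brouwer for a first eigenvector, then the resolvent/Pringsheim argument to pin the eigenvalue at the spectral radius $\rho(g^*|\NS_{\R}(X)) = d_1(g)$), all of which is sound, whereas the paper invokes it as a black box.
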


\begin{proof}
To get the first equality, we apply to the nef cone $\Nef(X)$ of the
Perron - Fobenius Theorem for cones as in Schneider - Tam \cite{ST}
page 4, item 5. The second is the
application of the first to $g^{-1}$. This proves the lemma.
\end{proof}

Here is the relation between dynamical degrees of automorphisms.

\begin{lemma}\label{hii}
Let $X$ be a projective  manifold of dimension $n$,
and $g \in \Aut(X)$. Then we have:
\begin{itemize}
\item[(1)]
Denote by $\Sigma(g)_i = \Sigma(g | X)_i$ the set of all eigenvalues of $g^* |
H^{i,i}(X, \C)$ (including multiplicities). Then $\Sigma(g)_1 =
\Sigma(g^{-1})_{n-1}$.
\item[(2)]
The dynamical degrees satisfy $d_1(g) = d_{n-1}(g^{-1})$.
\item[(3)]
$g$ is of positive entropy (resp. null entropy; periodic; parabolic)
if and only if so is $g^{m}$ for some (and hence for all) $m \ne 0$.
\end{itemize}
\end{lemma}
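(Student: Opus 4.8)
The plan is to derive all three statements from Poincar\'e duality together with the fact that an automorphism preserves the fundamental class. For (1), the cup product gives a pairing
$$\langle \cdot, \cdot \rangle : H^{1,1}(X,\C) \times H^{n-1,n-1}(X,\C) \longrightarrow H^{n,n}(X,\C) \cong \C,$$
which is perfect because the Poincar\'e duality pairing $H^k(X,\C) \times H^{2n-k}(X,\C) \to \C$ is perfect and compatible with the Hodge decomposition, so $H^{1,1}$ pairs perfectly with $H^{n-1,n-1}$. As $g$ is a biholomorphism it preserves orientation and the fundamental class, so $\int_X g^*\omega = \int_X \omega$ for every top class $\omega$; taking $\omega = \alpha \cup \beta$ yields $\langle g^*\alpha, g^*\beta\rangle = \langle \alpha, \beta\rangle$. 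Writing $A = g^*|_{H^{1,1}(X,\C)}$ and $B = g^*|_{H^{n-1,n-1}(X,\C)}$, this invariance says exactly that $B^{-1}$ is the adjoint of $A$ for the perfect pairing; under the induced isomorphism $H^{n-1,n-1}(X,\C) \cong (H^{1,1}(X,\C))^{\vee}$ it is carried to the transpose of $A$. Since $(g^{-1})^* = (g^*)^{-1}$ we have $(g^{-1})^*|_{H^{n-1,n-1}(X,\C)} = B^{-1}$, and the transpose of $A$ has the same characteristic polynomial as $A$; hence $\Sigma(g^{-1})_{n-1} = \Sigma(g)_1$, multiplicities included.

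Part (2) is then immediate. By Lemma \ref{Ddeg}(1) we have $d_1(g) = \rho(g^*|H^{1,1}(X,\R))$ and $d_{n-1}(g^{-1}) = \rho((g^{-1})^*|H^{n-1,n-1}(X,\R))$; complexifying does not alter the spectral radius, so these are the largest moduli occurring in $\Sigma(g)_1$ and in $\Sigma(g^{-1})_{n-1}$ respectively. By (1) these two eigenvalue multisets coincide, so their largest moduli agree and $d_1(g) = d_{n-1}(g^{-1})$.

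For (3), both entropy notions are governed by $\rho(g) = \rho(g^*|H^*(X,\C))$ and periodicity by $\ord(g)$. For $m > 0$ the eigenvalues of $(g^m)^* = (g^*)^m$ are the $m$-th powers of those of $g^*$, so $\rho(g^m) = \rho(g)^m$; thus positive entropy ($\rho > 1$) and null entropy ($\rho = 1$) are each preserved. To handle $m < 0$ I would reduce to $g^{-1}$ using that $\rho(g) = 1 \iff \rho(g^{-1}) = 1$: indeed, by the dichotomy in Convention \ref{entropy}(4), $\rho(g) = 1$ forces all eigenvalues of $g^*$, hence of $(g^*)^{-1}$, to have modulus $1$; alternatively one may combine part (2) with Lemma \ref{Ddeg}(4) to get $h(g) > 0 \iff h(g^{-1}) > 0$. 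Periodicity is elementary group theory: if $g^m$ has finite order $e$ then $g^{me} = \id$ with $me \ne 0$, and conversely, so $\ord(g) < \infty \iff \ord(g^m) < \infty$. Finally ``parabolic'' means ``$\ord(g) = \infty$ and null entropy'', so it too is preserved under nonzero powers by combining the two preceding observations.

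The only delicate point is part (1): one must verify that the cup-product pairing between $H^{1,1}$ and $H^{n-1,n-1}$ is perfect and that passage to the adjoint preserves eigenvalues together with their multiplicities. Everything in (2) and (3) is then formal bookkeeping with spectral radii and orders.
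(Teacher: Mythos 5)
Your proof is correct and follows essentially the same route as the paper: the paper also uses the cup-product (Poincar\'e duality) pairing between $H^{1,1}(X,\C)$ and $H^{n-1,n-1}(X,\C)$, its $g^*$-invariance, and the resulting identification of $g^*|H^{n-1,n-1}$ with $(A^t)^{-1}$ to get (1), then deduces (2) and (3) from Lemma \ref{Ddeg}. Your write-up merely spells out the bookkeeping for (2) and (3) that the paper leaves implicit.
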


\begin{proof}
There is a natural perfect pairing
$$H^{1,1}(X, \C) \times H^{n-1,n-1}(X, \C) \rightarrow \C$$
induced by the cup product, via the Hodge
decomposition. This pairing is preserved by the action of $g^*$; see
Griffiths - Harris \cite{GH} page 59. So a simple linear algebraic
calculation shows that if $g^* | H^{1,1}(X, \C)$ is represented by a
matrix $A$ then $g^* | H^{n-1,n-1}(X, \C)$ is represented by the
matrix $(A^t)^{-1}$. Thus the lemma follows; see Lemma \ref{Ddeg}.
\end{proof}

The result below shows that the first dynamical degree of an automorphism
is preserved even after lifting up or down by a generically
finite and surjective morphism.

\begin{lemma}\label{quot}
Let $f: X \rightarrow Y$ be a $g$-equivariant generically finite
surjective morphism between projective manifolds of dimension $n \ge 2$.
Then we have:
\begin{itemize}
\item[(1)]
$d_1(g | X) = d_1(g | Y)$.
\item[(2)]
$g | X$ is of positive entropy (resp. null entropy;  periodic) if and only if so is $g | Y$.
\item[(3)]
$g | X$ is of parabolic if and only if so is $g | Y$.
\item[(4)]
If $g | X$ is rigidly parabolic then so is $g | Y$.
\end{itemize}
\end{lemma}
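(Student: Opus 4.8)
The plan is to prove (1) by squeezing $d_1(g|X)$ and $d_1(g|Y)$ against each other, and then to read off (2)--(4) from (1) together with short geometric arguments. Write $g_X=g|X$, $g_Y=g|Y$, and use the $g$-equivariance $f\circ g_X=g_Y\circ f$ in the two forms it induces on cohomology: $f^*\circ g_Y^*=g_X^*\circ f^*$, and (since $g_X^*=(g_X^{-1})_*$ for the automorphism $g_X$) also $f_*\circ g_X^*=g_Y^*\circ f_*$. For the inequality $d_1(g|Y)\le d_1(g|X)$ I would use that $f^*\colon\NS_{\R}(Y)\to\NS_{\R}(X)$ is injective, because $f_*f^*=(\deg f)\,\id$ by the projection formula. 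Since $f^*$ intertwines $g_Y^*$ and $g_X^*$, its image is a $g_X^*$-invariant subspace on which $g_X^*$ is conjugate to $g_Y^*$; hence every eigenvalue of $g_Y^*|\NS_{\R}(Y)$ is also one of $g_X^*|\NS_{\R}(X)$, and the claim follows from Lemma \ref{Ddeg}(2).

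For the reverse inequality I would push forward the Perron--Frobenius eigenvector. By Lemma \ref{PF} there is a nonzero $v\in\Nef(X)$ with $g_X^*v=d_1(g|X)\,v$. Then $f_*v\in\NS_{\R}(Y)$ satisfies $g_Y^*(f_*v)=f_*(g_X^*v)=d_1(g|X)\,(f_*v)$, so it suffices to show $f_*v\ne0$; then $d_1(g|X)$ is an eigenvalue of $g_Y^*|\NS_{\R}(Y)$ and Lemma \ref{Ddeg}(2) gives $d_1(g|X)\le d_1(g|Y)$, completing (1). To see $f_*v\ne0$, take an ample $A$ on $Y$; by the projection formula $(f_*v)\cdot A^{n-1}=v\cdot(f^*A)^{n-1}$, and $f^*A$ is nef and big since $(f^*A)^n=(\deg f)\,A^n>0$. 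The main technical point is precisely the positivity $v\cdot(f^*A)^{n-1}>0$ for a nonzero nef $v$; I would obtain it from Kodaira's lemma, writing $f^*A\equiv A'+E$ with $A'$ ample and $E\ge0$ effective, expanding, and using that nef classes restrict to nef classes on the components of $E$, so that $v\cdot(f^*A)^{n-1}\ge v\cdot(A')^{n-1}>0$. This is the step I expect to require the most care.

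Given (1), the positive/null entropy part of (2) is immediate from Lemma \ref{Ddeg}(4), since $d_1(g|X)=d_1(g|Y)$. For periodicity: if $g_X$ has finite order then $g_X^m=\id$ forces $g_Y^m=\id$ because $f$ is dominant and equivariant; conversely, if $g_Y^m=\id$ then $f\circ g_X^m=f$, so $g_X^m$ lies in the group $\Aut(X/Y)$ of automorphisms commuting with $f$, which is finite (it injects into the symmetric group on a general fibre of the generically finite map $f$), whence a further power of $g_X^m$ is the identity. Part (3) then follows formally: $g$ is parabolic if and only if it is of null entropy and of infinite order, and both properties transfer in both directions by (2).

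Finally, for (4) I would transfer rigid parabolicity by a fibre-product construction. Let $(\bar Y,g)$ be $g$-equivariantly birational to $(Y,g)$ and let $\pi\colon\bar Y\to Z$ be a $g$-equivariant surjective morphism with $\dim Z>0$; I must show $g|Z$ is parabolic. After a $g$-equivariant blow-up (which does not change $Z$) I may assume the birational map $\bar Y\cdots\to Y$ is a morphism $h$. Form $W=\bar Y\times_Y X$ with $g$ acting diagonally, and let $\bar X\to W$ be a $g$-equivariant resolution of the main component (using \cite{Hi}); the one point to check carefully is exactly that this resolution can be taken $g$-equivariant and dominating the correct component. Then $\bar X\to X$ is birational (base change of $h$) and $\bar X\to\bar Y$ is generically finite and surjective (base change of $f$), both $g$-equivariant, so $(\bar X,g)$ is $g$-equivariantly birational to $(X,g)$. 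The composite $\bar X\to\bar Y\xrightarrow{\pi}Z$ is a $g$-equivariant surjective morphism with $\dim Z>0$, so the rigid parabolicity of $g|X$ forces $g|Z$ to be parabolic. As $(\bar Y,g)$ and $\pi$ were arbitrary, $g|Y$ is rigidly parabolic.
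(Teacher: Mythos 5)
Your proposal is correct and follows essentially the same route as the paper: the inequality $d_1(g|Y)\le d_1(g|X)$ via injectivity of $f^*$, the reverse inequality by pushing forward the Perron--Frobenius class and proving $f_*v\ne 0$ with Kodaira's lemma plus the positivity of a nonzero nef class against a power of an ample class, and the reduction of (4) to composing $g$-equivariant surjections after birational modification. You supply slightly more detail than the paper in two places it treats tersely (the finiteness of $\Aut(X/Y)$ for the periodicity direction of (2), and the fibre-product construction in (4)), but the argument is the same.
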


\begin{proof}
(1) Set $\Sigma_X = \Sigma(g^* | \NS_{\R}(X)) = \{\lambda \in \R \ |
g^*D = \lambda D$ for a divisor $0 \ne D \in \NS_{\R}(X)\}$. We show
first that $\Sigma_Y \subseteq \Sigma_X$, whence $d_1(g|Y) \le
d_1(g|X)$ by Lemma \ref{Ddeg}. Indeed, assume that $g^*\bar{L} =
\lambda \bar{L}$ for some $0 \ne \bar{L} \in \NS_{\R}(Y)$ and
$\lambda \in \Sigma_Y$. Set $L := f^*\bar{L}$. Then $g^*L = f^* g^*
\bar{L} = \lambda L$. Note that $0 \ne L \in \NS_{\R}(X) \subset
H^2(X, \R)$ because $f^* : H^*(Y, \R) \rightarrow H^*(X, \R)$ is an
injective ring homomorphism; see \cite{BHPV} I (1.3). Therefore,
$\lambda \in \Sigma_X$.
\par
Conversely, let $0 \ne L: = v_g \in \Nef(X)$ such that $g^*L = d_1
L$ with $d_1 = d_1(g | X)$, as in Lemma \ref{PF}. Set $\bar{L} :=
f_*L$. For any $H \in H^{2n-2}(Y, \R)$, by the projection formula, we
have $g^*\bar{L} . H = \bar{L} . g^{(-1)*} H = L . f^* g^{(-1)*} H =
L . g^{(-1)*} f^* H = g^*L . f^*H = d_1 L . f^*H = d_1 \bar{L} . H$.
So $(g^*\bar{L} - d_1 \bar{L}) . H = 0$ for {\it all} $H \in
H^{2n-2}(Y, \R)$. Hence $g^*\bar{L} = d_1 \bar{L}$ in $H^2(Y, \R)$.
\par
We claim that $\bar{L} \ne 0$ in $\NS_{\R}(Y)$, whence $d_1 = d_1(g
| X) \in \Sigma_Y$, $d_1(g | X) \le d_1(g | Y)$ by Lemma
\ref{Ddeg}, and we conclude the assertion (1). Assume the contrary
that $\bar{L} = 0$. Take an ample divisor $H_Y$ on $Y$. Then
$f^*H_Y$ is nef and big on $X$. So $f^*H_Y = A + D$ for an ample
$\Q$-divisor $A$ and an effective $\Q$-divisor on $X$, by Kodaira's
lemma. By the projection formula and the nefness of $L$, one has $0
= \bar{L} . H_Y^{n-1} = L . f^*H_Y^{n-1} = L . f^*H_Y^{n-2} . (A +
D) \ge L . f^*H_Y^{n-2} . A \ge \cdots \ge L . A^{n-1} \ge 0$.
Applying the Lefschetz hyperplane section inductively to reduce to
the Hodge index theorem for surfaces and using the nefness of $L$,
we see that $L = 0 \in \NS_{\R}(X) \subseteq H^2(X, \R)$, a
contradiction. So the claim and hence the assertion (1) are proved.
\par
With (1), the assertion (2) follows. Now (3) follows from (1) and
(2).
\par
(4) Assume that $g | X$ is rigidly parabolic. 
Modulo $g$-equivariant
birational modification, we have only to show that $g | Y_1$ is parabolic whenever
$Y \rightarrow Y_1$ is a $g$-equivariant surjective morphism with $\dim Y_1 > 0$.
This follows from the
assumption on $g | X$ and the $g$-equivariance of the composition $X
\rightarrow Y \rightarrow Y_1$. This proves the lemma.
\end{proof}

We now describe the behavior of automorphisms dynamics in a fibration.

\begin{lemma}\label{fib}
Let $X \rightarrow Y$ be a $g$-equivariant surjective morphism
between projective manifolds with $\dim X > \dim Y > 0$. Then we have:
\begin{itemize}
\item[(1)]
If $g | X$ is of null entropy (resp. periodic), then so is $g | Y$.
\item[(2)]
Suppose that the pair $(X, g)$ is either rigidly parabolic or of
primitively positive entropy. Then $g | Y$ is rigidly parabolic.
\end{itemize}
\end{lemma}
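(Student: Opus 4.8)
The plan is to treat the two parts separately: part (1) rests on the injective, equivariant pullback on cohomology, while part (2) rests on a lifting argument that transports morphisms out of (birational models of) $Y$ back to birational models of $X$, so that the hypotheses placed on $(X,g)$ can be invoked.

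For part (1), I would first record that the structure morphism $f : X \to Y$ satisfies $f \circ g = g \circ f$, so the induced pullback $f^* : H^*(Y, \R) \to H^*(X, \R)$ — which is injective for any surjective morphism, by the projection formula applied with a suitable power of a hyperplane class; cf.\ \cite{BHPV} — intertwines $g^* | H^*(Y, \R)$ with $g^* | H^*(X, \R)$. Hence $f^*(H^*(Y,\R))$ is a $g^*$-invariant subspace on which $g^*$ is conjugate to $g^* | H^*(Y, \R)$, so the eigenvalues of $g^* | H^*(Y,\R)$ form a subset of those of $g^* | H^*(X,\R)$ and $\rho(g|Y) \le \rho(g|X)$ (equivalently, one applies the same reasoning to $f^* : \NS_\R(Y) \inj \NS_\R(X)$ together with Lemma \ref{Ddeg}(2) to get $d_1(g|Y) \le d_1(g|X)$). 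Thus positive entropy ascends from $Y$ to $X$, and by contraposition null entropy of $X$ forces null entropy of $Y$. For the periodic case, $g^m | X = \id$ gives $(g^m | Y) \circ f = f \circ (g^m | X) = f$, and since $f$ is surjective this yields $g^m | Y = \id$; so $g|Y$ is periodic.

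For part (2), I would first show $g|Y$ is itself parabolic and then verify the defining condition of rigid parabolicity in \ref{entropy}(9). If $(X,g)$ is rigidly parabolic, then applying the definition to the trivial birational model of $X$ and to the given morphism $f : X \to Y$ (with $\dim Y > 0$) immediately gives that $g|Y$ is parabolic. If instead $(X,g)$ is of primitively positive entropy, then since $0 < \dim Y < \dim X$, were $g|Y$ of positive entropy or periodic the morphism $f$ would exhibit $(X,g)$ as imprimitive through case (10a) or (10b) of \ref{entropy}, a contradiction; hence $g|Y$ is of null entropy and of infinite order, i.e.\ parabolic. It then remains to treat an arbitrary $(\tilde Y, g)$ that is $g$-equivariantly birational to $(Y,g)$ together with an arbitrary $g$-equivariant surjective $h : \tilde Y \to Z$ with $\dim Z > 0$, and to show $g|Z$ is parabolic. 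Composing $f : X \to Y$ with the birational map $Y \cdots \to \tilde Y$ (which is $g$-equivariant by \ref{entropy}(7)) and with $h$ gives a $g$-equivariant dominant rational map $X \cdots \to Z$; by Hironaka's $g$-equivariant resolution (\ref{entropy}(5)) there is a $g$-equivariant birational morphism $\tilde X \to X$ turning this into a $g$-equivariant surjective morphism $\tilde X \to Z$, with $(\tilde X,g)$ $g$-equivariantly birational to $(X,g)$. Since $0 < \dim Z \le \dim \tilde Y = \dim Y < \dim X = \dim \tilde X$, the rigidly parabolic case gives $g|Z$ parabolic directly from the definition applied to $\tilde X \to Z$, while the primitively-positive-entropy case forces $g|Z$ to be neither of positive entropy nor periodic (as $0 < \dim Z < \dim \tilde X$), hence again parabolic. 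This establishes that $(Y,g)$ is rigidly parabolic.

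The main obstacle is the construction in the last step: converting an arbitrary morphism out of an arbitrary birational model $\tilde Y$ of $Y$ into a genuine $g$-equivariant surjective morphism out of a birational model of $X$, via the $g$-equivariant resolution of the composite rational map. The accompanying point to verify is the dimension bookkeeping $\dim Z < \dim X$, which is exactly what permits the primitivity hypothesis on $(X,g)$ to be applied in the positive-entropy case.
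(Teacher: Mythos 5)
Your proposal is correct and follows essentially the same route as the paper: part (1) via the injectivity of $f^*$ on cohomology intertwining the $g^*$-actions (the paper simply says ``similar to Lemma \ref{quot}''), and part (2) by composing any $g$-equivariant surjection out of a birational model of $Y$ with $f$ and resolving equivariantly to contradict rigidity or primitivity of $(X,g)$ using $0 < \dim Z \le \dim Y < \dim X$. You merely spell out the resolution of the composite rational map and the dimension bookkeeping that the paper leaves implicit.
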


\begin{proof}
The proof of (1) is similar to that of Lemma \ref{quot}. Suppose the
contrary that (2) is false for some $Y$ in (2). 
Then, after $g$-equivariant birational modification, 
there is a
$g$-equivariant surjective morphism $Y \rightarrow Z$ with $\dim Z >
0$ such that $g | Z$ is not parabolic. Thus, $g | Z$ is periodic or
of positive entropy. This contradicts the rigidity or primitivity of
$(X, g)$ because $0 < \dim Z \le \dim Y < \dim X$.
\end{proof}

Here is the relation between Salem numbers and dynamical degrees in
\ref{entropy}:

\begin{lemma}\label{salem}
Let $X$ be a projective manifold and $g \in \Aut(X)$ of positive entropy.
Then we have:
\begin{itemize}
\item[(1)]
If $\dim X = 2$, then $d_1(g) = d_1(g^{-1}) = e^{h(g)}$ and it is a Salem number.
\item[(2)]
Suppose $\dim X = 3$ and there is a $g$-equivariant morphism $f : X
\rightarrow Y$ onto a smooth projective curve $Y$ with connected
general fibre $F$. Then all $e^{h(g^{\pm})}$, $d_1(g^{\pm})$,
$d_2(g^{\pm})$ are equal and it is a Salem number.
\end{itemize}
\end{lemma}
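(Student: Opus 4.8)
The plan is to treat the surface case (1) by the standard Lorentzian-lattice argument, and to reduce the threefold case (2) to it by manufacturing a $g^*$-invariant Lorentzian form out of the fibre class. For (1), Lemma \ref{Ddeg}(2) gives $d_1(g) = \rho(g^* | \NS_{\R}(X))$, and since $\dim X = 2$ the only dynamical degree exceeding $1$ is $d_1$, so Lemma \ref{Ddeg}(3) yields $e^{h(g)} = d_1(g)$, while Lemma \ref{hii}(2) with $n = 2$ gives $d_1(g^{-1}) = d_1(g)$. It remains to see that $\lambda := d_1(g)$ is Salem. I would observe that $g^*$ acts on the lattice $\NS(X)/(\torsion)$ as an integral isometry of the intersection form, which by the Hodge index theorem has signature $(1, b)$ with $b \ge 0$. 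By Lemma \ref{PF} there is a nef eigenvector $v_g$ with $g^* v_g = \lambda v_g$ and $\lambda > 1$, so $g^*$ is a hyperbolic isometry of a Lorentzian lattice; the classification of such isometries forces $\lambda$ and $\lambda^{-1}$ to be the only eigenvalues off the unit circle, with all others of modulus $1$, which is exactly the Salem condition.

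For (2), I would first produce a distinguished $g^*$-invariant nef class. Taking an ample (degree-one) divisor $H_Y$ on the curve $Y$, the class $[F] := f^* H_Y$ is nonzero and nef, and since any automorphism of a curve preserves the numerical class of a point, $g^*[F] = f^*((g|Y)^* H_Y) = [F]$. I claim $d_1(g) = d_2(g)$. With $v_g$ as in Lemma \ref{PF}, the nef classes $v_g$ and $[F]$ have distinct $g^*$-eigenvalues $\lambda > 1$ and $1$, hence are not proportional, so by Lemma \ref{DS}(1) the product $w := v_g \cdot [F] \in H^{2,2}(X,\R)$ is nonzero. Since $g^* w = (g^* v_g)(g^*[F]) = \lambda w$, Lemma \ref{Ddeg}(1) gives $d_2(g) = \rho(g^* | H^{2,2}(X,\R)) \ge \lambda = d_1(g)$. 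Applying the same inequality to $g^{-1}$ (for which $[F]$ is again invariant) and using Lemma \ref{hii}(2) in the form $d_2(g^{-1}) = d_1(g)$ and $d_1(g^{-1}) = d_2(g)$, I obtain $d_1(g) \ge d_2(g)$, hence equality. Writing $\lambda$ for the common value, $d_1(g^{\pm}) = d_2(g^{\pm}) = \lambda$, and $e^{h(g^{\pm})} = \max(d_1, d_2) = \lambda$ by Lemma \ref{Ddeg}(3).

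To see that $\lambda$ is Salem in case (2), I would use $[F]$ to build an invariant form. Define $q(D_1, D_2) := D_1 \cdot D_2 \cdot [F]$ on $\NS_{\R}(X)$; because $F$ is a smooth connected surface this equals $(D_1|_F)\cdot(D_2|_F)$ computed on $F$, and because $g^*[F] = [F]$ while $g^*$ fixes the degree of $0$-cycles, $q$ is $g^*$-invariant. The restriction map $\NS_{\R}(X) \to \NS_{\R}(F)$ carries $q$ to the intersection form of $F$, which by Hodge index has exactly one positive square; as $H_X|_F$ has positive square for an ample $H_X$, the form $q$ is Lorentzian modulo its radical. Passing to the integral quotient lattice obtained from $\NS(X)/(\torsion)$ by killing the radical of $q$, the map $g^*$ becomes a hyperbolic integral isometry of a Lorentzian lattice whose leading eigenvalue is $\lambda$, with nef eigenvector the image of $v_g$ (nonzero since $v_g \cdot [F] \ne 0$). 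The argument of (1) then shows $\lambda$ is Salem.

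The main obstacle I anticipate is precisely this Salem conclusion in (2): unlike on a surface there is no canonical $g^*$-invariant quadratic form on a threefold, so the whole argument hinges on the fibration supplying the invariant nef class $[F]$ and on checking that the induced form $q$ is genuinely Lorentzian—nondegenerate of signature $(1, m)$ after quotienting—rather than degenerating in a way that destroys the hyperbolic classification. Verifying the nonvanishing of the relevant restrictions (that $v_g|_F \ne 0$ and that $H_X|_F$ has positive square) is the technical heart, and is what makes the reduction to the surface case (1) legitimate.
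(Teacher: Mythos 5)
Your proposal is correct and follows essentially the same route as the paper: both manufacture the $g^*$-invariant integral Lorentzian lattice carried by the fibre class (the paper's $L=(\NS(X)|_F)/(\torsion)$ with the form $D_1\cdot D_2\cdot F$ is exactly your $\NS(X)/(\torsion)$ modulo the radical of $q$), check via Hodge index that the signature is $(1,r)$ and that $v_g$ has nonzero image, and then invoke the hyperbolic-isometry argument of McMullen's Theorem~3.2 for the Salem conclusion, with the equalities of dynamical degrees coming from Lemma~\ref{hii} and the duality $d_1(g)=d_2(g^{-1})$. The one imprecision is your parenthetical ``nonzero since $v_g\cdot[F]\ne 0$'': nonvanishing of $v_g\cdot F$ in $H^{2,2}(X,\R)$ must still be upgraded to $v_g\cdot F\cdot A\ne 0$ for some divisor $A$ before the image of $v_g$ in the quotient-by-radical is known to be nonzero (the paper does this by restricting the non-proportional nef classes $v_g$ and $F$ to a very ample surface and applying Cauchy--Schwarz/Hodge index there), but you correctly identify this verification as the technical heart of the reduction.
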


\begin{proof}
The result in Case(1) is well known and follows from Lemmas
\ref{Ddeg} and \ref{hii} and the proof of McMullen \cite{McK3}
Theorem 3.2.
\par
We consider Case(2).
Set $L : = (\NS(X) | F)/(\torsion) \subset \NS(F)/(\torsion)$. We define the following intersection form $\langle , \, \rangle_L$
on the lattice $L$:
$$\langle D_1 | F, D_2 | F \rangle_L := D_1 . D_2 . F \in H^6(X, \Z) \cong \Z.$$
This $\langle , \, \rangle_L$ is compatible with the intersection
form on $\NS(F)$ via the restriction $H^2(X, \Z) \rightarrow H^2(F,
\Z)$.
 This compatibility, the Hodge index theorem for the smooth
projective surface $F$, and the fact that $H | F \ne 0$ in $L$ with $H$ an
ample divisor of $X$, imply that the lattice $L$ is non-degenerate
and has signature $(1, r)$ with $1 + r \le \rank \, \NS(F)$. There
is a natural action $g^* | L$ on $L$ given by $g^*(D|F) = (g^* D) |
F$. Since $g^*F = F$ in $\NS(X)$, this action is well defined and
preserves the intersection form $\langle , \, \rangle_L$.
\par
Since $g^* F = F$ and $g^* v_g = d_1 v_g$ with $d_1 = d_1(g)
> 1$ in the notation of Lemma
\ref{PF}, our $F$ and $v_g$ are not proportional. So the Lefschetz
hyperplane section theorem on cohomology and the Cauchy-Schwarz
inequality or the Hodge index theorem for surfaces imply that $v_g .
F . A = (v_g |A) . (F | A) \ne 0$ for a very ample divisor $A$ on
$X$. So $v_g . F = v_g | F$ gives a non-zero $v := v_g | F \in L \otimes_{\bold Z} {\bold R}$.
Further, $g^* v = (g^*v_g) | F = d_1 v$. Since $L$ is an integral
lattice of signature $(1, r)$ and $g^* | L$ is an isometry of $L$,
by the proof of McMullen \cite{McK3} Theorem 3.2, $d_1$ is a Salem
number and all eigenvalues of $g^* | L$ are given as:
$$d_1, \, d_1^{-1}, \, \alpha_1, \, \bar{\alpha}_1, \, \dots,
\alpha_t, \, \bar{\alpha}_t$$ with $|\alpha_i| = 1$ and $2(t+1) = r
+ 1$. Arguing with $g^{-1} | L$, we get $d_1(g | X) = d_1(g^{-1} |
X)$ ($= d_2(g|X)$ by (\ref{hii})). The result follows; see Lemma
\ref{Ddeg}.
\end{proof}

The following result (though it will not be used in the sequel) is a
generalization of a well-known result in the case of surfaces.

\begin{lemma}\label{nefs}
Let $X$ be a projective manifold of dimension $n \ge 2$ and
$g \in \Aut(X)$ of positive entropy. Let $0 \ne v_i \in \Nef(X)$ $(1
\le i \le s)$ be nef divisors such that $g^* v_i = \lambda_i v_i$
for scalars $\lambda_i$ with $\lambda_i > 1$ and that $\lambda_i$
are pairwise distinct. Then we have:
\begin{itemize}
\item[(1)]
$s \le n-1$.
\item[(2)]
If $s = n-1$, then $d_1(g) = \max_{1 \le i \le s} \{\lambda_i\}$.
\end{itemize}
\end{lemma}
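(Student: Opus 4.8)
The plan is to pit two facts against each other: the non-vanishing of intersection products of the $v_i$, supplied by the Dinh--Sibony positivity of Lemma \ref{DS}, against the triviality of $g^*$ on the top cohomology $H^{n,n}(X,\R)\cong\R$. The technical heart is the following non-vanishing claim, which I would establish first: for every $1\le k\le n$ and every choice of distinct indices $i_1,\dots,i_k$ (assuming $s\ge k$), the product $v_{i_1}\cdots v_{i_k}$ is nonzero in $H^{k,k}(X,\R)$.

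I prove the claim by induction on $k$. The case $k=1$ is the hypothesis $v_i\ne 0$. For $k=2$, since $\lambda_{i_1}\ne\lambda_{i_2}$ the nef classes $v_{i_1},v_{i_2}$ are not proportional, so Lemma \ref{DS}(1) gives $v_{i_1}.v_{i_2}\ne 0$. For the step from $k$ to $k+1$ (with $2\le k\le n-1$) I apply Lemma \ref{DS}(2) with $m=k-1\le n-2$, taking $P_j=v_{i_j}$ for $j\le k-1$, $P=v_{i_k}$, and $P'=v_{i_{k+1}}$. Condition (2a) holds because $v_{i_1}\cdots v_{i_{k-1}}.v_{i_k}$ and $v_{i_1}\cdots v_{i_{k-1}}.v_{i_{k+1}}$ are products of $k$ distinct $v$'s, nonzero by the inductive hypothesis; condition (2b) holds with eigenvalues $\prod_{j\le k}\lambda_{i_j}$ and $\lambda_{i_{k+1}}\prod_{j\le k-1}\lambda_{i_j}$, which are distinct precisely because $\lambda_{i_k}\ne\lambda_{i_{k+1}}$. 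Hence $v_{i_1}\cdots v_{i_{k+1}}\ne 0$, completing the induction.

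For (1), suppose $s\ge n$. The claim gives $v_1\cdots v_n\ne 0$ in $H^{n,n}(X,\R)\cong\R$. But $g$ is a biholomorphism of degree one, so $g^*$ is the identity on $H^{2n}(X,\R)=H^{n,n}(X,\R)$; thus $v_1\cdots v_n=g^*(v_1\cdots v_n)=\bigl(\prod_{i=1}^n\lambda_i\bigr)v_1\cdots v_n$, forcing $\prod_i\lambda_i=1$, which is absurd as each $\lambda_i>1$. Hence $s\le n-1$. For (2), write $d_1:=d_1(g)$ and pick $v_g\in\Nef(X)$ with $g^*v_g=d_1v_g$ (Lemma \ref{PF}); by Lemma \ref{Ddeg}(2), $d_1=\rho(g^*|\NS_{\R}(X))$, so $d_1\ge\max_i\lambda_i$ since each $\lambda_i$ is a real positive eigenvalue of $g^*|\NS_{\R}(X)$. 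If $d_1>\max_i\lambda_i$, then $d_1,\lambda_1,\dots,\lambda_{n-1}$ are $n$ pairwise distinct reals exceeding $1$ and $v_g,v_1,\dots,v_{n-1}$ are $n$ nef eigendivisors, contradicting part (1). Therefore $d_1=\max_i\lambda_i$.

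I expect the main obstacle to be the non-vanishing claim, since the entire argument rests on controlling when an intersection product of nef classes fails to vanish, and the only engine for that is the iterated application of Lemma \ref{DS}(2). The hypothesis that the $\lambda_i$ are pairwise distinct is used exactly to verify the distinct-eigenvalue condition (2b) at each inductive step; once the claim is in hand, the remaining deductions are the clean observation that $g^*$ acts trivially on $H^{n,n}(X,\R)$ together with the elementary spectral-radius bound $\lambda_i\le d_1$.
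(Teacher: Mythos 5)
Your proposal is correct and follows essentially the same route as the paper: the paper likewise obtains $v_1\cdots v_n\ne 0$ by "applying Lemma \ref{DS} repeatedly" (your induction just spells this out), derives the contradiction from $g^*=\id$ on $H^{n,n}(X,\R)$ with $\prod\lambda_i>1$, and proves (2) by adjoining $v_n:=v_g$ from Lemma \ref{PF} to contradict (1) when $d_1(g)$ differs from all the $\lambda_i$. No gaps.
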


\begin{proof}
(1) Applying Lemma \ref{DS} repeatedly, we see that $u(s_1):=
\prod_{i=1}^{s_1} v_i \ne 0$ if $s_1 \le n$.
Note that $g^* = \id$ on $H^{n,n}(X, \R) \cong \R$.
If $s \ge n$, then
$u(n)$ is a non-zero scalar in $H^{n,n}(X, \R)$, whence $u(n) = g^*
u(n) = \lambda u(n)$ with $\lambda := \prod_{i=1}^{n} \lambda_i
> 1$. This is a contradiction.
\par
(2) Assume that $s = n-1$. If $d_1 := d_1(g)$ is one of $\lambda_i$,
then (2) is true by the maximality of $d_1(g)$ as in Lemma
\ref{Ddeg}. Suppose that $d_1 \ne \lambda_i$ for all $i$. one gets a
contradiction to (1) if one sets $v_{n} = v_g$ in the notation of
Lemma \ref{PF}. The lemma is proved.
\end{proof}

The result below shows that one may tell about the im/primitivity of
threefold automorphisms by looking at the algebraic property of its
dynamical degrees or entropy.

\begin{lemma}\label{3nef}
Let $X$ be a smooth projective threefold and $g \in
\Aut(X)$ such that the pair $(X, g)$ is of
imprimitively positive entropy. Then we have:
\begin{itemize}
\item[(1)]
All scalars $e^{h(g^{\pm})}$, $d_1(g^{\pm})$, $d_2(g^{\pm})$
are equal and it is a Salem number.
\item[(2)]
For some $s > 0$, we have
$d_1(g^s) \in D_1^t(2) \cup D_1^{cy}(2) \cup D_1^{rc}(2)$.
\end{itemize}
\end{lemma}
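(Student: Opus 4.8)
The plan is to pass to the fibration witnessing imprimitivity and to split into two cases according to the dimension of the base. By Convention \ref{entropy}(10), $(X,g)$ being of imprimitively positive entropy means that, after a $g$-equivariant birational modification, there is a $g$-equivariant surjective morphism $f : X \to Z$ with $0 < \dim Z < 3$ and with $g|Z$ either of positive entropy (case (10a)) or periodic (case (10b)). Since $d_1(g)$, $d_2(g) = d_1(g^{-1})$ and $e^{h(g)}$ are preserved under $g$-equivariant birational maps and generically finite morphisms (Lemmas \ref{quot} and \ref{hii}), I may freely replace $X$ by this model and, using Stein factorization together with a $g$-equivariant resolution, assume that $f$ has connected general fibre $F$ and that $X$ and $Z$ are smooth. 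As $g|X$ has positive entropy and a curve carries no positive-entropy automorphism, a base of dimension one forces case (10b); and Remark \ref{entropy}(11) rules out case (10b) when $\dim Z = 2$, since it would give $1 = d_3(g) \ge d_1(g|F) \ge d_1(g) > 1$. Thus exactly two configurations survive: $\dim Z = 1$ with $g|Z$ periodic and $F$ a surface, or $\dim Z = 2$ with $g|Z$ of positive entropy and $F$ a curve.

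The case $\dim Z = 1$ is the quick one. Part (1) is then immediate from Lemma \ref{salem}(2) applied to $f : X \to Z$ onto the smooth curve $Z$ with connected general fibre: it yields at once that $e^{h(g^{\pm})}$, $d_1(g^{\pm})$, $d_2(g^{\pm})$ all coincide and that the common value is a Salem number. For part (2) I would invoke Remark \ref{entropy}(11): replacing $g$ by a power $g^s$ with $g^s|Z = \id$, the automorphism $g^s$ restricts to a positive-entropy automorphism of the surface $F$, and by \cite[Appendix, Theorem D]{NZ} one has $d_1(g^s|X) = d_1(g^s|F)$. Since $F$ carries a positive-entropy automorphism, the surface classification (Cantat \cite{Ca99}) shows that $F$ is a torus, a $K3$ or Enriques surface, or a rational surface, the automorphism descending to the unique minimal model in the two Kodaira-dimension-zero cases; hence $d_1(g^s) = d_1(g^s|F)$ lies in $D_1^t(2) \cup D_1^{cy}(2) \cup D_1^{rc}(2)$.

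The case $\dim Z = 2$ carries the real content, and its core is to show $d_1(g|X) = d_1(g|Z)$. Here $g|Z$ is a positive-entropy automorphism of the surface $Z$, so by Lemma \ref{salem}(1) the number $\mu := d_1(g|Z) = d_1(g^{-1}|Z) = e^{h(g|Z)}$ is a Salem number, and on $Z$ there are nef eigenvectors $v_Z, w_Z$ with $g^* v_Z = \mu v_Z$, $g^* w_Z = \mu^{-1} w_Z$ and $v_Z \cdot w_Z > 0$. Pulling back, $f^* v_Z$ and $f^* w_Z$ are nef on $X$ with the same eigenvalues, and $f^* v_Z \cdot f^* w_Z = (v_Z \cdot w_Z)\,[F] \ne 0$, where $[F]$ is the class of a general fibre. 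Let $v_g \in \Nef(X)$ be the nef eigenvector of Lemma \ref{PF}, so $g^* v_g = d_1 v_g$ with $d_1 := d_1(g|X)$. Since general fibres are numerically equivalent, $g_* F \equiv F$, so the projection formula gives $d_1\,(v_g \cdot F) = (g^* v_g) \cdot F = v_g \cdot g_* F = v_g \cdot F$, forcing $v_g \cdot F = 0$ because $d_1 > 1$; consequently $v_g \cdot f^* v_Z \cdot f^* w_Z = (v_Z \cdot w_Z)(v_g \cdot F) = 0$. Now I would apply Lemma \ref{DS}(2) with $P_1 = f^* v_Z$, $P = v_g$, $P' = f^* w_Z$: the eigenvalues $\mu d_1$ of $f^* v_Z \cdot v_g$ and $1$ of $f^* v_Z \cdot f^* w_Z$ are distinct and $f^* v_Z \cdot f^* w_Z \ne 0$, so were $f^* v_Z \cdot v_g \ne 0$ the lemma would force $f^* v_Z \cdot v_g \cdot f^* w_Z \ne 0$, a contradiction. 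Hence $f^* v_Z \cdot v_g = 0$ in $H^{2,2}(X,\R)$, and Lemma \ref{DS}(1) makes $v_g$ and $f^* v_Z$ proportional; comparing eigenvalues yields $d_1 = \mu$.

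Running the identical argument for $g^{-1}$ gives $d_1(g^{-1}|X) = \mu$, whereupon Lemma \ref{hii} delivers $e^{h(g^{\pm})} = d_1(g^{\pm}) = d_2(g^{\pm}) = \mu$, a Salem number, proving (1); and since $d_1(g) = \mu = d_1(g|Z)$ with $Z$ a surface admitting a positive-entropy automorphism, the surface classification places $\mu$ (possibly after descending the automorphism to the minimal model of $Z$) in $D_1^t(2) \cup D_1^{cy}(2) \cup D_1^{rc}(2)$, giving (2). The main obstacle is precisely this last case: extracting the inequality $d_1(g|X) \le \mu$, i.e. ruling out a strictly larger nef eigenvalue arising on the total space rather than on the base, which is exactly what the Dinh--Sibony positivity packaged in Lemma \ref{DS} is engineered to supply.
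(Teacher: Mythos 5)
Your proposal is correct and follows essentially the same route as the paper: the same reduction (via Remark \ref{entropy}(11) and Stein factorization) to the two configurations ``curve base with $g$ periodic on it'' and ``surface base with $g$ of positive entropy on it,'' Lemma \ref{salem}(2) for the first, and Lemma \ref{DS} applied to the pulled-back nef eigenvectors to force $d_1(g|X)=d_1(g|Z)$ in the second. The only cosmetic differences are that the paper packages the Dinh--Sibony step as the slightly stronger claim that every nef eigenvalue lies in $\{1,d_1^{\pm 1}\}$, and that the equality $d_1(g^s|X)=d_1(g^s|F)$ in the curve-base case, which you outsource to \cite{NZ}, is obtained in the paper by a short self-contained chain of inequalities combining Remark \ref{entropy}(11) with Lemmas \ref{hii} and \ref{salem}.
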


We now prove Lemma \ref{3nef}. After $g$-equivariant birational modification, 
there is a
$g$-equivariant surjective morphism $f: X \rightarrow Y_1$ such that
$\dim X > \dim Y_1 > 0$ and that either $g | Y_1$ is of positive entropy
or $g | Y_1$ is periodic.
Let $X \rightarrow Y \rightarrow Y_1$ be
the Stein factorization.
After $g$-equivariant blowups as in Hironaka \cite{Hi},
we may assume that $X$, $Y$ and $Y_1$
are all smooth, $X \rightarrow Y$ has connected fibres $F$
and $Y \rightarrow Y_1$ is generically finite and surjective.
By Lemma \ref{quot},
either $g | Y$ is of positive entropy or $g | Y$ is periodic.
Since Question \ref{question} has a positive answer in dimension 2
as in Cantat \cite{Ca99}, our
lemma follows from Lemma \ref{salem} and the claim below.

\begin{claim} We have:
\begin{itemize}
\item[(1)]
Suppose that $g | Y$ is of positive entropy. Then the four scalars
$d_1(g^{\pm} | X)$, $d_1(g^{\pm} | Y)$ coincide and we denote it by
$d_1$.
\item[(2)]
Suppose that $g | Y$ is periodic say $g^s | Y = \id$ for some $s >
0$. Then the four scalars $d_1(g^{\pm s} | X)$, $d_1(g^{\pm s} | F)$
coincide and we denote it by $d_1^s$ or $d_1(g^s)$. So $d_1(g|X) =
d_1(g^{-1} | X) = d_1$.
\item[(3)]
For both cases in (1) and (2), if $0 \ne P \in \Nef(X)$ is a nef divisor such
that $g^*P = \lambda P$ then $\lambda \in \{1, d_1^{\pm}\}$.
\item[(4)]
For both cases in (1) and (2), $d_1 = d_1(g^{\pm}|X) = d_2(g^{\pm}|X) = e^{h(g^{\pm}|X)}$.
\end{itemize}
\end{claim}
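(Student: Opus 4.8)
The plan is to treat the two cases of the reduction separately, after noting that their dimensions are forced. In Case (1), $g|Y$ has positive entropy, so $\dim Y \ge 2$; since $\dim X = 3 > \dim Y$, necessarily $Y$ is a surface and the general fibre $F$ of $X \to Y$ is a curve. In Case (2), $g|Y$ is periodic, and by the observation in Remark (11) of \ref{entropy} the general fibre of $X \to Y$ must carry an automorphism of positive entropy; as a curve admits no such automorphism, $Y$ must be a curve and $F$ a surface. The main tools are the Perron--Frobenius nef eigenvectors of Lemma \ref{PF}, the Dinh--Sibony nonvanishing Lemma \ref{DS}, the numerical identities of Lemmas \ref{Ddeg} and \ref{hii}, and the surface Salem result Lemma \ref{salem}(1) together with its threefold-over-a-curve version Lemma \ref{salem}(2).

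In Case (1) write $d_1 = d_1(g|X)$ and $\mu = d_1(g|Y)$, and recall from Lemma \ref{salem}(1) that $\mu = d_1(g^{-1}|Y) = e^{h(g|Y)}$ is a Salem number. Since $f^*$ is an injective ring homomorphism on cohomology, pulling back a nef eigenvector on $Y$ shows $\mu \le d_1$, and likewise $d_1(g^{-1}|Y) \le d_1(g^{-1}|X)$. For the reverse inequality I would take the nef eigenvector $v_g$ with $g^* v_g = d_1 v_g$ from Lemma \ref{PF}; since $g^*[F] = [F]$ and $g^*$ acts trivially on $H^6(X, \R) \cong \R$, the eigenvalue identity forces $v_g \cdot F = 0$. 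Next let $\bar w, \bar u \in \Nef(Y)$ be the nef eigenvectors of $g^*$ with eigenvalues $\mu$ and $1/\mu$; as eigenvectors of a lattice isometry they are isotropic with $(\bar w \cdot \bar u)_Y \ne 0$, so $f^* \bar w \cdot f^* \bar u = (\bar w \cdot \bar u)_Y [F]$ and hence $v_g \cdot f^* \bar w \cdot f^* \bar u = (\bar w \cdot \bar u)_Y (v_g \cdot F) = 0$. Applying Lemma \ref{DS} with $P_1 = f^* \bar u$, $P = v_g$, $P' = f^* \bar w$ (hypotheses (2a), (2b) being checked via Lemma \ref{DS}(1) to rule out proportionality of $f^* \bar u$ and $v_g$) shows this triple product is nonzero unless $d_1 = \mu$. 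Thus $d_1 = \mu$, and the same run for $g^{-1}$ together with the Salem symmetry on $Y$ gives (1). Then $d_2(g|X) = d_1(g^{-1}|X) = d_1$ by Lemma \ref{hii}(2), so $e^{h(g^{\pm}|X)} = d_1$, proving (4); for (3) the identical $v_g \cdot F = 0$ argument applied to any nef $P$ with $g^* P = \lambda P$, $\lambda > 1$, again through Lemma \ref{DS}, forces $\lambda = d_1$, and running $g^{-1}$ gives $1/d_1$ for $\lambda < 1$.

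In Case (2), Lemma \ref{salem}(2) applied to the $g$-equivariant morphism $X \to Y$ onto the curve $Y$ directly yields that $e^{h(g^{\pm}|X)}$, $d_1(g^{\pm}|X)$, $d_2(g^{\pm}|X)$ all coincide with a single Salem number $d_1$, giving the $X$-part of (1)/(2) and all of (4). To reach $F$, replace $g$ by $g^s$ so that $g^s|Y = \id$; then $g^s$ preserves $F$ with $g^s|F$ of positive entropy. From the proof of Lemma \ref{salem}(2) the restriction $v_{g^s}|F$ is a nonzero nef $(g^s|F)^*$-eigenvector with eigenvalue $d_1(g^s|X) = d_1^s > 1$; since $F$ is a surface of positive entropy, Lemma \ref{salem}(1) shows the Salem number $d_1(g^s|F)$ is the only eigenvalue of $(g^s|F)^*$ exceeding $1$, so this forces $d_1(g^s|F) = d_1^s = d_1(g^s|X)$, and symmetrically for $g^{-s}$, proving (2). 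For (3), given nef $P$ with $g^* P = \lambda P$ and $\lambda > 1$, I would restrict to $F$: if $P|_F \ne 0$ then as above $\lambda = d_1$; if $P|_F = 0$ then $P \cdot F = 0$ in $H^{2,2}(X, \R)$, so Lemma \ref{DS}(1) makes $P$ proportional to the $g^*$-invariant nef class $[F]$, contradicting $\lambda > 1$. The case $\lambda < 1$ follows by applying this to $g^{-1}$.

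The main obstacle is the reverse inequality $d_1(g|X) \le d_1(g|Y)$ in Case (1): unlike the generically finite situation of Lemma \ref{quot}, there is no pushforward producing an eigenvector on $Y$, and a naive descent of $v_g$ to the base fails because of special-fibre components. The plan circumvents this by converting the fibrewise triviality $v_g \cdot F = 0$ into the vanishing of a triple intersection and feeding it to the Dinh--Sibony positivity Lemma \ref{DS}(2); verifying its nondegeneracy hypotheses (2a), (2b) --- in particular that $f^* \bar u$ and $v_g$ are not proportional and that the two induced eigenvalues are distinct precisely when $d_1 \ne \mu$ --- is the delicate point on which the whole argument turns.
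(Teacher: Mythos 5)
Your argument is correct and uses the same toolkit as the paper's --- the Perron--Frobenius eigenvectors of Lemma \ref{PF}, the Dinh--Sibony product Lemma \ref{DS}, and the Salem structure of surface actions from Lemma \ref{salem} --- but it is organized differently, most notably in Case (2). In Case (1) the difference is essentially cosmetic: the paper pulls back both eigenvectors $v_{(g^{\pm}|Y)}$ to nef classes $L^{\pm}$ on $X$ with $g^*L^{\pm}=d_1^{\pm}L^{\pm}$, and for any nef eigenvector $P$ with eigenvalue $\lambda\notin\{d_1^{\pm}\}$ observes that $u=L^+\cdot L^-\cdot P$ is a nonzero scalar by Lemma \ref{DS} satisfying $u=g^*u=\lambda u$, forcing $\lambda=1$; this gives (3) directly and (1) follows by taking $P=v_{(g^{\pm}|X)}$. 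You run the same triple product but detect its vanishing through $v_g\cdot F=0$ and then contradict Lemma \ref{DS}(2) when $d_1\neq\mu$ --- logically equivalent, with the same nondegeneracy checks. In Case (2) your route is genuinely different: the paper sandwiches via the inequalities of Remark (11) of \ref{entropy}, namely $d_1(g^{-s}|X)=d_2(g^s|X)\ge d_1(g^s|F)\ge d_1(g^s|X)$ together with its mirror for $g^{-1}$ and the equality $d_1(g^{s}|F)=d_1(g^{-s}|F)$ on the surface $F$, whereas you quote Lemma \ref{salem}(2) wholesale for the equalities on $X$ and then pin down $d_1(g^s|F)$ by noting that the restricted nef eigenvector exhibits $d_1(g^s|X)>1$ as an eigenvalue of $(g^s|F)^*$ on $\NS_{\R}(F)$, of which the surface Salem spectrum admits only one exceeding $1$. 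Both are valid (Lemma \ref{salem}(2) is proved before this claim, so there is no circularity); the paper's version is more self-contained at the level of inequalities, while yours is shorter by importing the full strength of Lemma \ref{salem}(2). Likewise your proof of (3) in Case (2), by restricting $P$ to $F$ and invoking Lemma \ref{DS}(1) when $P|_F=0$ to make $P$ proportional to the invariant class $[F]$, differs from the paper's reuse of the triple product $v_{(g|X)}\cdot v_{(g^{-1}|X)}\cdot P$, but is equally sound.
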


Let us prove the claim.
(4) follows from (1) - (2) and Lemmas \ref{hii} and \ref{Ddeg}.
Consider the case where $g | Y$ is of
positive entropy. Thus $\dim Y \ge 2$, whence $\dim Y = 2$ and the
fibre $F$ is of dimension $1$. The two scalars
$d_1(g^{\pm} | Y)$ coincide and we denote it by $d_1$; see Lemma
\ref{hii}. Set $L^{\pm} := f^*v_{(g^{\pm} | Y)} \in \Nef(X)$;
see Lemma \ref{PF} for the notation.
Note that $L^{\pm} \ne 0$ in $\NS_{\R}(X)$;
see \cite{BHPV} I (1.3).
Further, $g^* L^{\pm} =
f^* g^* v_{(g^{\pm} | Y)} = d_1^{\pm} L^{\pm}$.
Thus, to prove the claim in the present case,
we only have to show (3); see Lemma \ref{PF}.
If $\lambda \ne d_1^{\pm}$, then $u: = L^+ . L^- . P \in H^{3,3}(X, \R) \cong \R$
is a non-zero scalar by Lemma \ref{DS} and
hence $u = g^* u = d_1 d_1^{-1} \lambda u = \lambda u$. So $\lambda = 1$.
This proves the claim for the present case.

\par
Consider the case where $g^s | Y = \id$. As remarked in
(\ref{entropy}), $g^s | F$ is of positive entropy, so $\dim F = 2$
and $\dim Y = 1$. Further, $d_1(g^{-s} | X) = d_2(g^s | X) \ge
d_1(g^s | F) \ge d_1(g^s | X)$; see also Lemma \ref{hii}. Arguing
with $g^{-1}$, we get $d_1(g^s | X) \ge d_1(g^{-s} | F) \ge
d_1(g^{-s} | X)$. Since the two scalars $d_1(g^{\pm s} | F)$
coincide for surface $F$ by Lemma \ref{hii}, the above two sequences
of inequalities imply (2). Consider $u = v_{(g | X)} . v_{(g^{-1} |
X)} . P$ as in the early case, one proves (3) for the present case.
This proves the claim and also the lemma.

\par \vskip 1pc
The following lemma is crucial, which is derived from a result of
Nakamura - Ueno, and Deligne as in Ueno \cite{Ue}
Theorem 14.10. See \cite[Theorem A]{NZ} for the generalization
of \cite{Ue} Theorem 14.10 to meromorphic dominant maps on K\"ahler manifolds.

\begin{lemma}\label{deligne}
Let $X \rightarrow Y$ a $g$-equivariant surjective morphism between
projective manifolds with $\dim Y > 0$. Suppose that the pair $(X,
g)$ is either rigidly parabolic or of primitively positive entropy.
Then the Kodaira dimension $\kappa(Y) \le 0$. In
particular, $\kappa(X) \le 0$.
\end{lemma}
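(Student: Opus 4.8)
The plan is to argue by contradiction, reducing the bound on $\kappa(Y)$ to a statement about the automorphism induced on the base of the Iitaka fibration, where the theorem of Nakamura--Ueno and Deligne applies. So suppose $\kappa(Y) \ge 1$. Using \ref{entropy}(14) I would first pass to a convenient $g$-equivariant birational model: after $g$-equivariant blow-ups of $X$ and of $Y$ (which change neither the hypothesis on $(X,g)$ nor the number $\kappa(Y)$, both being $g$-equivariant birational invariants, cf. Lemma \ref{quot}), I may assume that the Iitaka fibration of $Y$ is an honest $g$-equivariant morphism $f : Y \to W$ with $\dim W = \kappa(Y) \ge 1$, and that the composite $h : X \to Y \to W$ is a $g$-equivariant surjective morphism.

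The next step is to show that the descended automorphism $g \mid W$ is parabolic. If $(X,g)$ is rigidly parabolic, this is immediate from the definition \ref{entropy}(9) applied to the $g$-equivariant surjective morphism $h : X \to W$ with $\dim W > 0$. If instead $(X,g)$ is of primitively positive entropy, I first note that $\dim W < \dim X$: otherwise $\kappa(Y) = \dim X$ forces $\dim Y = \dim X$ with $Y$ of general type, so $g \mid Y$ has finite order, and then $X \to Y$ is generically finite and Lemma \ref{quot}(2) would make $g \mid X$ periodic, contradicting positive entropy. With $0 < \dim W < \dim X$, primitivity (\ref{entropy}(10a), (10b)) rules out $g \mid W$ being of positive entropy or periodic; since every automorphism is exactly one of positive-entropy, parabolic, or periodic, $g \mid W$ is parabolic. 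In particular $\ord(g \mid W) = \infty$ in both cases.

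Now comes the key step, which I expect to be the main obstacle. The base $W$ of the Iitaka fibration carries the big divisor class coming from the descended pluricanonical sheaf, and since $g^* K_Y \sim K_Y$ this class is preserved by $g \mid W$; equivalently $(W,\Delta)$, with its orbifold boundary, is of log general type and $g \mid W$ is a log-birational automorphism of it. The theorem of Nakamura--Ueno and Deligne (Ueno \cite{Ue} Theorem 14.10; see also \cite[Theorem A]{NZ}) then forces $g \mid W$ to be of finite order. This contradicts $\ord(g \mid W) = \infty$ from the previous paragraph, and hence $\kappa(Y) \le 0$. The delicate point that absorbs most of the work is precisely this descent: the Iitaka fibration is canonical only up to birational modification, so one must verify that the action induced on a smooth model of $W$ is a genuine automorphism preserving the big (orbifold) polarization, so that the finiteness theorem can be applied.

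Finally, for the ``in particular'' clause I would run the same argument directly on the Iitaka fibration of $X$ itself. Assuming $\kappa(X) \ge 1$, take a $g$-equivariant model with Iitaka morphism $X \to W_X$, $\dim W_X = \kappa(X)$. If $\kappa(X) = \dim X$ then $X$ is of general type and $g$ is of finite order, contradicting both hypotheses; otherwise $0 < \dim W_X < \dim X$ and the reasoning above (now with $X \to W_X$ in the role of $h$) yields $g \mid W_X$ parabolic yet of finite order, again a contradiction. Hence $\kappa(X) \le 0$.
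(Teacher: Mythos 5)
Your proof is correct and follows essentially the same route as the paper: pass to a $g$-equivariant model of the Iitaka fibration, apply the Nakamura--Ueno/Deligne theorem (Ueno, Theorem 14.10) to conclude that the induced action on the base has finite order, and contradict the parabolicity forced by rigidity or primitivity. The only cosmetic difference is that the paper disposes of the case $\kappa(Y)=\dim Y$ up front via Iitaka's Theorem 11.12 (finiteness of the automorphism group of a variety of general type), whereas you fold that case into the primitivity branch.
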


\begin{proof}
If $\kappa(Y) = \dim Y$, then $\Aut(X)$ is a finite group; see
Iitaka \cite{Ii82} Theorem 11.12. Thus $g | Y$ is periodic, which is impossibe
by our assumption on $g$.
Suppose that $0 < \kappa(Y) < \dim Y$. After replacing with
$g$-equivariant blowups of $X$ and $Y$ as in Hironaka \cite{Hi}, we
may assume that $Y \rightarrow Z$ is a well-defined Iitaka fibring
with $Y$ and $Z$ smooth projective and $\dim Z = \kappa(Y) > 0$. The
natural homomorphism $\Bir(Y) \rightarrow \Bir(Z)$ between
birational automorphism groups, has a finite group as its image; see
\cite{Ue} Theorem 14.10. In particular, $g | Z$ is periodic. This
contradicts the assumption on $g$, noting that the composition $X
\rightarrow Y \rightarrow Z$ is $g$-equivariant. This proves the
lemma.
\end{proof}

For $g$ of positive entropy (not necessarily being primitive),
we have:

\begin{lemma} \label{k-2}
The following are true.
\begin{itemize}
\item[(1)]
Let $X$ be a projective manifold of dimension $n$. Suppose that $g \in \Aut(X)$ is of
positive entropy. Then the Kodaira dimension $\kappa(X) \le n - 2$.
\item[(2)]
Conversely, for every $n \ge 2$ and every $k \in \{-\infty, \ 0, \ 1, \ \dots, \ n-2\}$,
there are a projective manifold $X$ and $g \in \Aut(X)$ of positive
entropy such that $\dim X = n$ and $\kappa(X) = k$.
\end{itemize}
\end{lemma}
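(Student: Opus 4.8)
The plan is to prove (1) by ruling out the two top Kodaira dimensions by means of the $g$-equivariant Iitaka fibring, and to prove (2) by an explicit product construction. For (1), by Convention \ref{entropy}(14) I may replace $(X,g)$ by a $g$-equivariant blowup and assume there is a $g$-equivariant Iitaka fibring $f \colon X \to Y$ with $X$, $Y$ smooth projective, $\dim Y = \kappa(X) =: m$, and general fibre $F$ with $\kappa(F) = 0$. Since $n \ge 2$ gives $n-2 \ge 0$, and since positive entropy is a birational invariant by Convention \ref{entropy}(5), it suffices to reach a contradiction when $m \in \{n-1,\, n\}$. If $m = n$, then $X$ is of general type, so $\Bir(X)$ is finite (Iitaka \cite{Ii82} Theorem 11.12) and $g$ is periodic; this contradicts positive entropy, since $\rho(g) > 1$ forces $\ord(g) = \infty$.

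The case $m = n-1$ is the heart of the matter. Here $\dim F = 1$ and $\kappa(F)=0$, so $F$ is an elliptic curve. By the theorem of Nakamura--Ueno and Deligne (Ueno \cite{Ue} Theorem 14.10, applied to $f$), the image of $\Bir(X) \to \Bir(Y)$ is finite, so $g \mid Y$ is periodic; I then replace $g$ by a power and assume $g^s \mid Y = \id$. The remaining task is to deduce that $g^s \mid X$ has null entropy. The approach I would take is to study the $g^s$-action on $H^2(X,\R)$ through the Leray filtration of $f$, whose graded pieces are subquotients of $H^0(Y, R^2 f_* \R)$, $H^1(Y, R^1 f_* \R)$ and $H^2(Y, R^0 f_* \R)$. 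After enlarging $s$ once more (the linear part of an automorphism of an elliptic curve has order dividing $12$, and translations act trivially on $H^1(F,\R)$), the fibrewise action of $g^s$ on $R^1 f_* \R$ becomes trivial; together with $g^s \mid Y = \id$ this makes $g^s$ act trivially on every Leray graded piece, hence unipotently on $H^2(X,\R)$. Therefore $\rho\bigl((g^s)^* \mid H^2(X,\R)\bigr) = 1$, so $d_1(g^s \mid X) = \rho\bigl((g^s)^* \mid \NS_{\R}(X)\bigr) = 1$ by Lemma \ref{Ddeg}(2), and $g^s \mid X$ --- hence $g \mid X$ by Lemma \ref{hii}(3) --- has null entropy, the desired contradiction. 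This rules out $m = n-1$ and finishes (1).

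For (2) I would realize every prescribed value as a product with a fixed surface automorphism of positive entropy. Fix a $K3$ surface $S$ carrying $h \in \Aut(S)$ of positive entropy (McMullen \cite{McK3}), so $\kappa(S) = 0$. For $0 \le k \le n-2$ put $X := S \times C^{\times k} \times E^{\times(n-2-k)}$ and $g := h \times \id$, where $C$ is a smooth curve of genus $\ge 2$ ($\kappa(C)=1$) and $E$ is an elliptic curve ($\kappa(E)=0$); for $k = -\infty$ put instead $X := R \times (\P^1)^{\times(n-2)}$ with $R$ a rational surface admitting $h_R \in \Aut(R)$ of positive entropy (see \cite{McP2}) and $g := h_R \times \id$. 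In every case $\dim X = n$, and by additivity of the Kodaira dimension for products one gets $\kappa(X) = \kappa(S) + k = k$ in the first family and $\kappa(X) = -\infty$ in the second. For the entropy, the K\"unneth decomposition shows that $g^* \mid H^{1,1}(X,\R)$ is block-diagonal and contains $h^* \mid H^{1,1}(S,\R)$ as a block, so $d_1(g) = \rho\bigl(g^* \mid H^{1,1}(X,\R)\bigr) \ge d_1(h) > 1$ and $g$ is of positive entropy by Lemma \ref{Ddeg}.

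The main obstacle is precisely the middle case $m = n-1$ of part (1): passing from the periodicity of $g \mid Y$ and the triviality of the fibrewise dynamics on the elliptic fibres to the null entropy of $g \mid X$. The Leray-filtration argument above is the delicate point, and one could alternatively package it as a statement about relative dynamical degrees of a fibration of relative dimension one; everything else reduces to the classical general-type finiteness and to routine bookkeeping of Kodaira dimensions and dynamical degrees under products.
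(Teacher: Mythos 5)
Your part (2) and your treatment of the case $\kappa(X)=n$ are correct and essentially what the paper does: a product with a positive-entropy surface automorphism, K\"unneth for the entropy, additivity of the Kodaira dimension for the values of $\kappa$. The problem is the case $\kappa(X)=n-1$, which you yourself identify as the delicate point, and there your Leray argument has a genuine hole. After the $g$-equivariant blowups needed to make the Iitaka fibring a morphism $f\colon X\to Y$, $f$ need not be equidimensional, and even where the fibres are curves the sheaves $R^1f_*\R$ and $R^2f_*\R$ have stalks over the discriminant equal to $H^1$ and $H^2$ of the degenerate fibres (which may have components of dimension $\ge 2$), on which $g^s$ acts as an automorphism of a projective variety and could a priori have spectral radius $>1$. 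These stalks contribute to $H^1(Y,R^1f_*\R)$ and $H^0(Y,R^2f_*\R)$, so knowing that the action on $R^1f_*\R$ is trivial at the generic point and that $g^s|Y=\id$ does not yield that ``$g^s$ acts trivially on every Leray graded piece'': the positive entropy could hide in the special fibres. You also never address $R^2f_*\R$ at all. This can presumably be repaired (flatten $f$, analyse $H^1$ and $H^2$ of one-dimensional degenerate fibres, or invoke a product formula for relative dynamical degrees), but that is real work the sketch omits.

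The paper sidesteps all of this with a short intersection-theoretic argument that treats $\kappa(X)\ge n-1$ uniformly, with no case division and no information about the fibres. After a $g$-equivariant blowup write $|sK_X|=|M|+(\text{fixed part})$ with the movable part $|M|$ base point free defining the Iitaka fibring; then $g^*M\sim M$, and $M^{\kappa(X)}$ is homologous to a positive multiple of a fibre class, so $M^r\ne 0$ for all $r\le\kappa(X)$, in particular $M^{n-1}\ne 0$ once $\kappa(X)\ge n-1$. Taking the nef class $v_g$ with $g^*v_g=d_1(g)v_g$ from Lemma \ref{PF} and applying Lemma \ref{DS}(2) repeatedly (the eigenvalues $1$ and $d_1(g)$ being distinct) gives that $u:=v_g\cdot M^{n-1}$ is a nonzero element of $H^{n,n}(X,\R)\cong\R$; but $g^*$ is the identity on $H^{n,n}(X,\R)$ while $g^*u=d_1(g)u$ with $d_1(g)>1$, a contradiction. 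You may want to adopt this route in place of the Leray analysis.
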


\begin{proof}
(1) Assume the contrary that $\kappa(X) \ge n-1$. After
$g$-equivariant blowup as in Hironaka \cite{Hi}, we may assume that
for some $s > 0$, one has $|sK_X| = |M| + Fix$ with $Fix$ the fixed
part and with the movable part $|M|$ base point free, so that $f : =
\Phi_{|M|} : X \rightarrow Y \subset \P^N$ with $N = h^0(X, M)$, is
the ($g$-equivariant) Iitaka fibring. Note that $M^{\kappa(X)}$ is
homologous to a positive multiple of a fibre of $f$ and hence $M^r
\ne 0$ for every $r \le \kappa(X)$. Also $g^*M \sim M$ (linearly
equivalent). With the notation of Lemma \ref{PF} and by Lemma
\ref{DS}, we have $v_g . M^r \ne 0$ in $H^{r+1,r+1}(X, \R)$ for all
$r \le n-1$. Since $g^* = \id$ on $H^{n,n}(X, \R) \cong \R$, for the
scalar $u := v_g . M^{n-1} \in H^{n,n}(X, \R)$, we have $u = g^* u =
d_1 u$ with $d_1 = d_1(g) > 1$, so $u = 0$, a contradiction.
\par
(2) Let $S$ be a surface with $g \in \Aut(S)$ of positive entropy.
Let $Z$ be any $(n-2)$-fold. Set $X := S \times Z$ and $g | X :=
(g|S) \times (\id_Z)$. Then $g |X$ is of positive entropy by looking
at the K\"unneth formula for $H^2(X, \C)$ as in Griffiths-Harris
\cite{GH} page 58; see also \cite{Di} Proposition 5.7. 
Also $\kappa(X) = \kappa(S) + \kappa(Z)$. All
values in $\{-\infty, \ 0\}$ (resp. $\{-\infty, \ 0, \ 1, \ \dots, \
n-2\}$) are attainable as the Kodaira dimension of a suitable $S$ (resp. $Z$);
see, for instance, Cantat \cite{Ca01} and McMullen \cite{McP2}. Thus
(2) follows. This proves the lemma.
\end{proof}

We need the following result on the eigenvalues of $g^* | H^*(X, \C)$.

\begin{lemma}\label{uni}
Let $X$ be a projective manifold of dimension $n$, and $g \in \Aut(X)$
of null entropy. Then there is an integer $s > 0$ such that $(g^s)^*
| H^*(X, \C)$ is unipotent, i.e., all eigenvalues are equal to $1$.
\end{lemma}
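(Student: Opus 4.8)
The plan is to exploit the integral structure on cohomology together with Kronecker's theorem on algebraic integers. First I would note that $g^*$ preserves the lattice $\Lambda := H^*(X, \Z)/(\torsion)$, on which it acts as a group automorphism; choosing a $\Z$-basis of $\Lambda$, the operator $g^* | \Lambda$ is given by an integral matrix $A$. Since $H^*(X, \C) = \Lambda \otimes_{\Z} \C$, the eigenvalues of $g^* | H^*(X, \C)$ are precisely the roots of the characteristic polynomial $\chi_A(t) \in \Z[t]$, which is monic with integer coefficients; in particular every such eigenvalue is an algebraic integer.

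Next I would invoke the hypothesis of null entropy. By Convention \ref{entropy}(4), $\log \rho(g) = 0$ forces $\rho(g) = 1$ and every eigenvalue of $g^* | H^*(X, \C)$ to have modulus $1$. Because $\chi_A$ has integer (hence rational) coefficients, its set of roots is stable under the action of $\Gal(\ol{\Q}/\Q)$; thus for any eigenvalue $\lambda$, all of its $\Q$-conjugates are again roots of $\chi_A$, that is, again eigenvalues of $g^* | H^*(X, \C)$, and so each of them has modulus $1$ as well.

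The key number-theoretic input is Kronecker's theorem: a nonzero algebraic integer all of whose conjugates lie on the unit circle is a root of unity. Applying it to each eigenvalue $\lambda$ (which is nonzero since $A$ is invertible, and all of whose conjugates have modulus $1$ by the previous step) shows that every eigenvalue of $g^* | H^*(X, \C)$ is a root of unity. As $H^*(X, \C)$ is finite dimensional there are only finitely many such eigenvalues, so I may take $s > 0$ to be a common multiple of their orders; then every eigenvalue of $(g^s)^* = (g^*)^s$ equals $1$, which is exactly the statement that $(g^s)^* | H^*(X, \C)$ is unipotent.

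The proof is essentially immediate once the integral structure is in place, so I do not expect a serious obstacle; the only nonformal ingredient is Kronecker's theorem, and the one point requiring a little care is the passage from a single eigenvalue having modulus $1$ to \emph{all} of its conjugates doing so, which is precisely where the integrality of $\chi_A$ enters.
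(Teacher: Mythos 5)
Your proof is correct and follows essentially the same route as the paper: integrality of the characteristic polynomial of $g^*$ on $\oplus_i H^i(X,\Z)/(\torsion)$, the null-entropy hypothesis forcing all eigenvalues (and hence all their $\Q$-conjugates) to have modulus $1$, and Kronecker's theorem to conclude they are roots of unity. The only cosmetic difference is that you invoke the dichotomy in Convention \ref{entropy}(4) for the modulus-one statement where the paper cites Lemmas \ref{Ddeg} and \ref{hii}; both are legitimate.
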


\begin{proof}
By Lemmas \ref{Ddeg} and \ref{hii}, every eigenvalue $\lambda$ of
$g^* | H^*(X, \C)$ has modulus $1$. Since $g^*$ is defined over
$\oplus_{i=0}^{2n} H^i(X, \Z)/(\torsion)$ the monic characteristic
polynomial of $g^* | H^*(X, \C)$ has integer coefficients,
whence all eigenvalues $\lambda$ of $g^*$ are algebraic integers. So
every eigenvalue $\lambda$ of $g^*$ is an algebraic integer and all
its conjugates (including itself) have modulus $1$. Thus these
$\lambda$ are all units of $1$ by Kronecker's theorem. The lemma follows.
\end{proof}

The result below says that a rigidly parabolic action on an abelian
variety is essentially the lifting of a translation.

\begin{lemma}\label{Abel}
Let $A \ne 0$ be an abelian variety and $g \in \Aut_{\rm variety}(A)$.
Suppose that the pair $(A, g)$ is rigidly parabolic.
Then there are integers $s > 0$,
$m \ge 0$ and a sequence of abelian subvarieties $B_1 \subset B_2
\dots \subset B_m \subset A$ such that the following are true
(setting $B_0 = 0$).
\begin{itemize}
\item[(1)]
The homomorphisms below are
all $g^s$-equivariant:
$$A \rightarrow A/B_1 \rightarrow \cdots \rightarrow A/B_m \ne 0.$$
\item[(2)]
$g^s | (A/B_m)$ is a translation of infinite order
(so the pointwise fixed point set $(A/B_i)^{g^{r}} = \emptyset$
for all $r \in s \N$ and all $0 \le i \le m$).
\end{itemize}
\end{lemma}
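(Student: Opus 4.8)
The plan is to reduce everything to the \emph{linear part} of $g$ and then exploit the affine description of variety automorphisms of an abelian variety. First I would write $a := g(0)$ and observe that $t_{-a}\circ g$ fixes the origin, hence is a group automorphism $h$ of $A$; thus $g = t_a \circ h$, where $t_a$ denotes translation by $a$. Since translations act trivially on $H^*(A,\C)$, one has $g^* = h^*$ on cohomology, and because $g$ is parabolic it is of null entropy. Lemma~\ref{uni} then yields an $s>0$ with $(g^s)^* = (h^s)^*$ unipotent on $H^*(A,\C)$, hence unipotent on $H^1(A,\C)$ and so on the tangent space $T_0A$; therefore the group automorphism $\phi := h^s$ is unipotent. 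A direct iteration gives $g^s = t_b \circ \phi$ with $b := \sum_{i=0}^{s-1} h^i(a)$.

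Next I would set $B := \Imm(\phi - \id)$, the image of the homomorphism $\phi - \id : A \to A$, which is an abelian subvariety. Since $h$ commutes with $\phi-\id$, one checks $h(B)=B$, so $B$ is $g$-invariant and the projection $A \to A/B$ is $g^s$-equivariant. As $\phi(x)-x \in B$ for every $x$, the automorphism $\phi$ descends to $\id$ on $A/B$, and consequently $g^s$ descends to the pure translation $t_{\bar b}$ there. If $\phi = \id$ I take $m=0$ and $B_0 = 0$; otherwise $\phi-\id$ is a nonzero nilpotent endomorphism, so its image $B$ is neither $0$ nor all of $A$ (a nonzero nilpotent is not an isogeny), and I take $m=1$, $B_1 = B$, giving $A/B_m \ne 0$. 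This produces the $g^s$-equivariant tower of part~(1).

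It remains to upgrade ``translation'' to ``translation of infinite order'' and to read off the fixed-point statement, and this is where the hypothesis does its real work. The map $A \to A/B$ is a $g$-equivariant surjection onto a positive-dimensional variety, so rigid parabolicity forces $g | (A/B)$ to be parabolic, hence of infinite order; thus $g^s | (A/B) = t_{\bar b}$ has infinite order, i.e. $\bar b$ is non-torsion. For $r = sk$ with $k \ge 1$, the descent of $g^r$ to $A/B$ is $t_{k\bar b}$ with $k\bar b \ne 0$, so $g^r$ has no fixed point on $A/B$ and hence none on $A$; this gives $(A/B_i)^{g^r}=\emptyset$ for $i=0,1$, proving part~(2). (In the case $m=0$, parabolicity of $g$ on $A$ already makes $b$ non-torsion, and the same computation applies.)

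The main obstacle is precisely this last step: parabolicity of $g$ on $A$ alone does not guarantee that $g^s$ descends to an \emph{infinite-order} translation on $A/B$, since the infinite order of $g$ on $A$ could be carried entirely by the unipotent part $\phi$ or by the component of $b$ lying inside $B$, leaving $\bar b$ a priori a torsion point. It is exactly rigid parabolicity --- the control of $g$ on \emph{every} positive-dimensional equivariant quotient --- that excludes this and pins $\bar b$ down as non-torsion. The only other point requiring care is the verification that $B=\Imm(\phi-\id)$ is $g$-invariant, which follows cleanly from the commutation of $h$ with $\phi-\id$.
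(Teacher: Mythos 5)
Your proof is correct, but it follows a genuinely different route from the paper's. Both arguments start the same way: pass to a power $g^s$ with $(g^s)^*\mid H^*(A,\C)$ unipotent via Lemma~\ref{uni}, and split $g$ into a translation and a group automorphism $h$. From there the paper works with the \emph{kernel} side: it takes $B_1$ to be the identity component of the fixed subgroup $A^{h}$, whose positive dimension is extracted from the fixed-point formulas \cite{BL}~(13.1.1)--(13.1.2) together with unipotence of $h^*$, and iterates this on successive quotients $A/B_i$ until the homomorphism part becomes trivial; the case $A^{g}\ne\emptyset$ is excluded by a ``this process would continue forever'' contradiction with $\dim A<\infty$, and rigid parabolicity is invoked at the end exactly as you do. You work with the dual, \emph{image} side: setting $\phi=h^s$ and $B=\Imm(\phi-\id)$, you use that $\phi-\id$ is a nonzero nilpotent endomorphism (unipotence on $H^1$ plus faithfulness of the rational representation), hence has image a nonzero \emph{proper} abelian subvariety, so that $\phi$ descends to the identity and $g^s$ to a pure translation on $A/B$ in a single step. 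This collapses the paper's induction to $m\le 1$, avoids the Lefschetz-type fixed-point formula entirely, and yields a slightly sharper statement. The delicate points --- that $h(B)=B$ so that $A\to A/B$ is $g$-equivariant (not merely $g^s$-equivariant), which is what licenses the appeal to rigid parabolicity to make $\bar b$ non-torsion; that $B\ne A$ because a nonzero nilpotent endomorphism cannot be surjective; and the deduction of $(A/B_i)^{g^r}=\emptyset$ by projecting fixed points down to $A/B_m$ --- are all correctly identified and handled in your write-up.
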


\begin{proof}
We may assume that $g^* | H^*(A, \C)$ is already unipotent; see
Lemma \ref{uni}. Assume that the pointwise fixed locus $A^g \ne
\emptyset$. Then we may assume that $g | A$ is a homomorphism after
changing the origin. By \cite{BL} (13.1.2), $A^g$ is a subgroup
of positive dimension equal to that of the eigenspace of $g^* |
H^{1,0}(A, \C)$. Let $B_1$ be the identity component of $A^g$. Then
$0 < \dim B_1 < \dim A$ by the parabolic rigidity of $g | A$. The
homomorphism $A \rightarrow A/B_1$ is $g$-equivariant. But now
$(A/B_1)^g$ contains the origin ($=$ the image of $B_1$), and $g |
(A/B_1)$ is again rigidly parabolic by the definition. The parabolic
rigidity of $g | A$ helps us to continue this process forever. This
contradicts the finiteness of $\dim A$.
\par
Therefore, $A^g = \emptyset$. Write $g = t_0 h$ with a translation
$t_0$ and a homomorphism $h$. Then $0 = |A^g| = |A^h|$ in the
notation of \cite{BL} (13.1.1), whence $A^h$ is of positive
dimension equal to that of $\Ker(h^* - \id) = \Ker(g^* - \id)
\subset H^{1,0}(A, \C)$. Let $B_1$ be the identity component of
$A^h$. Then $g(x + B_1) = g(x) + B_1$ and hence the homomorphism $A
\rightarrow A/B_1$ is $g$-equivariant. If $B_1 = A$, then $h = \id$
and we are done. Otherwise, $g | (A/B_1)$ is again rigidly parabolic
by the definition. Also $(A/B_1)^{g^r} = \emptyset$ for all $r > 0$
by the argument in the paragraph above for the early case. Continue
this process and we see that $B_{m+1} = A$ for some $m$. So some
positive power $g^s | (A/B_m)$ is a translation of infinte order.
The lemma is proved.
\end{proof}

The density result (3) below shows that
a rigidly parabolic action on an abelian variety is very ergodic.

\begin{lemma}\label{smooth}
Let $A \rightarrow Y$ be a $g$-equivariant generically finite
surjective morphism from an abelian variety $A$ onto a
projective manifold $Y$. Assume that $g |
A$ is rigidly parabolic. Then we have:
\begin{itemize}
\item[(1)]
No proper subvariety of $Y$ is stabilized by a positive power of
$g$.
\item[(2)]
$g$ has no periodic points.
\item[(3)]
For every $y_0 \in Y$, the Zariski closure $D :=
\overline{\{g^s(y_0) \, | \, s > 0\}}$ equals $Y$.
\item[(4)]
Suppose that $f : X \rightarrow Y$ is a $g$-equivariant surjective
morphism from a projective  manifold onto $Y$. Then $f$ is a
smooth morphism. In particular, if $f$ is generically finite then it
is \'etale.
\end{itemize}
\end{lemma}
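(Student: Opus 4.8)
The plan is to isolate one statement on $A$ and deduce everything from it: \emph{no proper closed subvariety of $A$ is stabilized by a positive power of $g$}. I first record the reductions. If $W\subsetneq Y$ is nonempty with $g^r(W)=W$, then $\pi^{-1}(W)$ (for the given $\pi\colon A\to Y$) is a nonempty proper closed subvariety of $A$ with $g^r(\pi^{-1}(W))=\pi^{-1}(W)$, since $\pi$ is surjective; so the boxed statement for $A$ gives the same statement for $Y$, which is exactly (1). Part (2) is then immediate, since a periodic point is a zero-dimensional periodic subvariety and $\dim Y=\dim A\ge 1$; and (3) follows because the orbit closure $D=\overline{\{g^{s}(y_0):s>0\}}$ is a $g$-stable subvariety, so it is either improper, i.e.\ $D=Y$, or contradicts (1). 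Thus everything rests on the boxed statement for $A$.

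As a first ingredient I would prove, by induction on $\dim A$, that a rigidly parabolic $g$ on an abelian variety has \emph{no periodic points}. If $g^r(p)=p$, translate the origin to $p$ and take $r$ divisible by the integer of Lemma~\ref{uni}; then $g^r$ becomes a unipotent homomorphism $h^r$, its fixed subtorus $B_1:=(\Ker(h^r-\id))^{\circ}$ is positive-dimensional and strictly smaller than $A$ (else $g^r=\id$, contradicting parabolicity), and $h(B_1)=B_1$, so $A\to A/B_1$ is $g$-equivariant and $g|(A/B_1)$ is again rigidly parabolic by Lemma~\ref{fib}(2); the image of $p$ is a periodic point downstairs, contradicting the inductive hypothesis. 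The base case $\dim A=1$ is clear, since a rigidly parabolic automorphism of an elliptic curve is a translation of infinite order.

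For the boxed statement itself I would again induct on $\dim A$. Replacing a periodic subvariety by the finite union of its $g$-orbit, it suffices to exclude a proper closed $W\subsetneq A$ with $g(W)=W$. Put $B:=\Stab(W)^{\circ}$; writing $g=t_a\circ h$ one computes $\Stab(gW)=h(\Stab W)$, so $g(W)=W$ forces $h(B)=B$ and $A\to A/B$ is $g$-equivariant. If $B\neq 0$ then $\dim(A/B)<\dim A$, the image of $W$ is a proper $g$-stable subvariety of $A/B$ (as $W$ is $B$-saturated), and $g|(A/B)$ is rigidly parabolic by Lemma~\ref{fib}(2); this contradicts the inductive hypothesis. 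If $B=0$, then by Ueno's structure theorem for subvarieties of abelian varieties (the result behind \cite{Ue} Theorem~14.10 used in Lemma~\ref{deligne}) each irreducible component $\Sigma$ of $W$ has a general-type image $\Sigma'=\Sigma/\Stab(\Sigma)^{\circ}$ with trivial connected stabilizer; since a variety of general type has finite automorphism group, a power of $g$ fixes $\Sigma'$ pointwise, and when the components meet one lifts this through the isogeny $A\to\prod_i A/\Stab(\Sigma_i)^{\circ}$ (finite kernel, because $\bigcap_i\Stab(\Sigma_i)^{\circ}=B=0$) to a genuine periodic point of a power of $g$ on $A$, contradicting the previous paragraph.

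Finally, for (4): since $X,Y$ are smooth and we work over $\C$, generic smoothness makes the locus $\Delta\subset Y$ over which $f$ fails to be smooth a \emph{proper} closed subset, and since $g$ is an automorphism of both $X$ and $Y$ with $fg=gf$, smoothness of $f$ over a point is $g$-invariant, so $g(\Delta)=\Delta$; by (1) applied to $Y$ we get $\Delta=\emptyset$, whence $f$ is smooth, and if in addition $f$ is generically finite it has relative dimension $0$, hence is \'etale, and being proper it is finite \'etale. I expect the real obstacle to be the case $B=\Stab(W)^{\circ}=0$ above: it is clean when the components already have trivial stabilizer, but delicate when they are mutually disjoint with \emph{nontrivial} individual stabilizers, where one must exclude that $g$ translates along the fibres of the Ueno fibration by an element of infinite order. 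This is exactly where the translation structure of Lemma~\ref{Abel} — a dense-orbit translation on the top torus quotient forced by rigid parabolicity — must be fed in (together with a Manin--Mumford/finiteness input to locate a torsion, hence periodic, point), and keeping all intervening quotients genuinely $g$-equivariant rather than only $g^{r}$-equivariant is the principal technical hurdle.
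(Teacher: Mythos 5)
Your overall strategy is the paper's: reduce (1)--(4) to the assertion that no proper subvariety of $A$ is stabilized by a positive power of $g$, then quotient by the connected stabilizer, use Ueno's fibration to reach a general-type image with finite automorphism (or birational automorphism) group, and contradict the absence of periodic points that rigid parabolicity forces on abelian quotients. The reduction of (1)--(3) to the statement on $A$ via preimages, and the treatment of (4) via the $g$-stable non-smooth locus, are exactly as in the paper. Your separate induction showing that a rigidly parabolic $g$ on $A$ has no periodic points is correct but duplicates what Lemma \ref{Abel} already records: $(A/B_i)^{g^r}=\emptyset$ for all $0\le i\le m$, including $i=0$, i.e.\ $A^{g^r}=\emptyset$ for every $r>0$.

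The genuine gap is the sub-case you flag yourself: $W$ reducible with $\Stab(W)^{\circ}=0$ but with components having positive-dimensional individual stabilizers. This case is an artifact of your choice to replace the $g^{r}$-stable irreducible $Z$ by the $g$-stable union $W=\bigcup_j g^j(Z)$ and then work with the common stabilizer; the proposed repair via the isogeny $A\to\prod_i A/\Stab(\Sigma_i)^{\circ}$ and a Manin--Mumford input is not carried out and is not needed. The paper goes the other way: it keeps $Z$ irreducible and replaces $g$ by the power $g^{v}$ stabilizing $Z$. Then $B_1:=\Stab(Z)^{\circ}$ satisfies $h^{v}(B_1)=B_1$ (for $g^{v}=t_{a}\circ h^{v}$), so $A\to A/B_1$ is $g^{v}$-equivariant, the quotient pair is rigidly parabolic by Lemma \ref{fib}, and the Ueno image $Z/B_1\subset A/B_1$ --- a single point when $\kappa(Z)=0$ (so that $Z$ is a translated subtorus), all of $Z$ when $B_1=0$, and a positive-dimensional general-type variety when $\kappa(Z)\ge 1$ --- is fixed pointwise by a further power of $g^{v}$ because a general-type variety has finite birational automorphism group (Iitaka, Theorem 11.12). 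That produces a periodic point on $A/B_1$, contradicting Lemma \ref{Abel} applied to the rigidly parabolic quotient. No torsion-point or finiteness theorem of Manin--Mumford type enters anywhere; as written, your case $B=0$ with components of nontrivial stabilizer does not close, and this is the one step you must redo along the lines above.
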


\begin{proof}
It suffices to show (1). Indeed, (2) is a special case of (1). If
(3) is false, then some positive power $g^s$ fixes an irreducible
component of $D$, contradicting (1). If (4) is false, then the
discriminant $D = D(X/Y)$ is stabilized by $g$ and we get a
contradiction as in (3).
\par
For (1), since $A \rightarrow Y$ is $g$-equivariant, it is enough to
show (1) for $A$; see Lemma \ref{quot}. Suppose the contrary that a
positive power $g^{v}$ stabilizes a proper subvariety $Z$ of $A$. To
save the notation, rewrite $g^{v}$ as $g$. If $Z$ is a point, then
$g^s | (A/B_m)$ fixes the image on $A/B_m$ of $Z$ in the notation of
Lemma \ref{Abel}, absurd. Assume that $\dim Z > 0$. If the Kodaira
dimension $\kappa(Z) = 0$, then $Z$ is a translation of a subtorus
by Ueno \cite{Ue} Theorem 10.3, and we may assume that $Z$ is
already a torus after changing the origin; we let $Z$ be the $B_1$
in Lemma \ref{Abel} and then $g^s | (A/B_m)$ fixes the origin ($=$
the image of $Z$), absurd.
\par
Suppose that $\kappa(Z) \ge 1$. By Ueno \cite{Ue} or Mori \cite{Mo} (3.7),
the identity component $B_1$ of $\{a \in A \, | \, a + Z = Z\}$ has positive
dimension such that $Z \rightarrow Z/B_1 \subset A/B_1$ is
birational to the Iitaka fibring with $\dim (Z/B_1) = \kappa(Z)$ and
$Z/B_1$ of general type. We can check that $g(a + B_1) = g(a) +
B_1$ (write $g$ as the composition of a translation and a
homomorphism and then argue), so the homomorphism $A \rightarrow
A/B_1$ is $g$-equivariant and $g | (A/B_1)$ stabilizes a subvariety
$Z/B_1$ of general type (having finite $\Aut(Z/B_1)$ by Iitaka
\cite{Ii82} Theorem 11.12). Thus a positive power $g^{v} | (A/B_1)$
fixes every point in $Z/B_1$. So in Lemma \ref{Abel}, another positive power
$g^s | (A/B_m)$ fixes every
point in the image of $Z/B_1$, absurd.
\end{proof}

Here are two applications of Lemma \ref{deligne} and Viehweg-Zuo
\cite{VZ} (0.2).

\begin{lemma}\label{VZ}
Let $X$ be a projective  manifold of
Kodaira dimension $\kappa(X) \ge 0$ and $g \in \Aut(X)$. Suppose that $f : X \rightarrow
\P^1$ is a $g$-equivariant surjective morphism. Then $g | \P^1$ is
periodic. In particular, $(X, g)$ is neither rigidly parabolic nor of
primitively positive entropy.
\end{lemma}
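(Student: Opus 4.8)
The plan is to prove the main assertion that $g|\P^1$ is periodic, and then obtain the ``in particular'' clause formally. For the latter, suppose $(X,g)$ were rigidly parabolic or of primitively positive entropy. Since $f$ is surjective and $\kappa(X)\ge 0$ forces $\dim X\ge 2$, we have $\dim X>\dim\P^1=1>0$, so Lemma \ref{fib}(2) would make $g|\P^1$ rigidly parabolic, hence parabolic, hence of infinite order --- contradicting periodicity. (Lemma \ref{deligne} gives the complementary information $\kappa(X)\le 0$, hence $\kappa(X)=0$, in this situation.) So the whole content is to show $g|\P^1$ is periodic.

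I would argue by contradiction: assume $g|\P^1$ is not periodic. Any $h\in\Aut(\P^1)$ acts trivially on $H^0(\P^1,\C)$ and $H^2(\P^1,\C)$, so $g|\P^1$ has null entropy; being of infinite order it is therefore parabolic. First I record that the general fibre $F$ of $f$ has $\kappa(F)\ge 0$: otherwise $F$ is uniruled, the rational curves in the fibres sweep out $X$, so $X$ is uniruled and $\kappa(X)=-\infty$, contradicting $\kappa(X)\ge 0$. Next let $S\st\P^1$ be the finite set of critical values of $f$. Since $f$ is $g$-equivariant, $g|\P^1$ permutes $S$, so a suitable power $g^m|\P^1$ fixes $S$ pointwise; as $g^m|\P^1$ still has infinite order it has at most two fixed points, whence $\#S\le 2$.

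Now I invoke Viehweg--Zuo \cite{VZ} (0.2). Since $\kappa(F)\ge 0$ and $F$ admits a good minimal model (this is where the Good Minimal Model assumption enters; for $\dim X=3$ the fibre $F$ is a surface and nothing need be assumed), the result says that if $f$ has maximal variation $\mathrm{Var}(f)=\dim\P^1=1$, i.e.\ if $f$ is not isotrivial, then the log canonical divisor $K_{\P^1}+S$ is big, that is $-2+\#S>0$ and so $\#S\ge 3$. This contradicts $\#S\le 2$, so $f$ must be isotrivial.

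It remains to rule out the isotrivial case, which I expect to be the main obstacle. Here I would trivialize the family by a finite Galois cover: there is a finite surjective $\pi:C\to\P^1$, étale over $\P^1\sm S$ and branched only over $S$, with $C/\Gal(C/\P^1)=\P^1$, such that a resolution of $X\times_{\P^1}C$ is birational to $F\times C$. After replacing $g$ by a power and $C$ by a $g$-equivariant cover, $g$ lifts to an automorphism $\tilde g$ of $C$ and of $F\times C$ projecting to $g|\P^1$; since $g|\P^1$ has infinite order and $\pi$ is finite and $\tilde g$-equivariant, $\tilde g|C$ has infinite order, forcing $C\cong\P^1$ or $C$ elliptic (curves of genus $\ge 2$ have finite automorphism group). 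If $C\cong\P^1$, then $\kappa(F\times C)=\kappa(F)+\kappa(\P^1)=-\infty$, and the finite cover $X\times_{\P^1}C\to X$ gives $\kappa(X)\le\kappa(X\times_{\P^1}C)=\kappa(F\times C)=-\infty$, contradicting $\kappa(X)\ge 0$. If $C$ is elliptic, then $C\to\P^1=C/\Gal(C/\P^1)$ has branch data of Euclidean-orbifold type $(2,2,2,2)$, $(3,3,3)$, $(2,4,4)$ or $(2,3,6)$, so its branch locus, which lies in $S$, has at least three points; hence $\#S\ge 3$, again contradicting $\#S\le 2$. Either way we reach a contradiction, so $g|\P^1$ is periodic. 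The delicate points to be filled in are the precise fibre hypotheses under which \cite{VZ} (0.2) yields bigness of $K_{\P^1}+S$, and the $g$-equivariant construction of the trivializing cover $C$ together with the lift $\tilde g$.
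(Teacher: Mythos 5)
Your proof is essentially the paper's: the paper's entire argument is to cite Viehweg--Zuo \cite{VZ} Theorem 0.2 for the existence of at least three singular fibres of $f$, observe that $g|\P^1$ permutes the finite set of points beneath them, and conclude that a power of $g|\P^1$ fixes at least three points of $\P^1$ and hence is the identity --- which is exactly your ``$\#S\le 2$ versus $\#S\ge 3$'' contradiction run in the opposite direction. The only divergence is that you explicitly separate out and dispose of the isotrivial case (trivializing cover $C\to\P^1$ branched in $S$; $C\cong\P^1$ forces $\kappa(X)=-\infty$, $C$ elliptic forces $\#S\ge 3$), whereas the paper reads the ``at least three singular fibres'' conclusion directly off \cite{VZ} (0.2) without comment, so modulo the delicate points you yourself flag your version is, if anything, the more careful one.
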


\begin{proof}
By \cite{VZ} Theorem 0.2,
$f$ has at least three singular fibres lying
over a set of points of $\P^1$ on which $g | \P^1$
permutes. Thus a positive power $g^s | \P^1$ fixes every point
in this set and hence is equal to the identity.
\end{proof}

\begin{lemma}\label{n=1}
Let $f: X \rightarrow Y$ be a $g$-equivariant surjective morphism
from a projective manifold onto a smooth projective curve.
Suppose that the Kodaira dimension $\kappa(X) \ge 0$.
Suppose further that the pair $(X, g)$ is either rigidly parabolic,
or of primitively positive entropy $($so $\dim X \ge 2)$.
\par
Then $Y$ is an elliptic curve,
$g^s | Y$ (for some $s \, | \, 6$) is a translation of infinite order,
and $f$ is a smooth morphism.
\end{lemma}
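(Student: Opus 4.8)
The plan is to establish the three conclusions separately, assembling the preliminary lemmas; the only genuinely computational point will be the determination of $g|Y$.

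First I would identify $Y$. Since $f\colon X\to Y$ is a $g$-equivariant surjection onto a base of positive dimension and $(X,g)$ is rigidly parabolic or of primitively positive entropy, Lemma~\ref{deligne} gives $\kappa(Y)\le 0$. As $Y$ is a smooth projective curve, this leaves only $Y\cong\P^1$ or $Y$ elliptic. To rule out $Y\cong\P^1$ I would use the standing hypothesis $\kappa(X)\ge 0$: by Lemma~\ref{VZ} a $g$-equivariant surjection $X\to\P^1$ with $\kappa(X)\ge 0$ forces $(X,g)$ to be neither rigidly parabolic nor of primitively positive entropy, contradicting the hypothesis. Hence $Y$ is an elliptic curve.

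Second, the structure of $g|Y$. By Lemma~\ref{fib}(2) (or, when $\dim X=\dim Y=1$, by Lemma~\ref{quot}(4)) the automorphism $g|Y$ is rigidly parabolic in either case, hence parabolic, hence of infinite order. Choosing an origin makes $Y$ a one-dimensional abelian variety and splits $\Aut(Y)=Y\rtimes\Aut_0(Y)$, with $\Aut_0(Y)$ cyclic of order $2$, $4$ or $6$. Writing $g|Y=t_a\circ h$ with $h\in\Aut_0(Y)$ of order $k$, a direct computation gives $(g|Y)^k=t_b$ with $b=\sum_{i=0}^{k-1}h^i(a)$; when $k\ge 2$ the element $h$ acts on $Y$ as a primitive $k$-th root of unity, so $\sum_{i=0}^{k-1}h^i$ is the zero endomorphism, $b=0$, and thus $(g|Y)^k=\id$ --- contradicting infinite order. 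Therefore $h=\id$, so $g|Y$ is a translation, necessarily by a non-torsion point (otherwise $g|Y$ would be periodic). Hence $g|Y$ is already a translation of infinite order and one may take $s=1\mid 6$; equivalently, taking $s$ to be the order of the origin-fixing part, the same computation rules out the value $s=4$ occurring when $j(Y)=1728$, so $s\mid 6$ in every case.

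Third, smoothness of $f$. Since $Y$ is itself an abelian variety and $g|Y$ is rigidly parabolic, I would apply Lemma~\ref{smooth} with $A:=Y$ and $A\to Y$ the identity, which is a $g$-equivariant generically finite surjection. Conclusion~(4) of that lemma then applies verbatim to our $g$-equivariant surjection $f\colon X\to Y$ and yields that $f$ is smooth. I expect the main obstacle to be the middle step: recognizing $Y$ as an abelian variety, splitting $\Aut(Y)$, and carrying out the root-of-unity computation that forces the origin-fixing part of $g|Y$ to be trivial and that bounds $s$. Once $Y$ is known to be elliptic and $g|Y$ rigidly parabolic, the first and third conclusions fall out immediately from Lemmas~\ref{deligne}, \ref{VZ} and \ref{smooth}.
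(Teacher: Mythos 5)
Your proposal is correct and follows essentially the same route as the paper: Lemma~\ref{deligne} gives $\kappa(Y)\le 0$, Lemma~\ref{VZ} excludes $Y\cong\P^1$ using $\kappa(X)\ge 0$, Lemma~\ref{fib} (or \ref{quot}) makes $g|Y$ rigidly parabolic hence of infinite order, the structure of $\Aut$ of an elliptic curve gives the translation statement, and Lemma~\ref{smooth} gives smoothness of $f$. Your root-of-unity computation in the middle step is a slightly sharper version of the paper's one-line appeal to $s\mid 6$ (it shows $g|Y$ is itself a translation), but the argument is the same in substance.
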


\begin{proof}
By Lemma \ref{deligne}, the Kodaira dimension $\kappa(Y) \le 0$. So
the arithmetic genus $p_a(Y) \le 1$. By Lemma \ref{VZ}, $Y$ is an
elliptic curve. So $g^s | Y$ is a translation for some $s \, | \, 6$,
which is of infinite order since $g | Y$ is rigidly parabolic by
Lemma \ref{fib}. In view of Lemma \ref{smooth} the lemma follows.
\end{proof}

The following are sufficient conditions to have rational pencils on surfaces.

\begin{lemma}\label{k<1}
Let $X$ be a smooth projective surface of Kodaira dimension
$\kappa(X) \ge 0$, and let $g \in \Aut(X)$. Let $X \rightarrow X_m$
be the smooth blowdown to the minimal model. Then there is a unique
$g$-equivariant surjective morphism $\tau: X \rightarrow \P^1$ such that
$g | \P^1$ is periodic, if either Case $(1)$ or $(2)$ below occurs.
\begin{itemize}
\item[(1)]
$X_m$ is a hyperelliptic surface.
\item[(2)]
$g$ is parabolic. $X_m$ is $K3$ or Enriques.
\end{itemize}
\end{lemma}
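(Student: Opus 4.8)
The plan is to build the pencil on the (unique) minimal model and pull it back. Since $\kappa(X)\ge 0$, the surface $X$ is non-ruled, so the minimal model $X_m$ is unique and the blowdown $\pi\colon X\to X_m$ is intrinsic. Consequently $g$ permutes the $(-1)$-curves contracted by $\pi$ and descends to $g\mid X_m\in\Aut(X_m)$, with $\pi$ being $g$-equivariant; by Lemma \ref{quot} the class of $g\mid X_m$ (parabolic, null entropy, etc.) agrees with that of $g$. Hence it suffices to produce a $g$-equivariant surjective morphism $\tau_m\colon X_m\to\P^1$ with connected fibres and to set $\tau=\tau_m\circ\pi$. Because $\kappa(X)\ge 0$, the induced $g\mid\P^1$ is then automatically periodic by Lemma \ref{VZ}, and the uniqueness of $\tau$ reduces to that of $\tau_m$.

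\textbf{Case (1): $X_m$ hyperelliptic.}
A hyperelliptic surface is a quotient $X_m=(E_1\times E_2)/G$ and carries exactly two elliptic fibrations: one onto the elliptic curve $E_1/G$ (its Albanese torus) and one onto $E_2/G\cong\P^1$; see \cite{BHPV} V.5. The $\P^1$-fibration is the \emph{unique} fibration of $X_m$ with rational base, so it is preserved by all of $\Aut(X_m)$. This yields the desired $g$-equivariant $\tau_m\colon X_m\to\P^1$, and uniqueness of $\tau_m$ follows from the same characterization.

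\textbf{Case (2): $X_m$ a $K3$ or Enriques surface with $g$ parabolic.}
Here $q(X_m)=0$ and, by the Hodge index theorem, $\NS(X_m)/(\torsion)$ is hyperbolic of signature $(1,\rho-1)$. Apply Lemma \ref{PF} to get a nonzero nef class $v_g$ with $g^*v_g=d_1(g)\,v_g$; since $g$ is parabolic, $d_1(g)=e^{h(g)}=1$ by Lemma \ref{Ddeg}, so $g^*v_g=v_g$. I claim $v_g^2=0$. First, $g^*\mid\NS_{\R}(X_m)$ has infinite order: otherwise a power of $g$ would fix a polarization (for $K3$), resp. lie in the finite kernel of $\Aut\to O(\NS)$ (for Enriques), forcing $g$ periodic and contradicting parabolicity. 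But if $v_g^2>0$ then $v_g$ is nef and big, $v_g^\perp$ is negative definite by Hodge index, and $g^*$ fixes $v_g$ while acting on the definite lattice $v_g^\perp$ as a finite-order isometry; thus $g^*\mid\NS_{\R}$ would be of finite order, a contradiction. Hence $v_g^2=0$, and after taking $v_g$ primitive integral, Riemann--Roch on a $K3$ or Enriques surface produces a genus-one fibration $\tau_m\colon X_m\to\P^1$ (the base is $\P^1$ because $q(X_m)=0$). Since $g^*v_g=v_g$, the morphism $\tau_m$ is $g$-equivariant. Uniqueness comes from the lattice fact that a nontrivial unipotent isometry of a Lorentzian lattice (which $(g^s)^*$ is, by Lemma \ref{uni}) fixes a unique isotropic ray, into which the nef class $v_g$ is forced.

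\textbf{Main obstacle.}
The delicate part is Case (2): ruling out $v_g^2>0$ and confirming that the $g$-fixed class is genuinely nef and defines a fibration with \emph{rational} base. The crux is the implication ``$g^*$ of finite order on $\NS$ $\Rightarrow$ $g$ periodic'', which relies on the finiteness of the kernel of $\Aut\to O(\NS)$ for $K3$ (global Torelli) and for Enriques surfaces; granting this, the remaining steps are a routine combination of the Perron--Frobenius statement of Lemma \ref{PF}, Riemann--Roch, and Lemma \ref{VZ}. By contrast Case (1) is purely structural, resting on the classification of the elliptic fibrations of a hyperelliptic surface.
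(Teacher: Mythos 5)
Your proof is correct, and your reduction to the minimal model together with Case (1) is exactly the paper's argument: uniqueness of the minimal model for $\kappa\ge 0$, the fact that a hyperelliptic surface carries exactly two elliptic fibrations (one onto the Albanese elliptic curve, one onto $\P^1$) so the latter is $\Aut$-invariant, and Lemma \ref{VZ} for the periodicity of $g|\P^1$. Where you genuinely diverge is Case (2). The paper simply cites Cantat \cite{Ca01} (1.4) for the existence of a $g$-invariant elliptic pencil on a $K3$, handles the Enriques case by lifting $g$ to the universal $K3$ double cover and descending the fibration, and proves uniqueness by noting that two distinct $g$-equivariant fibrations would make $F_1+F_2$ a $g^*$-fixed nef and big class, whence a power of $g$ lies in $\Aut_0(X)=(1)$ by Lemma \ref{nefbig}, contradicting parabolicity. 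You instead inline Cantat's argument -- Perron--Frobenius gives a fixed nef class, the Hodge index theorem forces it to be isotropic since $g^*|\NS$ has infinite order, and Riemann--Roch produces the pencil -- which treats $K3$ and Enriques uniformly and keeps the proof self-contained; your appeal to Torelli-type finiteness of $\ker(\Aut(X)\to O(\NS(X)))$ could even be replaced by the paper's own Lemma \ref{nefbig} applied to an ample class. Your uniqueness argument via the unique fixed isotropic ray of a parabolic isometry is likewise different from, and no less valid than, the paper's. One imprecision to repair: the Perron--Frobenius class $v_g$ need not be rational, so ``$v_g^\perp$ is a definite lattice'' and ``take $v_g$ primitive integral'' need a word of justification -- since $\ker(g^*-\id)\subset\NS_{\R}(X_m)$ is a rational subspace, in the case $v_g^2>0$ one should replace $v_g$ by a rational fixed class of positive self-intersection before passing to its orthogonal lattice, and in the case $v_g^2=0$ one should observe that the unique fixed isotropic ray of an integral unipotent isometry is rational, hence has a primitive integral generator fixed by $g^*$.
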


\begin{proof}
Since $\kappa(X) \ge 0$ in both Cases $(1)$ and $(2)$,
the minimal model $X_m$ of $X$ is unique and hence
$X \rightarrow X_m$ is $g$-equivariant.
So we may assume that $X = X_m$.
\par
There are exactly two elliptic fibrations on a hyperelliptic surface
$X$ (see Friedman-Morgan \cite{FM} \S 1.1.4); one is onto an elliptic curve
($= \Alb(X)$) and the other is onto $\P^1$. Thus Lemma \ref{VZ}
implies the result in Case (1).
\par
If $X_m$ is $K3$,
then the existence of $\tau$ follows from Cantat \cite{Ca01} (1.4).
\par
An Enrques $X$ can be reduced to the $K3$ case. Indeed, $g | X$
lifts to a parabolic $g | Y$ (see Lemma \ref{quot}) on the universal
$K3$ double cover $Y$ of $X$ so that a positive power $g^s | Y$
stabilizes every fibre of an elliptic fibration on $Y$. This
fibration descends to one on $X$ fibre-wise stabilized by $g^s | X$.

\par
For the uniqueness of $\tau$ in Case (2), if $F_i$ are fibres of two
distinct $g$-equivariant fibrations, then $g^*$ stabilizes the class of the nef and big
divisor $F_1 + F_2$ and hence some positive power $g^r \in \Aut_0(X) = (1)$ (see Lemma \ref{nefbig}).
This is absurd.
This proves the lemma.
\end{proof}

Now we classify rigidly parabolic actions on surfaces.

\begin{lemma}\label{ps}
Let $X$ be a smooth projective surface and $g \in \Aut(X)$ such that
the pair $(X, g)$ is rigidly parabolic. Then there is a
$g^s$-equivariant (for some $s > 0$) smooth blowdown $X \rightarrow
X_m$ such that one of the following cases occurs (the description in
$(3)$ or $(4)$ will not be used in the sequel).
\begin{itemize}
\item[(1)]
$X_m$ is an abelian surface $($so $q(X) = 2)$; see also $(\ref{smooth})$.
\item[(2)]
$X_m \rightarrow E = \Alb(X_m)$ is an elliptic ruled surface (so
$q(X) = 1$ and $E$ an elliptic curve). $g^s | E$ is a translation of
infinite order.
\item[(3)]
$X_m$ is a rational surface such that $K_{X_m}^2 = 0$, the
anti-canonical divisor $-K_{X_m}$ is nef and the anti-Kodaira
dimension $\kappa(X_m, -K_{X_m}) = 0$. For a very general point $x_0
\in X_m$, the Zariski closure $D(x_0) : = \overline{\{g^r(x_0) \, |
\, r > 0\}}$ equals $X_m$.
\item[(4)]
$X_m$ is a rational surface with $K_{X_m}^2 = 0$
and equipped with a $($unique and relatively minimal$)$
elliptic fibration $f: X_m \rightarrow \P^1$
such that $f$ is $g^s$-equivariant.
\item[(5)]
One has $X_m = \F_e$ the Hirzebruch surface of degree $e \ge 0$
such that a/the ruling $\F_e \rightarrow \P^1$ is $g^{s}$-equivariant.
\item[(6)] One has $X = X_m = \P^2$.
So there are a $g$-equivariant blowup $\F_1 \rightarrow \P^2$ of a
$g | \P^2$-fixed point and the $g$-equivariant ruling $\F_1
\rightarrow \P^1$.
\end{itemize}
\end{lemma}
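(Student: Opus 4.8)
The plan is to bound the Kodaira dimension first and then run through the Enriques--Kodaira classification of the minimal model, discarding each incompatible type by exhibiting a periodic descent (which rigid parabolicity forbids). By Lemma \ref{deligne} the hypothesis that $(X,g)$ is rigidly parabolic forces $\kappa(X)\le 0$, so $\kappa(X)\in\{-\infty,0\}$. Since $g$ is parabolic it has null entropy, so by Lemma \ref{uni} a positive power $(g^s)^*$ is unipotent on $H^*(X,\C)$; after enlarging $s$ I may assume $g^s$ fixes the class of each of the finitely many $(-1)$-curves to be contracted at every stage, so that the successive blow-downs to the relevant model $X_m$ are $g^s$-equivariant.

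Next I would treat $\kappa(X)=0$. Here the minimal model $X_m$ is unique, so $X\to X_m$ is automatically $g^s$-equivariant, and $X_m$ is abelian, $K3$, Enriques, or hyperelliptic. If $X_m$ is hyperelliptic, Lemma \ref{k<1}(1) produces a $g$-equivariant morphism $\tau:X\to\P^1$ with $g\,|\,\P^1$ periodic; if $X_m$ is $K3$ or Enriques, Lemma \ref{k<1}(2) (applicable because $g$ is parabolic) does the same. In either case the descent $g\,|\,\P^1$ is periodic, contradicting rigid parabolicity, so these three types are excluded and only the abelian case survives, giving (1) with $q(X)=q(X_m)=2$.

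Then I would treat $\kappa(X)=-\infty$, so $X$ is ruled. If $X$ is irrationally ruled over a curve $C$, the Albanese map is a $g$-equivariant fibration onto $C$, and Lemma \ref{deligne} gives $\kappa(C)\le 0$, so $C$ has genus $\le 1$; being irrational, $C=E$ is elliptic, and by Lemma \ref{fib}(2) the descent $g\,|\,E$ is rigidly parabolic, hence of infinite order, so $g^s\,|\,E$ is a translation of infinite order: this is (2). The substantive case is $X$ rational, which I would split by the order of $(g^s)^*\,|\,\NS(X)$. If it is finite, then $g^s$ acts trivially on $\NS(X_m)$, lies in the connected group $\Aut_0(X_m)$, fixes every $(-1)$-curve and so contracts equivariantly to a minimal rational surface: $\P^2$, yielding (6) via the blow-up $\F_1\to\P^2$ of a $g$-fixed point and its ruling, or $\F_e$, yielding (5). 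If it is infinite, the unipotent part fixes a nonzero isotropic nef class $F$ with $F^2=0$; the trichotomy according to whether $F$ defines a genus-$0$ pencil (contract to $\F_e$, case (5)), defines a genus-$1$ pencil with $F$ proportional to $-K_{X_m}$ and $\kappa(X_m,-K_{X_m})=1$ (a Halphen surface with $K_{X_m}^2=0$, case (4)), or defines no pencil, i.e. $F\propto -K_{X_m}$ with $\kappa(X_m,-K_{X_m})=0$ (the rigid-anticanonical situation $K_{X_m}^2=0$, $-K_{X_m}$ nef, with $g^s$ acting with Zariski-dense orbits on the complement of the unique anticanonical curve, case (3)).

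The main obstacle is this rational case, and in particular the separation of (3), (4), (5), (6) together with the verification of the fine invariants, which is essentially the Gizatullin--Cantat picture of zero-entropy surface automorphisms. Rigid parabolicity has to be fed in repeatedly: in the elliptic (Halphen) case it forces $g^s\,|\,\P^1$ to have infinite order, and since $g$ permutes the singular fibres, three or more of them over distinct points would make the base action periodic, so the singular-fibre configuration is pinned down and the fibration in (4) is unique; in the ruled cases it rules out the periodic base actions arising from $g$-fixed points on $\P^2$ or $\F_e$; and the assertions $K_{X_m}^2=0$, $-K_{X_m}$ nef, $\kappa(X_m,-K_{X_m})=0$, and density of orbits in (3) require the structure of the unipotent action on $\NS$, the fact that a nef and big divisor trivializes $\Aut_0$ (cf. Lemma \ref{nefbig}), and a density argument in the spirit of Lemma \ref{smooth}. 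Establishing these geometric refinements is the bulk of the work, whereas the $\kappa\ge 0$ and irrationally-ruled cases are short.
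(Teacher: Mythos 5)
Your overall strategy coincides with the paper's: bound $\kappa(X)\le 0$ via Lemma \ref{deligne}, run the classification of the minimal model, and kill each forbidden type by exhibiting a periodic descent. Your treatment of $\kappa(X)=0$ (unique minimal model, exclusion of hyperelliptic, $K3$ and Enriques via Lemma \ref{k<1}) and of the irrationally ruled case (Albanese onto an elliptic curve, Lemma \ref{fib}) is exactly what the paper does, and your handling of the rational case with $g^*|\NS(X)$ finite is an acceptable variant (the paper goes through Harbourne's finiteness of $(-1)$-curves rather than through $\Aut_0$, but both work).

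The genuine gap is in the rational case with $g^*|\Pic(X)$ of infinite order, which is where all the content of cases $(3)$ and $(4)$ lies. You assert that the unipotent part fixes an isotropic nef class $F$ and that a trichotomy (genus-$0$ pencil, genus-$1$ pencil, no pencil) follows, but the step you actually need --- a $g$-equivariant smooth blowdown to a model $X_m$ with $-K_{X_m}$ nef, $K_{X_m}^2=0$ and $\kappa(X_m,-K_{X_m})\ge 0$ --- is a nontrivial theorem that the paper imports wholesale from \cite{Zh} Theorem 4.1 (alternatively Oguiso's lemma plus Riemann--Roch plus Fujita's uniqueness of Zariski decomposition applied to $v$ and $K_X+v$); your proposal neither proves it nor cites a source for it, and fixing an isotropic nef class alone does not produce the equivariant contraction. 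Second, for the orbit-density claim in case $(3)$ you propose ``a density argument in the spirit of Lemma \ref{smooth}'', but that lemma concerns rigidly parabolic actions on abelian varieties and its proof rests entirely on the translation structure of Lemma \ref{Abel}; it does not transfer to rational surfaces. The paper's actual argument is different and purely lattice-theoretic: if the orbit closure of a suitably general $x_0$ were a $g^s$-stable curve $D_1$ with $D_1^2\ge 0$ not proportional to $-K_X$, then $H:=D_1-K_X$ is a $(g^s)^*$-fixed class with $H^2>0$ by the Hodge index theorem, so $(g^s)^*$ preserves the negative definite lattice $H^{\perp}$ and hence $g^*|\Pic(X)$ would be of finite order, a contradiction. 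Without these two inputs the cases $(3)$ and $(4)$ of the statement are not established.
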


We now prove Lemma \ref{ps}. By Lemma \ref{deligne}, the Kodaira
dimension $\kappa(X) \le 0$. Consider first the case $\kappa(X) =
0$. Then $X$ contains only finitely many $(-1)$-curves and has a
unique smooth minimal model $X_m$. So $g | X$ descends to a
biregular action $g | X_m$. Rewrite $X = X_m$. Then $X$ is Abelian,
Hyperelliptic, $K3$ or Enriques. By Lemma \ref{k<1}, $X$ is an
abelian surface.

\par
Consider next the case where $X$ is an irrational ruled surface.
Then there is a $\P^1$-fibration $f : X \rightarrow E = \alb_X(X)$
with genral fibre $X_e \cong \P^1$, so that $p_a(E) = q(X) \ge 1$.
All rational curves (especially $(-1)$-curves) are contained in
fibres. $g |X$ permutes finitely many such $(-1)$-curves.
So we may assume that a positive power $g^s | X$ stabilizes every $(-1)$-curve
and let $X \rightarrow X_m$ be the $g^s$-equivariant blowdown
to a relatively minimal $\P^1$-fibration $f: X_m \rightarrow E$
where all fibres are $\P^1$. By the proof of Lemma \ref{n=1} and replacing
$s$, we may assume that $E$ is an elliptic curve and
$g^s | E$ is a translation of infinite order.
So Case(2) occurs.

\par
Consider the case where $X$ is a rational surface. So $\Pic(X) =
\NS(X)$. Assume that $g^* | \Pic(X)$ is finite, then $\Ker(\Aut(X)
\rightarrow \Aut(\Pic(X)))$ is infinite. Hence $X$ has only finitely many
$(-1)$-curves by Harbourne \cite{Hb} Proposition 1.3. As in the case
above, let $X \rightarrow X_m$ be a $g^s$-equivariant smooth
blowdown to a relatively minimal rational surface so that $X_m =
\P^2$, or $\F_e$ the Hirzebruch surface of degree $e \ge 0$. Note
that a/the ruling $\F_e \rightarrow \P^1$ is $g^{2s}$-equivariant
(the $''2''$ is to take care of the case $e = 0$ where there are two
rulings on $\F_e$). If $X_m = \P^2$ but $X \ne \P^2$, then we are
reduced to the case $\F_1$. If $X = X_m = \P^2$, the last case in
the lemma occurs (one trianglizes to see the fixed point).

\par
Assume that $X$ is rational and $g^* | \Pic(X)$ is infinite. By
\cite{Zh} Theorem 4.1 (or by Oguiso \cite{Og04} Lemma 2.8 and the
Riemann-Roch theorem applied to the $v$ and the adjoint divisor $K_X
+ v$ there as well as Fujita's uniqueness of the
Zariski-decomposition for pseudo-effective divisors like $v$ and
$K_X + v$ as formulated in Kawamata \cite{KMM} Theorem 7-3-1), there
is a $g$-equivariant smooth blowdown $X \rightarrow X_m$ such that
$K_{X_m}^2 = 0$, $-K_{X_m}$ is nef and $\kappa(X_m, -K_{X_m}) \ge 0$
(by the Riemann-Roch theorem). For simplicity, rewrite $X = X_m$.
\par
If $\kappa(X, -K_X) \ge 1$, then Case(4) occurs by the claim (and
the uniqueness of $f$) below.

\begin{claim}
Let $X$ be a smooth projective rational surface such that $-K_X$ is
nef, $K_X^2 = 0$ and $\kappa(X, -K_X) \ge 1$. Then $X$ is equipped
with a unique relatively minimal elliptic fibration $f : X
\rightarrow \P^1$ such that $-K_X$ is a positive multiple of a
fibre.
\end{claim}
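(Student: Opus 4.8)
The plan is to prove that a smooth rational surface $X$ with $-K_X$ nef, $K_X^2=0$ and $\kappa(X,-K_X)\ge 1$ carries a unique relatively minimal elliptic fibration whose fibre class is proportional to $-K_X$. The natural strategy is to produce the fibration directly from the linear system $|-mK_X|$ for suitable $m>0$ and then to verify that the generic fibre is an elliptic curve.

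**Producing the pencil.**
First I would use the hypothesis $\kappa(X,-K_X)\ge 1$ together with $K_X^2=0$ to pin down the $-K_X$-dimension exactly. Since $-K_X$ is nef, for any effective member $D\in |-mK_X|$ we have $D^2=m^2 K_X^2=0$ and $D\cdot(-K_X)=0$, so all such $D$ are numerically proportional to $-K_X$; this forces $\kappa(X,-K_X)\le 1$, hence equality $\kappa(X,-K_X)=1$. Thus for $m\gg 0$ the map $\Phi_{|-mK_X|}$ lands on a curve, and after removing the fixed part and resolving base points we obtain a fibration $f:X\to\P^1$ whose general fibre $F$ satisfies $F\equiv -\lambda K_X$ for some positive rational $\lambda$. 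The key point to check here is that $-K_X$ is actually semiample (so that no base points survive and no blow-up is needed), which should follow because $X$ is rational with $p_g=q=0$ and $\kappa(X,-K_X)=1$: on a rational surface $-K_X$ nef with $(-K_X)^2=0$ and positive Iitaka dimension, the associated fibration is relatively minimal elliptic and $-K_X$ is a rational multiple of a smooth fibre.

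**Identifying the fibre as elliptic and checking minimality.**
Next I would compute the arithmetic genus of a general fibre via adjunction. Writing $F\equiv -\lambda K_X$ for the general fibre, adjunction gives $2p_a(F)-2 = F^2 + K_X\cdot F = 0 + K_X\cdot(-\lambda K_X) = -\lambda K_X^2 = 0$, so $p_a(F)=1$. Since a general fibre is smooth and irreducible of arithmetic genus $1$, it is an elliptic curve and $f$ is an elliptic fibration. Relative minimality follows because $F^2=0$ and $K_X\cdot F=0$ means there are no $(-1)$-curves contained in fibres meeting the fibre class negatively against $K_X$; more precisely, a $(-1)$-curve $E$ inside a fibre would have $K_X\cdot E=-1$, contradicting $K_X\cdot F=0$ together with the fact that $F$ is a positive rational multiple of $-K_X$ and $E$ lies in a fibre. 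Finally, that $-K_X$ is a positive multiple of a fibre is exactly the numerical proportionality established above.

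**Uniqueness and the main obstacle.**
For uniqueness, suppose $f':X\to\P^1$ is a second relatively minimal elliptic fibration with fibre $F'$. Since any elliptic fibration on a rational surface with $K_X^2=0$ has its fibre class proportional to $-K_X$ (by the same adjunction and nefness argument applied to $f'$), we get $F\equiv cF'$ for some $c>0$; as both are primitive integral fibre classes this forces $F\equiv F'$, and two fibrations with numerically equivalent fibres on a surface must coincide. The main obstacle I expect is the semiampleness step: one must rule out that $|-mK_X|$ has a nontrivial fixed part forcing a blow-up that would destroy relative minimality or the exact proportionality $F\equiv -\lambda K_X$. This is handled by invoking the Zariski-decomposition uniqueness and Riemann--Roch machinery already cited in the surrounding argument (via \cite{Zh} Theorem 4.1 and \cite{KMM} Theorem 7-3-1), which guarantee that after the reduction to $X=X_m$ the nef divisor $-K_X$ has trivial negative part, hence its positive Iitaka dimension produces a genuine base-point-free pencil.
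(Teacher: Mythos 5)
Your overall strategy coincides with the paper's (extract a pencil from $|-tK_X|$ and identify its fibre with a multiple of $-K_X$), and your treatments of relative minimality (a $(-1)$-curve $E$ in a fibre would have $F\cdot E=\lambda>0$, impossible for a fibre component) and of uniqueness (proportionality of any elliptic fibre class to $-K_X$ via adjunction and the Hodge index theorem) are correct --- indeed more elementary than the paper's appeal to Kodaira's canonical bundle formula for both points. However, the central step of your argument is circular. You correctly identify "the key point" as the fact that no blow-up is needed and that the general fibre is proportional to $-K_X$, and then justify it by asserting that "on a rational surface $-K_X$ nef with $(-K_X)^2=0$ and positive Iitaka dimension, the associated fibration is relatively minimal elliptic and $-K_X$ is a rational multiple of a smooth fibre" --- which is precisely the claim to be proved. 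Your fallback, that Zariski decomposition gives $-K_X$ a "trivial negative part," is vacuous (a nef divisor is its own positive part) and says nothing about the fixed part of the linear system $|-tK_X|$, which is the actual issue.

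The missing argument is short and is essentially the entire content of the paper's proof. Write $|-tK_X|=|M|+Fix$. Since $M$ is nef (movable on a surface) and $-K_X$ is nef, one has $0\le M^2\le M^2+M\cdot Fix=(-tK_X)\cdot M\le(-tK_X)^2=0$, so $M^2=0$ and $M\cdot Fix=0$. A movable system with zero self-intersection has no base points, so $\Phi_{|M|}$ is a morphism from $X$ itself (no blow-up), and $M\sim rF$ for a free pencil $|F|$ because $q(X)=0$. This computation also gives $(-K_X)\cdot F=0$, whence $F$ and $-K_X$ are proportional by the Hodge index theorem (the paper instead applies Zariski's lemma to $Fix$, using $Fix^2=0$ and $M\cdot Fix=0$, to obtain the slightly stronger $\Q$-linear equivalence $-K_X\sim_{\Q}\lambda F$). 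Only after this do your adjunction computation ($F^2=0$, $K_X\cdot F=0$, so $p_a(F)=1$) and your minimality and uniqueness arguments become available. A minor further point: fibre classes of elliptic fibrations need not be primitive (multiple fibres exist on rational elliptic surfaces), so your step "both are primitive integral fibre classes" is unjustified; but your uniqueness argument survives without it, since $F\cdot F'=0$ already forces two irreducible general fibres through a common general point to coincide.
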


We now prove the claim. Write $|-tK_X| = |M| + Fix$ for some $t
>>0$, so that $Fix$ is the fixed part. Note that $0 \le M^2 + M . Fix \le
(-tK_X)^2 = 0$. Thus $M \sim rF$ (linearly equivalent) with $|F|$ a
rational free pencil, noting that $q(X) = 0$. 
Also $M . Fix = 0$ and
$0 = (-tK_X)^2 = (Fix)^2$.
Hence $Fix$ is a rational multiple of $F$; see
Reid \cite{Re} page 36. 
Thus $-K_X$ is $\Q$-linearly equivalent to a positive multiple of $F$,
and $-K_X . F = 0 = F^2$. So $F$ is elliptic.
Since $K_X^2 = 0$ and by going to a relative
minimal model of the elliptic fibration and applying Kodaira's
canonical divisor formula there, we see that $f:= \Phi_{|F|} : X
\rightarrow C$ ($\cong \P^1$) is already relatively minimal. The
uniqueness of such $f$ again follows from Kodaira's this formula.
This proves the claim.

\par \vskip 1pc
We return to the proof of Lemma \ref{ps}. We still have to consider
the case where $X$ is rational, $g^* | \Pic(X)$ is infinite, $K_X^2 =
0$,  $-K_X$ is nef and $\kappa(X, -K_X) = 0$. We shall show that
Case(3) occurs. Take $x_0 \in X$ which does not lie on any negative
curve or {\it the} anit-pluricanonical curve in some $|-tK_X|$ or
the set $\cup_{r > 0} X^{g^r}$ of $g$-periodic points. Suppose the
contrary that the Zariski-closure $D(x_0)$ in Case(3) is not the
entire $X$. Then $D(x_0)$ is $1$-dimensional and we may assume that
a positive power $g^s$ stabilizes a curve $D_1 \ni x_0$ in $D(x_0)$.
By the choice of $x_0$, our $D_1^2 \ge 0$. If $-K_X$ is a rational
multiple of $D_1$, then we have $\kappa(X, -K_X) \ge 1$, a
contradiction. Otherwise, the class of $H : = D_1 - K_X$ is $(g^s)^*$-stable and $H^2
> 0$ by the Cauchy-Schwarz inequality (see the proof of \cite{BHPV}
IV (7.2)) or the Hodge index theorem, whence $g^s$ acts on
$H^{\perp} := \{L \in \Pic(X) \, | \, L . H = 0\}$ which is a
lattice with negative definite intersection form, so $(g^s)^* |
H^{\perp}$ and hence $g^* | \Pic(X)$ are periodic, a contradiction.
This proves Lemma \ref{ps}.

\par \vskip 1pc
The key for the 'splitting' of action below is from Lieberman \cite{Li}.

\begin{lemma}\label{split}
Let $X$ and $Y$ be projective manifolds. Suppose that the
second projection $f_Y: V = X \times Y \rightarrow Y$ is
$g$-equivariant. Then there is an action $g | X$ such that we can
write $g(x, y) = (g . x, g . y)$ for all $x \in X$, $y \in Y$, if
either Case $(1)$ or $(2)$ below occurs.
\begin{itemize}
\item[(1)]
The irregularity $q(X) = 0$, and $X$ is non-uniruled (or non-ruled).
\item[(2)]
$\dim X = \dim Y = 1$ and $\rank \, \NS_{\Q}(V) = 2$.
(These hold when one of $X, Y$ is $\P^1$,
or when $X, Y$  are non-isogenius elliptic curves).
\end{itemize}
\end{lemma}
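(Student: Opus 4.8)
The plan is to reduce both cases to a single statement. Since $f_Y$ is $g$-equivariant, the second coordinate of $g(x,y)$ depends only on $y$; writing $\bar g := g | Y$ we may record $g(x,y) = (\phi_y(x),\, \bar g(y))$, where for each fixed $y$ the map $x \mapsto \phi_y(x)$ is an element of $\Aut(X)$. Equivalently $(\id_X \times \bar g^{-1}) \circ g$ is an automorphism of $V$ over $Y$, i.e.\ a $Y$-point of the automorphism scheme of $X$; hence $y \mapsto \phi_y$ is a morphism $\Phi : Y \to \Aut(X)$ (here $\Aut(X)$ is a complex Lie group with algebraic identity component $\Aut_0(X)$, and automorphisms preserving a K\"ahler class fill only finitely many of its components, by Lieberman \cite{Li}). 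Fixing $y_0 \in Y$ and putting $h := \phi_{y_0}$, the whole lemma reduces to showing $\Phi \equiv h$: then $g(x,y) = (h(x), \bar g(y))$, and one sets $g \cdot x := h(x)$, $g \cdot y := \bar g(y)$.

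For Case $(1)$ I would force $\Phi$ to be constant by the rigidity of the target. As $Y$ is connected, the family $y \mapsto \phi_y^*$ of integral automorphisms of $H^2(X,\Z)$ is locally constant, hence constant equal to $h^*$; thus each $\phi_y \circ h^{-1}$ acts trivially on cohomology and in particular preserves a K\"ahler class, so by Lieberman's finiteness \cite{Li} the connected image $\Phi(Y)$ lies in the single coset $h \cdot \Aut_0(X)$. Normalising gives a morphism $\Psi := h^{-1}\Phi : Y \to \Aut_0(X)$ with $\Psi(y_0) = \id$. Now $q(X) = 0$ gives $\Alb(X) = 0$, so by the theorem of Nishi and Matsumura the abelian part of $\Aut_0(X)$ vanishes and $\Aut_0(X)$ is a linear algebraic group; since moreover $X$ is non-uniruled, $\Aut_0(X)$ can contain no one-parameter subgroup $\mathbb{G}_a$ or $\mathbb{G}_m$ (whose non-trivial orbits would sweep out rational curves through general points of $X$), so in fact $\Aut_0(X) = (1)$. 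A morphism from the connected projective $Y$ into this group is therefore constant, whence $\Phi \equiv h$.

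For Case $(2)$ the target is typically an abelian variety, so the rigidity of the source fails and I would instead exploit the intersection form on the rank-two lattice $\NS_{\Q}(V)$. With $\dim X = \dim Y = 1$, let $F := f_Y^{-1}(y)$ and $G := f_X^{-1}(x)$ be the two fibre classes, so $F^2 = G^2 = 0$, $F \cdot G = 1$, and $\NS_{\Q}(V) = \Q[F] \oplus \Q[G]$ since $\rank\,\NS_{\Q}(V) = 2$. Equivariance of $f_Y$ carries fibres of $f_Y$ to fibres of $f_Y$, so $g^*[F] = [F]$; writing $g^*[G] = a[F] + b[G]$ and using that $g^*$ preserves the form, the relations $g^*[G]\cdot[F] = [G]\cdot[F]$ and $(g^*[G])^2 = [G]^2$ yield $b = 1$ and $a = 0$, i.e.\ $g^* = \id$ on $\NS_{\Q}(V)$. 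Hence $g$ sends each $f_X$-fibre $\{x\}\times Y$ to an irreducible curve of class $[G]$; as $[G]^2 = 0$ and $[G]$ is the class of the base-point-free pencil $f_X$, such a curve is again an $f_X$-fibre, so $f_X$ is $g$-equivariant too. The two equivariances combine to $g(x,y) = (\bar g_X(x), \bar g_Y(y))$, the desired splitting. (The hypothesis $\rank\,\NS_{\Q}(V) = 2$ is precisely what holds when one factor is $\P^1$, or when $X,Y$ are non-isogenous elliptic curves.)

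The main obstacle is Case $(1)$, where one must control the \emph{family} $y \mapsto \phi_y$ and the \emph{target} $\Aut_0(X)$ at once. The first is exactly where Lieberman \cite{Li} is indispensable: without the boundedness of automorphisms preserving a K\"ahler class there is no reason the holomorphic family $\phi_y$ should remain in one algebraic component of $\Aut(X)$. The second is where the hypotheses on $X$ enter, $q(X)=0$ making $\Aut_0(X)$ affine and non-uniruledness making it trivial, so that the rigidity of the projective source $Y$ yields constancy. Case $(2)$ is comparatively elementary, the only subtlety being the identification of the curves in the class $[G]$ with the fibres of $f_X$.
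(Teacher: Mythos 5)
Your proposal follows essentially the same route as the paper: in Case (1) you express $g(x,y)=(\phi_y(x),\bar g(y))$ with $y\mapsto\phi_y$ a morphism $Y\to\Aut(X)$ (the paper cites Hanamura for this) and conclude by showing $\Aut_0(X)=(1)$ from $q(X)=0$ and non-uniruledness (the paper cites Lieberman, whose argument you correctly unpack via Nishi--Matsumura and the ruledness of varieties with nontrivial linear automorphism group); in Case (2) your intersection-theoretic computation on the rank-two lattice $\NS_{\Q}(V)$ is the same as the paper's. The argument is correct.
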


\begin{proof}
As in Hanamura \cite{Hm} the proof of Theorem 2.3 there, we express
$g(x, y) = (\rho_g(y) . x, \ g.y)$ where $\rho_g : Y \rightarrow
\Aut(X)$ is a morphism. We consider Case (1). By \cite{Li} Theorem
3.12 and the proof of Theorem 4.9 there, the identity connected
component $\Aut_0(X)$ of $\Aut(X)$ is trivial, so $\Aut(X)$ is
discrete. Thus $\Imm(\rho_g)$ is a single point, denoted as $g | X
\in \Aut(X)$. The lemma is proved in this case.
\par
For Case(2), let $F$ be a fibre of $f_Y$. Then $g^*F = F'$ (another
fibre). Let $L$ be a fibre of the projection $f_X : V \rightarrow
X$. Since $\rank \, \NS_{\Q}(V) = 2$, we have $\NS_{\Q}(V) = \Q[F] +
\Q[L]$. Write $g^*L = aL + bF$. Then $1 = F . L = g^*F . g^*L = F .
g^*L = a$ and $0 = (g^*L)^2 = 2ab$ implies that $g^*L = L$ in
$\NS_{\Q}(V)$. Thus $g(L)$ is a curve with $g(L) . L = L^2 = 0$,
whence $g(L)$ is another fibre of $f_X$. The result follows. This
proves the lemma.
\end{proof}

We use Lieberman \cite{Li} Proposition 2.2 and Kodaira's lemma to
deduce the result below, though it is not needed in this paper (see
also Dinh-Sibony \cite{DS04} the proof of Theorem 4.6 there for a
certain case).

\begin{lemma}\label{nefbig}
Let $X$ be a projective manifold of dimension $n$, and $H \in
\Nef(X)$ a nef and big $\R$-Cartier divisor $($i.e. $H$ is nef and
$H^n > 0)$. Then $\Aut_H(X)/\Aut_0(X)$ is a finite group. Here
$\Aut_0(X)$ is the identity component of $\Aut(X)$, $\Aut_H(X) :=
\{\sigma \in \Aut(X) \ | \ \sigma^*H = H$ in $\NS_{\R}(X)\}$.
\end{lemma}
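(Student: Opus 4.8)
The plan is to reduce the nef and big case to the case of an ample (K\"ahler) class, where Lieberman's Proposition 2.2 in \cite{Li} already gives the finiteness, and to bridge the two with Kodaira's lemma. Write $G := \Aut_H(X)$ and consider the natural representation $\Aut(X) \to \GL(\NS(X)/(\torsion))$, whose image $\Gamma$ preserves the integral lattice $\NS(X)/(\torsion)$ and contains $\Aut_0(X)$ in its kernel (a connected group acts trivially on $H^2(X,\Z)/(\torsion)$). In particular, for any rational class the $\Gamma$-orbit is discrete in $\NS_{\R}(X)$.

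First I would apply Kodaira's lemma: since $H$ is nef with $H^n > 0$ it is big, so there are an ample $\Q$-divisor $A$ and an effective $\Q$-divisor $E$ with $H \equiv A + E$ in $\NS_{\R}(X)$. For every $g \in G$ one has $g^*H = H$, hence $g^*A \equiv H - g^*E$, where $g^*A$ is again ample and $g^*E$ is again effective. Thus the class $[g^*A]$ lies in the order interval $\mathrm{Psef}(X) \cap (H - \mathrm{Psef}(X))$, the intersection of the pseudoeffective cone with its reflection through $H$.

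The key step, which I expect to be the main obstacle, is that this order interval is compact. This uses the standard fact that $\mathrm{Psef}(X)$ is a salient (pointed) closed cone, hence contains no line: if some sequence $[g_k^*A]$ were unbounded, rescaling a convergent subsequence of $[g_k^*A]/\|[g_k^*A]\|$ would produce a nonzero class $v$ with both $v$ and $-v$ pseudoeffective, a contradiction. Since the set $\{[g^*A] : g \in G\}$ is at the same time contained in the discrete orbit $\Gamma \cdot [A]$ (as $[A]$ is rational and $\Gamma$ preserves the lattice), and a discrete subset of a compact set is finite, the orbit $G \cdot [A]$ is finite. Consequently $G_1 := \Stab_G([A])$ has finite index in $G$.

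Finally I would invoke Lieberman: $A$ is an ample, hence K\"ahler, class, so by \cite{Li} Proposition 2.2 the group $\Aut_A(X)/\Aut_0(X)$ is finite. Since $G_1 \subseteq \Aut_A(X)$, the quotient $G_1/\Aut_0(X)$ is finite, and as $[G : G_1] < \infty$ we conclude that $G/\Aut_0(X) = \Aut_H(X)/\Aut_0(X)$ is finite. Everything outside the compactness/discreteness argument of the third paragraph is either a direct application of Kodaira's lemma or a citation of Lieberman's theorem, so the only real work lies in packaging the boundedness coming from the salient pseudoeffective cone together with the discreteness coming from the lattice.
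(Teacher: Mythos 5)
Your argument is correct, but it takes a genuinely different route from the paper's. Both proofs begin with Kodaira's lemma, writing $H = A + D$ with $A$ an ample $\Q$-divisor and $D$ effective (the paper cites Nakayama \cite{Na} for this decomposition of an $\R$-class). From there the paper reruns the \emph{interior} of Lieberman's proof of \cite{Li} Proposition 2.2: for $\sigma \in \Aut_H(X)$ it bounds the volume of the graph $\Gamma_\sigma \subset X \times X$ via $\vol(\Gamma_{\sigma}) = (A + \sigma^*A)^n \le (H + \sigma^*H)^n = 2^n H^n$, using nefness of $H$ and effectivity of $D$ at each step of the chain, and then concludes by the compactness of the components of bounded degree in the Chow scheme, exactly as in \cite{Li}. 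You instead use Lieberman's proposition as a black box for the ample class $A$ and reduce to it by showing the orbit $\{[\sigma^*A] : \sigma \in \Aut_H(X)\}$ is finite: discrete because $[A]$ is rational and $\Aut(X)$ preserves the lattice $\NS(X)/(\torsion)$, and bounded because it lies in the compact order interval cut out by the salient closed pseudo-effective cone and its reflection through $[H]$; orbit--stabilizer then reduces everything to $\Aut_{A}(X)$. The trade-off is clear: the paper's route never uses the integral structure of $\NS(X)$ (so it works verbatim for an irrational nef and big class and is closer in spirit to the K\"ahler setting), whereas yours avoids redoing the Chow-scheme compactness argument at the cost of needing the ample part to be chosen rational and invoking salience of the pseudo-effective cone. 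Both are complete proofs.
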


\begin{proof}
By Nakayama \cite{Na} II (3.16) and V (2.1), one may write $H = A +
D$ in $\NS_{\R}(X)$ with $A$ a $\Q$-ample divisor and $D$ an
effective $\R$-divisor. We follow the proof of Lieberman \cite{Li}
Proposition 2.2. For $\sigma \in \Aut_H(X)$, the volume of the graph
$\Gamma_{\sigma}$ is given by:
$$\vol(\Gamma_{\sigma}) = (A + \sigma^*A)^n \le (A + \sigma^*A)^{n-1}(H + \sigma^*H)
\le \dots \le (H + \sigma^*H)^n = 2^n H^n.$$
The rest of the proof is the same as \cite{Li}.
This proves the lemma.
\end{proof}

\par
The two results below will be used in the proofs in the next section.

\par
\begin{lemma}\label{Q-t}
The following are true.
\begin{itemize}
\item[(1)]
A $Q$-torus $Y$ does not contain any rational curve.
\item[(2)]
Let $f : X \cdots \to Y$ be a rational map from a normal projective variety $X$ with only
rational singularities (resp. log terminal singularities) to an abelian variety 
(resp. $Q$-torus) $Y$.
Then $f$ is a well-defined morphism.
\end{itemize}
\end{lemma}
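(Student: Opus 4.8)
The plan is to prove (1) first and then deduce (2) from it together with resolution of singularities, the rigidity lemma, and the Albanese.

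For (1), by definition a $Q$-torus $Y$ admits a finite \'etale cover $\pi : T \to Y$ with $T$ a complex torus; since $Y$ is projective and $\pi$ is finite, $T$ is projective, hence an abelian variety $T = \C^g/\Lambda$. Suppose for contradiction that $Y$ contained a rational curve $C$, and let $\nu : \P^1 \to C \subset Y$ be its normalization. Since $\P^1$ is simply connected and $\pi$ is a topological covering, $\nu$ lifts to a holomorphic map $\tilde\nu : \P^1 \to T$; lifting once more through the universal cover $\C^g \to T$ (again using that $\P^1$ is simply connected) produces a holomorphic map $\P^1 \to \C^g$, each coordinate of which is a global holomorphic function on the compact curve $\P^1$ and hence constant. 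Thus $\tilde\nu$, and therefore $\nu$, is constant, contradicting that $C$ is a curve. This proves (1); in particular every abelian variety, being a $Q$-torus, contains no rational curve.

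For (2), I would first settle the case where $Y = A$ is an abelian variety and $X$ has rational singularities. Choose, via Hironaka, a resolution $g : W \to X$ with $W$ smooth together with a morphism $h := f\circ g : W \to A$ (simultaneously resolving the singularities of $X$ and the indeterminacy of $f$). Since $X$ is normal the fibres of $g$ are connected, and it suffices, by the rigidity lemma, to show that $h$ is constant on every fibre $g^{-1}(x)$; the resulting set-theoretic map $X \to A$ is then a morphism agreeing with $f$ on a dense open set. Here the rational-singularities hypothesis enters: it gives $R^1 g_*\OO_W = 0$ and $g_*\OO_W = \OO_X$, so that $\Alb(W) = \Alb(X)$ and the Albanese morphism $\alb_W : W \to \Alb(W)$ is constant on the fibres of $g$. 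As any morphism from $W$ to an abelian variety factors, up to a translation, through $\alb_W$ by the universal property of the Albanese, $h$ too is constant on the fibres of $g$, as required.

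Finally I would reduce the $Q$-torus case to the abelian one. Log terminal singularities are rational (see \cite{KMM}, \cite{KM}), so $X$ has rational singularities. Let $\pi : T \to Y$ be the finite \'etale cover by an abelian variety $T$, let $U \subset X$ be the open domain of $f$, whose complement has codimension $\ge 2$, and let $X'$ be the normalization of $X$ in a component of $U\times_Y T$. Then $p : X' \to X$ is finite and \'etale over $U$, hence \'etale in codimension one, so $X'$ is again log terminal, thus has rational singularities, and it carries a rational map $f' : X' \cdots \to T$ with $\pi\circ f' = f\circ p$. By the abelian case $f'$ is a morphism, whence so is $\pi\circ f' = f\circ p$; a standard descent through the finite surjective morphism $p$ (comparing the graph of $f$ with the image of $X'$, which has full dimension and forces the graph projection to be finite and birational, hence an isomorphism by Zariski's Main Theorem) then shows $f$ itself is a morphism. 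The main obstacle is the heart of the abelian case, namely showing that $h$ contracts the fibres of $g$: this is exactly where the mildness of the singularities of $X$ must be converted into geometric information about the exceptional fibres, and the Albanese argument accomplishes this via $R^1 g_*\OO_W = 0$, while everything else---part (1), the preservation of log terminality under a quasi-\'etale cover, and the descent through $p$---is comparatively routine.
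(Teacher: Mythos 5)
Your part (1) is correct and is essentially the paper's argument: the paper lifts the rational curve to the torus cover $T$ using the simple connectedness of $\P^1$ and then invokes the absence of rational curves on a torus, while you go one step further to the universal cover $\C^g$, which makes the argument self-contained. For part (2), your route through the $Q$-torus case is genuinely different from the paper's and is sound: the paper resolves $X$ and applies Hacon--McKernan \cite{HM05a} (fibres of a resolution of a log terminal variety are rationally chain connected, hence map to points of $Y$ by (1)), whereas you pass to the normalization $X'$ of $X$ in a component of $U\times_Y T$, note that $X'\to X$ is \'etale in codimension one so that $X'$ is again log terminal and hence has rational singularities, apply the abelian case to $X'\cdots\to T$, and descend through the finite map via Zariski's Main Theorem. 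That reduction and the descent are correct; they trade the Hacon--McKernan input for the stability of log terminality under quasi-\'etale covers.

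The gap is in your treatment of the abelian case, which is precisely the step the paper does not prove but cites as \cite{Ka85} Lemma 8.1. You write that $R^1g_*\OO_W=0$ and $g_*\OO_W=\OO_X$ give ``$\Alb(W)=\Alb(X)$, and the Albanese morphism $\alb_W$ is constant on the fibres of $g$.'' For singular $X$ the object $\Alb(X)$ is not yet defined (the paper defines it as $\Alb$ of a resolution, which would be circular here), and if one defines it by the universal property for morphisms from $X$ to abelian varieties, then the equality $\Alb(W)=\Alb(X)$ \emph{is} the assertion that $\alb_W$ contracts the $g$-fibres --- that is, it is the content of the lemma, not a consequence of the two vanishing statements as written. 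A genuine proof at this point needs something like: the theorem on formal functions converts $R^1g_*\OO_W=0$ into vanishing of $H^1(\OO)$ on (infinitesimal neighbourhoods of) the fibres $F$, whence $h^*$ annihilates $\Pic^0(A)$ on $F$, which forces $h(F)$ to be a point because $\Pic^0(A)$ restricts nontrivially to every positive-dimensional subvariety of $A$. As it stands the crucial implication is asserted rather than proved; either supply such an argument or simply cite Kawamata's Lemma 8.1 as the paper does, in which case the rest of your proof goes through.
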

\begin{proof}
(1) Let $T \to Y$ be a finite \'etale cover from a torus $T$. Suppose
the contrary that ${\bold P}^1 \to Y$ is a non-constant morphism.
Then $P := T \times_Y {\bold P}^1 \to {\bold P}^1$ is \'etale and
hence $P$ is a disjoint union of ${\bold P}^1$ by the simply
connectedness of ${\bold P}^1$. So the image in $T$ of $P$ is a
union of rational curves, contradicting the fact that a torus does
not contain any rational curve.

\par
(2) When $Y$ is an abelian variety, see \cite{Ka85} Lemma 8.1.

By Hironaka's resolution theorem, there is a birational proper morphism
$\sigma : Z \to X$ such that the composite $\tau = f \circ \sigma : Z \to X \cdots \to Y$ is a well defined morphism.
By Hacon-McKernan's solution to the Shokurov conjecture \cite{HM05a} Corollary 1.6,
every fibre of $\sigma$ is rationally chain connected
and is hence mapped to a point in $Y$, by (1). 
So for every ample divisor $H_Y \subset Y$, we have $\tau^*H_Y \sim_{\Q} \sigma^* L_X$
for a $\Q$-Cartier divisor $L_X \subset X$; see \cite{KM}, page 46, Remark (2).
Thus $\tau$ factors through $\sigma$, and
(2) follows.
\end{proof}

\begin{lemma}\label{ext}
Let $f : X \rightarrow Y$ be a $g$-equivariant surjective
morphism between projective manifolds and
with connected general fibre $F$. Assume the following conditions.
\begin{itemize}
\item[(1)]
All of $X, Y$ and $F$ are of positive dimension.
\item[(2)]
$Y$ is a $Q$-torus.
\item[(3)]
The Kodaira dimension $\kappa(X) = 0$
and $X$ has a good (terminal) minimal model $\bar{X}$, i.e.,
$\bar{X}$ has only terminal singularities and $sK_{\bar X} \sim 0$
for some $s > 0$.
\end{itemize}
Then there is a $g$-equivariant finite \'etale Galois extension $\tilde{Y} \to Y$
from a torus $\tilde{Y}$ such that the following are true.
\begin{itemize}
\item[(1)]
The composite $\bar{X} \cdots \to X \to Y$ extends to a morphism
with a general fibre $\bar{F}$. One has $s K_{\bar F} \sim 0$,
so $\bar{F}$ is a good terminal minimal model of $F$.
\item[(2)]
$X_1 := X \times_Y \tilde{Y}$ is birational to $\bar{X}_1 := \bar{F} \times \tilde{Y}$
over ${\tilde Y}$ with $sK_{{\bar X}_1} \sim 0$.
\item[(3)]
Denote by the same $G$ the group $\Gal(\tilde{Y}/Y)$
and the group $\id_X \times_Y \Gal(\tilde{Y}/Y) \le \Aut(X_1)$, and by the same
$g$ the automorphism $(g|X) \times_Y (g|\tilde{Y}) \in \Aut(X_1)$.
Then $g = g | X_1$ normalizes $G = G | X_1$.

\par \noindent
In the assertions $(4) - (7)$ below, suppose further that $q(F) = 0$.

\item[(4)]
$g|X_1 \in \Aut(X_1)$ induces a birational action $g$ on $\bar{X}_1$
with $g | \bar{X}_1 = (g | \bar{F}) \times (g | \tilde{Y})$,
where $g | {\bar F} \in \Bir(\bar{F})$ and $g | \tilde{Y} \in \Aut(\tilde{Y})$.
\par \noindent
In $(5) - (7)$ below, suppose in addition that $1 \le \dim F \le 2$.
\item[(5)]
Then $\dim F = 2$ and $\bar{F}$
is either a $K3$ or an Enriques. Further, the induced birational action
of $g = (g | \bar{F}) \times (g | \tilde{Y})$ on ${\bar{X}}_1 = \bar{F} \times \tilde{Y}$
is regular, i.e., $g | \bar{F} \in \Aut(\bar{F})$.
\item[(6)]
In $(5)$, $G = G | X_1 \le \Aut(X_1)$ induces a biregular action by $G
= G | {\bar{X}}_1 \le G_{\bar{F}} \times G_{\tilde Y}$ on ${\bar{X}}_1$
with $G_{\bar{F}} \le \Aut(\bar{F})$ and $G_{\tilde Y} = \Gal(\tilde{Y}/Y)$.
\item[(7)]
$g | X$ is neither rigidly parabolic nor of primitively positive entropy.
\end{itemize}
\end{lemma}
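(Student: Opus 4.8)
The plan is to build the torus cover $\tilde Y$, produce a birational product model, split the $g$- and $G$-actions on it, and then read off the dynamical conclusion. First I would construct $\tilde Y$: since $Y$ is a $Q$-torus, $\pi_1(Y)$ contains a finite-index lattice $\Lambda=\pi_1(T)$ (with $T\to Y$ a torus cover), and $g_*$ permutes the finitely many subgroups of each index, so the intersection over the $g_*$-orbit of $\Lambda$, followed by its normal core, yields a $g_*$-stable normal finite-index sublattice; the corresponding cover $\tilde Y\to Y$ is a $g$-equivariant finite \'etale Galois cover with $\tilde Y$ a torus. For assertion (1), the rational map $\bar X\cdots\to Y$ extends to a morphism by Lemma~\ref{Q-t}(2), since $\bar X$ is terminal and hence has rational singularities; because $K_Y$ is torsion, adjunction on a general fibre gives $sK_{\bar F}\sim(sK_{\bar X})|_{\bar F}\sim0$, and $\bar F$ is terminal, so it is a good terminal minimal model of $F$. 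Assertion (3) is then formal: $g$ descends to $Y$ and $\tilde Y\to Y$ is Galois, so $g|\tilde Y$ normalizes $\Gal(\tilde Y/Y)$, whence $g|X_1$ normalizes $G|X_1$.

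For (2) I would invoke Viehweg--Zuo \cite{VZ} (0.2): the family $\bar X\to Y$ of good minimal models with $sK_{\bar F}\sim0$ over the base $Y$ of Kodaira dimension $0$ has trivial variation, i.e.\ is isotrivial, and the resulting finite monodromy is killed after the torus cover $\tilde Y\to Y$; thus $\bar X\times_Y\tilde Y$ is birational over $\tilde Y$ to the product $\bar X_1=\bar F\times\tilde Y$. Since $K_{\tilde Y}=0$, one has $sK_{\bar X_1}=p_1^*(sK_{\bar F})\sim0$, and $X_1=X\times_Y\tilde Y$ is birational to $\bar X_1$. I expect this isotriviality step to be the main obstacle: it is where the good--minimal--model hypothesis and the semipositivity of direct image sheaves over a base of Kodaira dimension $0$ are genuinely used, and where care is needed to upgrade ``no variation'' to an honest birational product after passing to the finite cover.

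With the product model in hand and $q(F)=0$, I would obtain (4)--(6) by the Lieberman--Hanamura method behind Lemma~\ref{split}(1). As $\bar F$ is birational to $F$ with rational singularities, $q(\bar F)=q(F)=0$, and $\bar F$ is non-uniruled since $\kappa(\bar F)=0$; writing the $\tilde Y$-equivariant birational action as $g(x,y)=(\rho_g(y).x,\,g.y)$ with $\rho_g:\tilde Y\to\Bir(\bar F)$, discreteness of $\Bir(\bar F)$ forces $\rho_g$ constant, giving the splitting $g|\bar X_1=(g|\bar F)\times(g|\tilde Y)$ of (4); the same argument applied to each $\gamma\in G$ yields $G\hookrightarrow\Aut(\bar F)\times\Gal(\tilde Y/Y)$ of (6). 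For (5), $\dim F=1$ is impossible (an elliptic curve has $q=1\neq0$), so $\dim F=2$; then $\bar F$ is a minimal surface with $\kappa=0$ and $q=0$, hence $K3$ or Enriques, and being minimal it satisfies $\Bir(\bar F)=\Aut(\bar F)$, so $g|\bar F$ and each $\gamma|\bar F$ are regular.

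Finally, for (7), suppose $(X,g)$ is rigidly parabolic or of primitively positive entropy. By Lemma~\ref{fib}(2), $g|Y$ is rigidly parabolic, hence parabolic, so $g|\tilde Y$ is of null entropy; since the spectral radius of $g^*$ on $H^*(\bar F\times\tilde Y)$ is the product of those on the factors, $g|X$ has positive entropy if and only if $g|\bar F$ does. Using that $g|\bar F$ normalizes $G_{\bar F}$ (from (3) and (6)), the composite $\bar X_1\to\bar F\to\bar F/G_{\bar F}$ is $G$-invariant and descends to a $g$-equivariant morphism $X\sim\bar X_1/G\to\bar F/G_{\bar F}$ onto a surface, with $0<2<\dim X$. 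If $g|\bar F$ has positive entropy, so does $g|(\bar F/G_{\bar F})$ by Lemma~\ref{quot}, exhibiting $(X,g)$ as imprimitive of positive entropy (case (10a) of \ref{entropy}), so it is not primitively positive entropy. In the rigidly parabolic case $g|X$ is of null entropy, so $g|\bar F$ is periodic or parabolic: if periodic, the descent $X\to\bar F/G_{\bar F}$ has $g$ periodic, contradicting rigidity; if parabolic, Lemma~\ref{k<1}(2) furnishes a $g$-equivariant $\bar F\to\P^1$ with $g|\P^1$ periodic, which descends to $X\to\P^1/G_{\bar F}$ with $g$ periodic on a curve, again contradicting rigidity. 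This exhausts all cases and proves (7).
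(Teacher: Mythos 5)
Your overall architecture coincides with the paper's, but there are two genuine gaps. The first is in assertion (2), which you correctly identify as the crux but do not actually prove. Viehweg--Zuo \cite{VZ} (0.2) is a statement about families over \emph{curves} and does not apply to a base torus of arbitrary dimension; and even granting $\mathrm{Var}(f)=0$ (which does follow from Kawamata's subadditivity once $F$ has a good minimal model), birational isotriviality only produces a trivialization after \emph{some} generically finite base change, not after a finite \emph{\'etale} one. That the trivializing cover of a $Q$-torus base can be taken finite \'etale is precisely the content of Nakayama's theorem (\cite{Na87}, Theorem at p.~427), which is what the paper invokes; nothing in your argument substitutes for it. Relatedly, you construct $\tilde Y$ from $\pi_1(Y)$ alone, before the family enters the picture, so there is no reason this particular cover kills the relevant monodromy. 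The paper instead starts from Nakayama's \'etale cover $Y'\to Y$ and then dominates it by a $g$-equivariant torus cover via the multiplication map on a torus cover $T$ of $Y$; your normal-core construction would serve the same purpose, but only if seeded with the subgroup corresponding to $Y'$.

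The second gap is in (7), in the rigidly parabolic subcase. Writing ``$X\to\P^1/G_{\bar F}$'' presupposes that $G_{\bar F}$ permutes the fibres of $\tau:\bar F\to\P^1$, i.e.\ acts on the base $\P^1$ compatibly with $\tau$. This is not automatic: $g$ only normalizes $G_{\bar F}$, so for $h\in G_{\bar F}$ the fibration $\tau\circ h$ need not be $g$-equivariant, and the uniqueness in Lemma \ref{k<1} cannot be applied to it directly. The paper supplies the missing step: the class $L=\sum_{h\in G_{\bar F}}h^*\bar F_p$ is nef and $(g|\bar F)^*$-stable; if $L$ were big, Lemma \ref{nefbig} would put a power of $g|\bar F$ into $\Aut_0(\bar F)=(1)$, contradicting parabolicity; hence $L^2=0$, forcing $\bar F_p\cdot h^*\bar F_p=0$, so $G_{\bar F}$ does permute the fibres and $\tau$ descends. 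Without this your contradiction is not reached. Your positive-entropy and periodic subcases, which only use the quotient $\bar F/G_{\bar F}$ itself, are fine, and the remaining items (1), (3)--(6) match the paper's argument -- with the remark that the discreteness you need is that of $\Bir(\bar F)$ for the possibly singular minimal model $\bar F$, for which the precise reference is Hanamura's structure theorem (a disjoint union of abelian varieties of dimension $q(\bar F)=0$) rather than the Lieberman argument behind Lemma \ref{split}.
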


\begin{proof}
(1) follows from Lemma \ref{Q-t} and the fact that $K_{\bar F} = K_{\bar X} | {\bar F}$.
(2) is proved in Nakayama \cite{Na87} Theorem at page 427.
Indeed, for the $g$-equivalence of $\tilde{Y} \to Y$, by \cite{Na87},
(2) is true with an \'etale extension $Y' \to Y$. Let $T \to Y$ be an \'etale
cover of a torus $T$ of minimal degree. Then $g | Y$ lifts to $g | T$ as in
Beauville \cite{Be} \S 3. Now the projection $T':= T \times_Y Y' \to T$
is \'etale. So there is another \'etale cover $T'' \to T'$ such that
the composite $T'' \to T' \to T$ is just the multiplicative map
$m_{T''}$ for some $m > 0$. In particular, $T = m_{T''}(T'')$ is isomorphic to $T''$.
Clearly, the natural action $g | T''$ is compactible with the action $g | T$
via the map $m_{T''}$. Now the composition $\tilde{Y} := T'' \to Y$ is $g$-equivariant
and factors through $Y' \to Y$,
so that (2) is satisfied.
(3) is true because $g | X_1$ is the lifting of the action $g$ on $X = X_1/G$.
\par
We now assume $q(F) = 0$.
Assume that a group $\langle h \rangle$ acts on both $X_1$ and $\tilde{Y}$
compactibly with the cartesian projection $X_1 \to \tilde{Y}$.
For instance, we may take $\langle h \rangle$ to be a subgroup of
$G | X_1$ or $\langle g | X_1 \rangle$.
This $h$ acts birationally on $\bar{X}_1$.
To be precise, for $(x, y) \in \bar{X}_1$, we have $h . (x,
y) = (\rho_h(y) . x, \ h . y)$, where $\rho_h: \tilde{Y} \cdots \rightarrow
\Bir(\bar{F})$ is a rational map. By Hanamura \cite{Hm} (3.3), (3.10) and page 135,
$\Bir(\bar{F})$ is a disjoint union of abelian varieties of dimension
equal to $q(\bar{F}) = q(F) = 0$ (the first equality is true because
the singularities of $\bar{F}$ are terminal and hence rational).
Thus $\Imm (\rho_h)$ is a single element and denoted as $ h | \bar{F} \in \Bir(\bar{F})$.
So $h | \bar{X}_1 = (h | \bar{F}) \times (h | \tilde{Y})$.
\par
(4) follows by applying the arguments above to $h = g$.
For (5), suppose $\dim F = 1, 2$. Note that $\kappa(F) = 0 = q(F)$. So $F$
is birational to $\bar{F}$, a $K3$ or an Enriques, by the classification theory of surfaces.
Now (5) follows from the fact that $\Bir(S) = \Aut(S)$ for smooth minimal surface $S$,
by the uniqueness of surface minimal model.
The argument in the preceding paragraph also shows
$G = G | {\bar{X}}_1 \le G_{\bar{F}} \times G_{\tilde Y}$
with $G_{\bar{F}} \le \Bir(\bar{F}) = \Aut(\bar{F})$ and
$G_{\tilde Y} = \Gal(\tilde{Y}/Y) \, (= G) \le \Aut(\tilde{Y})$
(so that the two projections from $G|{\bar{X}}_1$ map onto $G_{\bar{F}}$ and $G_{\tilde Y}$,
respectively).
This proves (6).
\par
(7) Set $X' := {\bar{X}}_1/G$. Then $g$ acts on $X'$ biregularly
such that the pairs $(X', g)$ and $(X, g)$ are birationally equivalent,
$g | X_1$ being the lifting of $g | X$ and $X_1$ being birational to ${\bar{X}}_1$.
The projections $X' = {\bar{X}}_1/G \to \bar{F}/G_{\bar{F}}$ and $X' \to \tilde{Y}/G = Y$
are $g$-equivariant, since $g$ normalizes $G$.
\par
Suppose the contrary that $g | X$ is either rigidly parabolic,
or of primitively positive entropy. Then both $g | (\bar{F}/G_{\bar{F}})$
and $g | Y$ are rigidly parabolic by Lemma \ref{fib}
(applied to $g$-equivariant resolutions of both the source and targets of the projections).
In particular, $g | \bar{F}$ is parabolic by Lemma \ref{quot}
(applied to $g$-equivariant resolutions of the source and target of $\bar{F} \to \bar{F}/G_{\bar{F}}$).
\par
By Lemma \ref{k<1}, there is a unique
$g$-equivariant surjective morphism $\tau: \bar{F} \rightarrow \P^1$ (with fibre
$\bar{F}_p$) such that a positive power $g^s | \P^1 = \id$.
By (3), $g|\bar{F}$ normalizes $G_{\bar{F}}$.
So $(g|\bar{F})^*$ stabilizes the class of the nef divisor
$L := \sum_{h \in G_{\bar{F}}} h^*\bar{F}_p$. If $L$ is nef and big,
then, by Lemma \ref{nefbig}, a positive power of $g|\bar{F}$ is in $\Aut_0(\bar{F}) = (1)$, absurd.
Thus $L^2 = 0$, so $\bar{F}_p . h^*\bar{F}_p = 0$ and $G_{\bar{F}}$ permutes fibres of $\tau$.
Therefore, $\tau$ descends to a $g$-equivariant fibration
$\bar{F}/G_{\bar{F}} \to B \cong \P^1$ with $g^s|B =\id$, whence $g|(\bar{F}/G)$ is not
rigidly parabolic, absurd.
This proves (7) and also the lemma.
\end{proof}

\section{Results in arbitrary dimension; the proofs}

The results in Introduction follow from Theorem \ref{Thk} and three general results
below in dimension $\ge 3$.
\par
In the case of dimension $\le 3$, the good (terminal) minimal model program (as in
Kawamata \cite{Ka85}, or Mori \cite{Mo} \S 7) has been completed. So
in view of Theorems \ref{Thk}, \ref{Thk=0} and \ref{Thk=-1}, we are
able to describe the dynamics of $(X, g)$ in $(\ref{3-4k=0}) \sim
(\ref{3k=-1})$. See also Remark \ref{remark}.

\par
The result below is parallel to the conjecture (resp. theorem) of Demailly - Peternell - Schneider
(resp. Qi Zhang) to the effect that the Albanese map $\alb_X : X \to \Alb(X)$
is surjective whenever $X$ is a compact K\"ahler (resp. projective) manifold with $-K_X$ nef
(and hence $\kappa(X) = -\infty$).

\begin{theorem}\label{Thq}
Let $X$ be a projective complex manifold of $\dim X \ge 1$,
and $g \in \Aut(X)$. Suppose that the pair $(X, g)$ is either
rigidly parabolic or of primitively positive entropy $($see
$(\ref{entropy})$$)$. Then we have:
\begin{itemize}
\item[(1)]
The albanese map $\alb_X : X \rightarrow \Alb(X)$ is a
$g$-equivariant surjective morphism with connected fibres.
\item[(2)]
The irregularity $q(X)$ satisfies
$q(X) \le \dim X$.
\item[(3)]
$q(X) = \dim X$ holds if and only if $X$ is $g$-equivariantly
birational to an abelian variety.
\item[(4)]
$\alb_X : X \rightarrow \Alb(X)$ is a smooth surjective morphism if $q(X) < \dim X$;
see also $(\ref{smooth})$, $(\ref{fib})$.
\end{itemize}
\end{theorem}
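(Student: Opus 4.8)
The plan is to establish the four assertions in the order (1), (2), (4), (3), drawing everything from the three structural lemmas \ref{deligne}, \ref{fib}, \ref{smooth} (together with Lemma \ref{quot}) and Kawamata's characterization of the Albanese map in Kodaira dimension zero. The $g$-equivariance in (1) is automatic: the action of $g$ on $H^0(X,\Omega_X^1)$ and on $H_1(X,\Z)$ induces an automorphism $g_*$ of $\Alb(X)$ with $\alb_X\circ g=g_*\circ\alb_X$, by functoriality of the Albanese torus. The substance of (1) is surjectivity and connectedness. For surjectivity I would work with the image $Y_0:=\alb_X(X)$, a $g$-stable subvariety of $\Alb(X)$ generating it as a group. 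Let $B\subset\Alb(X)$ be the ($g$-stable) identity component of the stabilizer $\{a \mid a+Y_0=Y_0\}$; by Ueno \cite{Ue} the quotient $Y_0/B\subset\Alb(X)/B$ is of general type. The composite $X\to\Alb(X)\to\Alb(X)/B$ is $g$-equivariant with image $Y_0/B$, so if $\dim(Y_0/B)>0$ then, after a $g$-equivariant resolution of $Y_0/B$ and a $g$-equivariant blowup of $X$ making the induced rational map a morphism (which preserves the type of the pair), Lemma \ref{deligne} forces $\kappa(Y_0/B)\le 0$, contradicting general type. Hence $Y_0/B$ is a point, $Y_0$ is a translate of $B$, and since the differences $Y_0-Y_0=B$ must generate $\Alb(X)$ we get $B=\Alb(X)=Y_0$, i.e. $\alb_X$ is surjective. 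Assertion (2) is then immediate, as $q(X)=\dim\Alb(X)\le\dim X$.

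I would next settle the fibre-theoretic statements by separating $q(X)<\dim X$ from $q(X)=\dim X$, the case $q(X)=0$ being trivial. When $0<q(X)<\dim X$ the Albanese map is a genuine fibration with $\dim X>\dim\Alb(X)>0$, so Lemma \ref{fib}(2) shows that $g|\Alb(X)$ is rigidly parabolic. Applying Lemma \ref{smooth}(4) to $\alb_X$ itself (taking $A=Y=\Alb(X)$ with the identity as the generically finite cover) shows $\alb_X$ is smooth, which is exactly (4). For connectedness I would pass to the Stein factorization, writing $\alb_X$ as $\beta:X\to W$ followed by a finite map $\alpha:W\to\Alb(X)$. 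A $g$-equivariant resolution $\tilde W\to W$ gives a $g$-equivariant generically finite surjection $\tilde W\to\Alb(X)$, which by Lemma \ref{smooth}(4) is \'etale; hence $\tilde W\to W$ is finite and birational onto the normal $W$, so $W\cong\tilde W$ is a smooth \'etale cover of the abelian variety $\Alb(X)$ and is itself an abelian variety. Since $W$ is an abelian variety, the universal property of $\alb_X$ makes $\beta$ factor as $\beta=h\circ\alb_X$; then $\alpha\circ h\circ\alb_X=\alpha\circ\beta=\alb_X$ forces $\alpha\circ h=\id$, so the finite map $\alpha$ has degree one. Thus $W=\Alb(X)$ and the fibres of $\alb_X$ are connected.

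Finally, when $q(X)=\dim X$ the map $\alb_X$ is generically finite and surjective onto the abelian variety $\Alb(X)$, so $\kappa(X)\ge\kappa(\Alb(X))=0$ by Iitaka's inequality, while Lemma \ref{deligne} gives $\kappa(X)\le 0$; hence $\kappa(X)=0$. By Kawamata's theorem \cite{Ka85} the Albanese map of a smooth projective variety of Kodaira dimension zero has connected fibres, so the generically finite $\alb_X$ is birational. As $\alb_X$ is $g$-equivariant, this simultaneously gives the connectedness claim of (1) in this range and the ``only if'' direction of (3). The ``if'' direction of (3) is clear, since $q$ is a birational invariant and equals the dimension for an abelian variety.

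The step I expect to be most delicate is the surjectivity argument: one must transport the general-type obstruction living in $\Alb(X)/B$ back to a smooth $g$-equivariant model of both source and target so that Lemma \ref{deligne} applies, all the while checking that these birational modifications preserve rigid parabolicity or primitive positive entropy. A secondary care point is the connectedness in the primitively-positive-entropy case with $q(X)=\dim X$, where $g|\Alb(X)$ has positive entropy and Lemma \ref{smooth} is unavailable; this is precisely where the reduction to $\kappa(X)=0$ and the appeal to Kawamata's theorem become essential rather than cosmetic.
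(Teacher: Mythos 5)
Your proof is correct and follows the same skeleton as the paper's (Lemma \ref{deligne} plus Ueno for surjectivity, Stein factorization for connectedness, Lemma \ref{fib} plus Lemma \ref{smooth} for smoothness), but it diverges in two places worth noting. For surjectivity, the paper simply quotes Ueno's Lemma 10.1 ($\kappa(\alb_X(X)) \ge 0$, with equality iff the image is a translate of a subtorus), whereas you re-derive that lemma by passing to the quotient by the stabilizer $B$ and hitting the general-type quotient with Lemma \ref{deligne}; this is the same argument unpacked, and your care about transporting everything to smooth $g$-equivariant models is exactly the care the paper implicitly takes. The genuine difference is in the connectedness of the Albanese fibres: the paper handles all values of $q(X)$ uniformly by noting that the Stein factor $X_0 \to \Alb(X)$ has $\kappa(X_0) \ge 0$ (ramification formula), hence $\kappa(X_0) = 0$ by Lemma \ref{deligne}, hence is \'etale by Kawamata \cite{Ka81} Theorem 4, and then $X_0 = \Alb(X)$ by the universal property. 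You instead split into cases, using the ergodicity statement Lemma \ref{smooth}(4) to force \'etaleness of the Stein factor when $q(X) < \dim X$, and Kawamata's connectedness-of-Albanese-fibres theorem when $q(X) = \dim X$; both branches are valid (your section argument $\alpha \circ h = \id$ for degree one is fine), but the paper's single appeal to \cite{Ka81} Theorem 4 is cleaner and avoids the case split. Two small slips: the Kawamata result you want in the generically finite case is \cite{Ka81} (Characterization of abelian varieties), not \cite{Ka85}; and the phrase ``the differences $Y_0 - Y_0 = B$ must generate'' should be replaced by the observation that, normalizing $\alb_X(x_0) = 0$, a translate of $B$ containing $0$ equals $B$, which must then be all of $\Alb(X)$ since the image generates.
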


\begin{theorem}\label{Thk=0}
Let $X$ be a projective complex manifold of dimension $n \ge
3$, with $g \in \Aut(X)$. Assume the following conditions.
\begin{itemize}
\item[(1)]
The Kodaira dimension $\kappa(X) = 0$ and the irregularity $q(X)
> 0$.
\item[(2)]
The pair $(X, g)$ is either rigidly parabolic or of primitively
positive entropy $($see $(\ref{entropy})$$)$.
\item[(3)]
$X$ has a good terminal minimal model
$($so $(3)$ is automatic if $n \le 3)$;
see $(\ref{ext})$, $(\ref{remark})$, \cite{Ka85} page $4$, \cite{Mo} \S $7$.
\end{itemize}
Then Case $(1)$ or $(2)$ below occurs.
\begin{itemize}
\item[(1)]
There are a $g$-equivariantly birational morphism $X \to X'$,
a pair $(\tilde{X'}, g)$ of a torus $\tilde{X'}$ and $g \in
\Aut(\tilde{X'})$, and a $g$-equivariant \'etale Galois cover
$\tilde{X'} \to X'$. In particular, $X'$ is a $Q$-torus.
\item[(2)]
There are a $g$-equivariant \'etale Galois cover $\tilde{X} \to X$,
a Calabi-Yau variety $F$ with $\dim F \ge 3$ $($see $(\ref{entropy}))$
and a birational map $\tilde{X} \cdots \to F \times \tilde{A}$
over $\tilde{A} := \Alb(\tilde{X})$.
Further, the biregular action $g | \tilde{X}$ is conjugate to
a birational action $(g | F) \times
(g | \tilde{A})$ on $F \times \tilde{A}$, where
$g | F \in \Bir(F)$ with the first dynamical degree
$d_1(g|F) = d_1(g|X)$, where $g | \tilde{A} \in \Aut(\tilde{A})$ is parabolic.
In particular, $\dim X \ge \dim F + q(X) \ge 4$.
\end{itemize}
\end{theorem}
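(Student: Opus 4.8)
The plan is to produce the structure from the Albanese fibration and then split off a Calabi--Yau factor by means of Lemma~\ref{ext}. First, by Theorem~\ref{Thq} the Albanese map $\alb_X : X \to A := \Alb(X)$ is a $g$-equivariant surjective morphism with connected fibres, and $0 < q(X) \le \dim X$. If $q(X) = \dim X$, then Theorem~\ref{Thq}(3) says $X$ is $g$-equivariantly birational to an abelian variety, which is Case~(1); so I may assume $0 < q(X) < \dim X = n$, and then Theorem~\ref{Thq}(4) gives that $\alb_X$ is a smooth surjective morphism with connected general fibre $F$ of dimension $n - q(X) \ge 1$. Since $A$ is an abelian variety (hence a $Q$-torus), $\kappa(X) = 0$, and $X$ has a good terminal minimal model by hypothesis~(3), I can apply Lemma~\ref{ext}(1)--(3) with $Y = A$: this yields a $g$-equivariant finite \'etale Galois cover $\tilde A \to A$ from a torus, extends the composite $\bar X \dashrightarrow X \to A$ to a morphism with general fibre $\bar F$ satisfying $sK_{\bar F} \sim 0$, and gives the birational identification $X_1 := X \times_A \tilde A \sim_{\mathrm{bir}} \bar F \times \tilde A$ over $\tilde A$ together with the fact that $g$ normalises the Galois group. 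In particular $\kappa(F) = \kappa(\bar F) = 0$.

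The next and most delicate step is to arrange, after a further $g$-equivariant finite \'etale cover, that the Albanese fibre has vanishing irregularity. If $q(F) = 0$ already, I proceed directly. Otherwise $F$ carries its own nontrivial Albanese fibration, and I enlarge the abelian base by iterating the Albanese construction on the fibres (equivalently, by passing to the maximal abelian quotient of the minimal model with numerically trivial canonical class), using Lemma~\ref{ext}(2)--(3) to keep each splitting $g$-equivariant and to lift $g$ to the covers as in Beauville's construction. Since each iteration strictly drops the fibre dimension, the process terminates either with a fibre $\bar F$ having $q(\bar F) = 0$, or with a $0$-dimensional residual fibre, in which case $\bar X$ is up to \'etale cover abelian and $X$ is a $Q$-torus, giving Case~(1). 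Maintaining the $g$-action throughout — so that every cover is $g$-equivariant, $g$ normalises each Galois group, and the hypotheses transfer via Lemma~\ref{quot}(2)--(4) — is the technical heart of the argument and the step I expect to be the main obstacle.

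Assuming now $q(F) = 0$, Lemma~\ref{ext}(4) gives the product decomposition $\bar X_1 = \bar F \times \tilde A$ on which $g$ acts as $(g | \bar F) \times (g | \tilde A)$ with $g | \tilde A \in \Aut(\tilde A)$ and $g | \bar F \in \Bir(\bar F)$, where $\bar F$ is a Calabi--Yau variety in the sense of $(\ref{entropy})$. I then run the dimension dichotomy on $F$. If $\bar F$ is itself abelian, then $\bar X_1$ is a torus and $X$ is a $Q$-torus, which is Case~(1). The value $\dim F = 1$ cannot occur, since a curve with $q = 0$ and $\kappa = 0$ does not exist. If $\dim F = 2$, then $\bar F$ is a $K3$ or Enriques surface, and Lemma~\ref{ext}(7) shows that $g | X$ is neither rigidly parabolic nor of primitively positive entropy, contradicting hypothesis~(2). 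Hence the only remaining possibility is $\dim F \ge 3$ with $\bar F$ a Calabi--Yau variety of dimension $\ge 3$, which is Case~(2); and then $\dim X = \dim F + q(X) \ge 3 + 1 = 4$.

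Finally I extract the dynamical information in Case~(2). Since $0 < \dim A < \dim X$ and $\alb_X$ is $g$-equivariant, the primitivity (resp. rigid parabolicity) of $(X,g)$ forbids $g | A$ from having positive entropy or being periodic; as $g | A$ therefore has null entropy and infinite order, it is parabolic, and hence so is $g | \tilde A$. In particular $d_1(g | \tilde A) = 1$, so from the product action on $\bar F \times \tilde A$ and the invariance of the first dynamical degree under the \'etale cover $\tilde X \to X$ (Lemma~\ref{quot}(1)) one obtains $d_1(g | F) = d_1(g | X)$, which completes the verification of Case~(2).
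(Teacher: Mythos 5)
Your proposal follows essentially the same route as the paper: Theorem \ref{Thq} to get the $g$-equivariant connected Albanese fibration, Lemma \ref{ext} applied to $\alb_X : X \to \Alb(X)$ to split off the fibre after an \'etale cover of the base, iteration until the fibre satisfies $q = 0$ or is birationally abelian, Lemma \ref{ext}(5),(7) to exclude $\dim F \le 2$, and Lemma \ref{quot} together with the K\"unneth/product formula for $d_1$. However, the step you explicitly leave open --- keeping the iteration $g$-equivariant when $0 < q(F_1) < \dim F_1$ --- is a genuine gap as written, and ``passing to the maximal abelian quotient of the minimal model'' does not by itself produce what Lemma \ref{ext} needs as input, namely a $g$-equivariant surjective morphism from $X$ onto a $Q$-torus with connected general fibres. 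The paper closes this by never leaving the original $X$: it takes the $\langle g, G_1\rangle$-equivariant Albanese map $\alb_{X_1} : X_1 \to A(X_1) = A(F_1) \times \tilde{Y}_1$ of the cover $X_1 = X \times_{Y_1} \tilde{Y}_1$, and then quotients by $G_1 = \Gal(\tilde{Y}_1/Y_1)$ to get a $g$-equivariant surjection $X = X_1/G_1 \to Y_2 := A(X_1)/G_1$. Since $G_1$ acts freely on the $\tilde{Y}_1$-factor, $A(X_1) \to Y_2$ is \'etale, so $Y_2$ is again a $Q$-torus, the fibres of $X \to Y_2$ are connected, and Lemma \ref{ext} applies verbatim to $X \to Y_2$; the count $\dim Y_{i+1} = \dim Y_i + q(F_i)$ forces termination. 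You should adopt this quotient construction to make your induction precise.

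Two smaller corrections. First, the terminal case of the iteration giving Case (1) is $q(F_m) = \dim F_m > 0$ (so $F_m$ is birationally abelian, $q(X_m) = \dim X_m$, and Kawamata's theorem makes $X_m \to A(X_m)$ a \emph{birational} morphism, which descends to $X \to X' = A(X_m)/G_m$), not a zero-dimensional residual fibre; relatedly, once $q(F) = 0$ and $\dim F > 0$ the subcase ``$\bar F$ abelian'' in your dichotomy is vacuous. Second, after iterating, the base $\tilde{Y}_m$ may be strictly larger than $\Alb(X)$, so the conclusion is the inequality $\dim X = \dim F + \dim \tilde{Y}_m \ge \dim F + q(X)$ rather than the equality you wrote.
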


\begin{theorem}\label{Thk=-1}
Let $X'$ be a projective complex manifold of dimension $n \ge
3$, with $g \in \Aut(X')$. Assume the following conditions $($see
$(\ref{remark}))$.
\begin{itemize}
\item[(1)]
The Kodaira dimension $\kappa(X') = -\infty$.
\item[(2)]
The pair $(X', g)$ is either rigidly parabolic or of primitively
positive entropy $($see $(\ref{entropy})$$)$.
\item[(3)]
The good terminal minimal model program is completed for varieties of
dimension $\le n$ $($so $(3)$ is automatic if $n \le 3)$;
see \cite{Ka85} p.$4$, \cite{Mo} \S $7$.
\end{itemize}
\par
Then there is a $g$-equivariant birational morphism $X \to X'$
from a projective manifold $X$ such that one of the cases below occurs.
\begin{itemize}
\item[(1)]
$X$ is a rationally connected manifold in the sense of \cite{Cp} and
\cite{KoMM}.
\item[(2)]
$q(X) = 0$.
The maximal rational connected fibration $\MRC_{X} : X
\rightarrow Z$ in the sense of $[$ibid$]$ is a well defined
$g$-equivariant surjective morphism. $Z$ is a weak Calabi-Yau
manifold with $\dim X > \dim Z \ge 3$.
\item[(3)]
$q(X) > 0$. There is a $g$-equivariant \'etale cover $\tilde{X}
\rightarrow X$ such that the surjective $g$-equivariant albanese map
$\alb_{\tilde X} : \tilde{X} \rightarrow \Alb(\tilde{X})$ coincides
with the maximal rationally connected fibration $\MRC_{\tilde X}$.
\item[(4)]
$q(X) > 0$.  There is a $g$-equivariant \'etale Galois cover $\tilde{X}
\rightarrow X$ such that the surjective albanese map $\alb_{\tilde
X} : \tilde{X} \rightarrow \tilde{A} := \Alb(\tilde{X})$ factors as
the $g$-equivariant $\MRC_{\tilde X} : \tilde{X} \rightarrow \tilde{Z}$
and $\alb_{\tilde Z} : \tilde{Z} \to \Alb(\tilde{Z}) = \tilde{A}$.
Further, there are a Calabi-Yau variety $F$ with $\dim F \ge 3$
$($see $(\ref{entropy}))$,
and a birational morphism $\tilde{Z} \to F \times \tilde{A}$
over $\tilde{A}$,
such that the biregular action $g | \tilde{Z}$ is conjugate to
a birational action $(g | F) \times (g | \tilde{A})$ on $F \times \tilde{A}$, where
$g | F \in \Bir(F)$, where $g | \tilde{A} \in \Aut(\tilde{A})$ is parabolic.
Also $\dim X > \dim F + q(X) \ge 4$.
\end{itemize}
\end{theorem}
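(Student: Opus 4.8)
The plan is to reduce the statement to Theorem \ref{Thk=0} by passing to the base of the maximal rationally connected fibration. Since $\kappa(X') = -\infty$ and the good terminal minimal model program holds in dimension $\le n$, the manifold $X'$ cannot be non-uniruled (a non-uniruled variety would admit a good minimal model with $\kappa \ge 0$); hence $X'$ is uniruled. The $\MRC$ fibration is canonical, so it is preserved by $g$, and after a $g$-equivariant resolution (Hironaka \cite{Hi}, with the equivariance as in \ref{entropy}) I may replace $X'$ by a $g$-equivariant birational model $X$ carrying a genuine $g$-equivariant surjective morphism $\MRC_X : X \to Z$ with rationally connected general fibre and $\dim Z < n$. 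If $\dim Z = 0$ then $X$ itself is rationally connected and Case $(1)$ holds, so from now on I assume $\dim Z > 0$.

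By the theory of the $\MRC$ fibration (see \cite{Cp}, \cite{KoMM}), the base $Z$ is non-uniruled, so $\kappa(Z) \ge 0$ by the minimal model program in dimension $\le n-1$. On the other hand, since $(X, g)$ is rigidly parabolic or of primitively positive entropy and $\dim X > \dim Z > 0$, Lemma \ref{fib} shows that $g | Z$ is rigidly parabolic; Lemma \ref{deligne} then forces $\kappa(Z) \le 0$. Hence $\kappa(Z) = 0$. Moreover, because the general fibre of $\MRC_X$ is rationally connected and so carries no nonzero global $1$-form, the albanese map of $X$ factors through $\MRC_X$ and $q(X) = q(Z)$; in particular $\Alb(X) = \Alb(Z)$ $g$-equivariantly.

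It remains to analyse $Z$, which now satisfies $\kappa(Z)=0$ with $g|Z$ rigidly parabolic. If $q(Z) = 0$, then $Z$ is a weak Calabi-Yau manifold. Here $\dim Z \ge 3$: indeed $\dim Z = 1$ is excluded since an elliptic curve has $q = 1$, and $\dim Z = 2$ would make $Z$ a $K3$ or Enriques surface, which by Lemma \ref{ps} admits no rigidly parabolic automorphism. This yields Case $(2)$, with $q(X) = q(Z) = 0$ and $\dim X > \dim Z \ge 3$. If instead $q(Z) > 0$, I apply Theorem \ref{Thk=0} to the pair $(Z, g)$, whose hypotheses are met: $\kappa(Z) = 0$, $q(Z) > 0$, $g|Z$ rigidly parabolic, and $Z$ (being non-uniruled of dimension $< n$) has a good terminal minimal model. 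The two alternatives of Theorem \ref{Thk=0} translate into Cases $(3)$ and $(4)$: in the $Q$-torus alternative the albanese of a suitable $g$-equivariant \'etale cover $\tilde X$ is birational to the torus and hence coincides with $\MRC_{\tilde X}$ (Case $(3)$); in the Calabi-Yau alternative one pulls back the \'etale Galois cover and the product decomposition of $Z$ along $\MRC$ to obtain the factorization $\alb_{\tilde X} = \alb_{\tilde Z} \circ \MRC_{\tilde X}$ together with the birational product structure over $\tilde A$ involving a Calabi-Yau variety $F$ with $\dim F \ge 3$ (Case $(4)$); the bound $\dim X > \dim F + q(X) \ge 4$ is inherited from the corresponding bound in Theorem \ref{Thk=0}.

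The main obstacle is the bookkeeping of $g$-equivariance across the several birational modifications and \'etale covers. One must guarantee that the $\MRC$ fibration can be realized as an honest $g$-equivariant morphism, that the \'etale (Galois) covers produced by Theorem \ref{Thk=0} for $Z$ lift compatibly to covers of $X$ respecting both $g$ and the rationally connected fibration, and that the albanese map genuinely factors through $\MRC$ after these covers, so that the clean statements ``albanese coincides with $\MRC$'' in Case $(3)$ and ``albanese factors as $\MRC$ followed by $\alb_{\tilde Z}$'' in Case $(4)$ hold on the nose rather than merely birationally.
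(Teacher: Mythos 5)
Your proposal is correct and follows essentially the same route as the paper: realize the MRC fibration as a $g$-equivariant morphism $X\to Z$ after Hironaka resolution, use Graber--Harris--Starr to get $Z$ non-uniruled, combine Lemmas \ref{fib} and \ref{deligne} with the minimal model hypothesis to force $\kappa(Z)=0$, rule out $\dim Z\le 2$ via Lemma \ref{ps} when $q(Z)=0$, and feed $(Z,g)$ into Theorem \ref{Thk=0} when $q(Z)>0$. The only point you flag but leave open --- lifting the \'etale Galois cover of $Z$ to $X$ compatibly with $g$ and the fibration --- is handled in the paper simply by setting $\tilde X:=X\times_Z\tilde Z$, so that $\tilde X\to\tilde Z$ is again the MRC fibration and the albanese factorization follows from Lemma \ref{Q-t}.
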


\begin{remark}\label{remark}
\end{remark}

\par \noindent
(a) By the proof, the condition (3) in
Theorem \ref{Thk=-1} can be weakened to:
\begin{itemize}
\item[(3)']
$X'$ is uniruled. For every projective variety $Z$ dominated by a proper subvariety
($\ne X'$) of $X'$, if the Kodaira dimension $\kappa(Z) = -\infty$
then $Z$ is uniruled, and if $\kappa(Z) = 0$ then $Z$
has a good terminal minimal model $Z_m$
(i.e., $Z_m$ has only terminal singularities and $sK_{Z_m} \sim 0$
for some $s > 0$); see \cite{Ka85} page 4, and \cite{Mo} \S 7.
\end{itemize}

\par \noindent
In dimension three, the good terminal minimal model program has been
completed; see \cite{KMM} and \cite{KM}. For the recent break
through on the minimal model program in arbitrary dimension, we
refer to Birkar - Cascini - Hacon - McKernan \cite{BCHM}.
\par \noindent
(b) The good minimal model program also implies the equivalence of
the Kodaira dimension $\kappa(X) = -\infty$ and the uniruledness of
$X$. It is known that the uniruledness of $X$ always implies
$\kappa(X) = -\infty$ in any dimension.
\par \noindent
(c) The birational automorphisms $g | F$ in Theorems \ref{Thk=0} and \ref{Thk=-1}
are indeed isomorphisms in codimenion $1$; see \cite{Hm} (3.4).
\par \noindent
(d) See \cite{NZ} Theorem B for a stonger result for the case of $K_X \equiv 0$.

\par \vskip 0.5pc
As consequences of Theorems \ref{Thk=0} and
\ref{Thk=-1} for all dimension $\ge 3$ and as illustrations, we have
the simple 3-dimensional formulations of them as in
$(\ref{3-4k=0}) \sim (\ref{3k=-1})$ below.

\par \vskip 1pc
The result below says that the dynamics on an irregular threefold
of Kodaira dimension $0$, are essentially the dynamics of a
torus.

\begin{corollary}\label{3-4k=0}
Let $X'$ be a smooth projective complex threefold,
with $g \in \Aut(X')$. Assume that the Kodaira dimension $\kappa(X') = 0$,
irregularity $q(X')$ $> 0$, and the pair $(X', g)$ is either rigidly parabolic or
of primitively positive entropy; see
$(\ref{entropy})$.
\par
Then there are a $g$-equivariant birational morphism $X' \to X$,
a pair $(\tilde{X}, g)$ of a torus $\tilde{X}$ and $g \in \Aut(\tilde{X})$,
and a $g$-equivariant \'etale Galois cover $\tilde{X} \to X$.
In particular, $X$ is a $Q$-torus.
\end{corollary}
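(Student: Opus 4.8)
The plan is to deduce this corollary directly from Theorem \ref{Thk=0}, applied with the two birational models relabelled. First I would check that $X'$, playing the role of the ``$X$'' of that theorem, satisfies all three of its hypotheses. Conditions (1) and (2) there are literally the assumptions made here, namely $\kappa(X') = 0$, $q(X') > 0$, and that $(X', g)$ is rigidly parabolic or of primitively positive entropy; see $(\ref{entropy})$. Condition (3), the existence of a good terminal minimal model, is automatic because $\dim X' = 3$ and the good terminal minimal model program is completed in dimension three by Kawamata and Mori; see \cite{Ka85} and \cite{Mo} \S 7.

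Theorem \ref{Thk=0} then yields one of its two cases, and the one substantive step is to exclude Case (2). In that case one would obtain a Calabi-Yau variety $F$ with $\dim F \ge 3$ together with the inequality $\dim X' \ge \dim F + q(X') \ge 4$, where the final bound uses the standing hypothesis $q(X') > 0$. This contradicts $\dim X' = 3$, so Case (2) cannot occur.

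Hence Case (1) of Theorem \ref{Thk=0} holds. Writing $X$ for the better birational model denoted ``$X'$'' inside that theorem, Case (1) supplies a $g$-equivariant birational morphism $X' \to X$, a pair $(\tilde{X}, g)$ consisting of a torus $\tilde{X}$ with an induced $g \in \Aut(\tilde{X})$, and a $g$-equivariant \'etale Galois cover $\tilde{X} \to X$; in particular $X$ is a $Q$-torus. This is exactly the assertion of the corollary. Beyond this bookkeeping and the dimension count ruling out Case (2), there is no genuine obstacle, since the substantial work has already been carried out in the proof of Theorem \ref{Thk=0}.
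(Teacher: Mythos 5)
Your proposal is correct and matches the paper's own (very terse) proof, which simply derives the corollary from Theorem \ref{Thk=0}; your explicit check of hypothesis (3) in dimension three and the exclusion of Case (2) via $\dim X \ge \dim F + q(X) \ge 4 > 3$ is exactly the intended bookkeeping.
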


The result below shows that the dynamics on a threefold of Kodaira
dimension $-\infty$ are (or are built up from) the dynamics on a
rationally connected threefold (or on a rational surface and that on
a $1$-torus).

\begin{theorem}\label{3k=-1}
Let $X$ be a smooth projective complex threefold, with $g \in
\Aut(X)$. Assume that $\kappa(X) = -\infty$, and the pair $(X,
g)$ is either rigidly parabolic or of primitively positive entropy
$($see $(\ref{entropy})$$)$.
Then we have:
\begin{itemize}
\item[(1)]
If $q(X) = 0$ then $X$ is rationally connected in the sense of
\cite{Cp} or \cite{KoMM}.
\item[(2)]
Suppose that $q(X) \ge 1$ and the pair $(X, g)$ is of primitively
positive entropy. Then $q(X) = 1$ and the albanese map $\alb_X: X
\rightarrow \Alb(X)$ is a smooth surjective morphism with every fibre $F$ a
smooth projective rational surface of Picard number $\rank \,
\Pic(F) \ge 11$.
\end{itemize}
\end{theorem}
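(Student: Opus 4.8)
The plan is to derive the structure of $(X,g)$ from the higher-dimensional Theorems \ref{Thq} and \ref{Thk=-1} specialised to $n=3$, and then to pin down the fibre by a lattice computation resting on Lemma \ref{salem}. For (1), I would apply Theorem \ref{Thk=-1} to $(X,g)$ (its hypothesis (3) is automatic since $n=3$): one gets a $g$-equivariant birational morphism $W\to X$ with $W$ in one of the four cases there. As $q$ and uniruledness are birational invariants and $q(X)=0$, cases (3),(4) (needing $q>0$) and case (2) (needing $\dim W>\dim Z\ge 3$, impossible for $\dim W=3$) are all excluded, so case (1) holds and $W$---hence $X$---is rationally connected.

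For (2), $\kappa(X)=-\infty$ makes $X$ uniruled, so $X$ is not birational to an abelian variety and Theorem \ref{Thq} gives $q(X)<3$ together with $\alb_X\colon X\to\Alb(X)$ a $g$-equivariant smooth surjective morphism with connected fibres; thus $q(X)\in\{1,2\}$. I would analyse the canonical (hence $g$-equivariant) maximal rationally connected fibration $\MRC_X\colon X\dashrightarrow Z$, a morphism after a $g$-equivariant modification, with $Z$ non-uniruled, $q(Z)=q(X)$ and $1\le\dim Z\le 2$. By Lemma \ref{fib}(2), $g|Z$ is rigidly parabolic. If $\dim Z=2$ then $Z$ is a non-uniruled surface with a rigidly parabolic action, so by the classification in Lemma \ref{ps} it is an abelian surface with $q=2$; hence $q(X)=2$, $Z=A:=\Alb(X)$ and $\alb_X=\MRC_X$ is a $\P^1$-fibration (its fibres being smooth rational curves, since $X$ is uniruled over the non-uniruled $A$). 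This I would exclude as follows: $g|A$ is parabolic, hence of null entropy, so by Lemma \ref{uni} a power $(g^s)^*$ is unipotent on $H^*(A)$; the projective-bundle (Leray) structure of $\alb_X$ with $\Q$-coefficients then makes $(g^s)^*$ unipotent on $H^*(X,\Q)$, so $g|X$ has null entropy, a contradiction. Therefore $\dim Z=1$; then $q(Z)=q(X)$ forces $Z=E$ elliptic and $q(X)=1$, $\MRC_X=\alb_X\colon X\to E$, and the fibres $F$ are rationally connected surfaces, i.e.\ smooth rational surfaces. Smoothness of $\alb_X$ and deformation-invariance of $q$ and the plurigenera extend this to every fibre.

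For the bound $\rank\Pic(F)\ge 11$ I would apply Lemma \ref{salem}(2) to $\alb_X\colon X\to E$: the lattice $L=(\NS(X)|F)/(\torsion)$ has signature $(1,r)$, $g^*|L$ is an isometry, and there is a nonzero $v=v_g|F\in L\otimes\R$ with $g^*v=d_1v$, $d_1:=d_1(g)>1$. Being an isometry, $g^*$ gives $v^2=d_1^2 v^2$, so $v^2=0$; and $K_F=K_X|F\in L$ satisfies $g^*K_F=K_F$ (the fibre has trivial normal bundle and $g^*K_X=K_X$), so comparing eigenvalues yields $v\cdot K_F=0$ and $K_F\notin\R v$ (else $g^*K_F=d_1K_F\ne K_F$). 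In signature $(1,r)$ a nonzero isotropic vector $v$ has $w^2\le 0$ for every $w\perp v$, with equality only on $\R v$; hence $K_F^2<0$. Since $F$ is rational, Noether's formula gives $K_F^2=10-\rank\Pic(F)$, so $\rank\Pic(F)\ge 11$ for every fibre.

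The main obstacle is not the case bookkeeping but certifying where the positive entropy lives. One delicate point is excluding the $\P^1$-fibration over $A$, for which the unipotency argument above (equivalently, the relative dynamical-degree comparison) is essential. The sharper difficulty is converting the Salem-number/isometry data of Lemma \ref{salem}(2) into the numerical inequality $K_F^2<0$: the isotropic-eigenvector computation makes this clean, but it depends on knowing that both $v$ and $K_F$ lie in $L$ and that $g^*$ fixes $K_F$, which is exactly where the smooth fibration structure ($K_F=K_X|F$ and $g^*F=F$) is used.
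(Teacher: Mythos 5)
Your proposal is correct and shares the paper's overall skeleton---part (1) is deduced from Theorem \ref{Thk=-1} exactly as in the paper, and part (2) runs through the MRC fibration, Theorem \ref{Thq} and Lemma \ref{ps}---but the two decisive exclusions are argued by genuinely different means. To rule out $q(X)=2$, the paper takes a $g^s$-stable curve class $C\subset A$ (via Lemma \ref{Abel}), restricts to the ruled surface $X_C=\alb_X^{-1}(C)$, shows $v_g|X_C=0$ because $F$ and $K_{X_C}$ span $\NS_{\R}(X_C)$, and then derives a contradiction from Lemma \ref{DS}(1) since $v_g$ would be proportional to the $g^*$-fixed class $X_C$; you instead use that a smooth $\P^1$-fibration over $A$ satisfies Leray--Hirsch with the $g^*$-invariant class $-K_X$, so every eigenvalue of $g^*|H^*(X,\Q)$ already occurs on $H^*(A,\Q)$ and null entropy of the parabolic $g|A$ forces null entropy of $g|X$. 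That is cleaner and avoids the curve-by-curve intersection computation. For the bound $\rank\,\Pic(F)\ge 11$, the paper assumes $K_F^2\ge 0$, uses the Hodge index theorem on $\NS(F)$ to get $v_g|F\equiv cK_F$ and then Lemma \ref{DS}(1) to get $v_g\equiv eF$, each contradicting $g^*v_g=d_1v_g$; you work inside the lattice $L$ of Lemma \ref{salem}(2), where $v=v_g|F$ is a nonzero isotropic $d_1$-eigenvector and $K_F$ is a $g^*$-fixed vector of $v^\perp$ not lying on $\R v$, so the signature $(1,r)$ gives $K_F^2<0$ directly. The substance is the same (both rest on nefness of $v_g$ plus the index theorem), but your packaging through Lemma \ref{salem}(2) is more uniform; the one point you should make explicit is why $K_F\not\equiv 0$ on a rational surface (Riemann--Roch gives $-K_F$ effective and nonzero once $K_F^2\ge 0$), which is needed to conclude $K_F\notin\R v$.
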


\begin{setup}
{\bf Proof of Theorem \ref{Thk}}.
\end{setup}

The assertion (1) follows from Lemma \ref{deligne}. For (2), in view
of (1), we may assume that $(X, g)$ is of imprimitively positive
entropy. Then the assertion (2) follows from Lemma \ref{3nef}. This
proves Theorem \ref{Thk}.

\begin{setup} {\bf Proof of Theorem \ref{Thq}}.
\end{setup}

We may assume that $q(X) > 0$. By the universal property of $A:=
\Alb(X)$, every $h \in \Aut(X)$ descends, via
the albanese map $\alb_X : X \rightarrow A$, to some $h|A \in \Aut_{\variety}(A)$.
By Lemma \ref{deligne}, $\kappa(\alb_X(X)) \le 0$. Thus,
by Ueno \cite{Ue} Lemma 10.1, $\kappa(\alb_X(X)) = 0$ and $\alb_X(X)
= A = \Alb(X)$, i.e., $\alb_X$ is surjective. Let $X \rightarrow X_0
\rightarrow A$ be the Stein factorization with $X_0 \rightarrow
A$ a finite surjective morphism from a normal variety $X_0$, and $X
\rightarrow X_0$ having connected fibres. Note that $\kappa(X_0) \ge
0$ by the ramification divisor formula for (the resolution of the
domain of) $X_0 \rightarrow A$ as in Iitaka \cite{Ii82} Theorem 5.5.
So by Lemma \ref{deligne}, $\kappa(X_0) = 0$. By the result of
Kawamata-Viehweg as in Kawamata \cite{Ka81} Theorem 4, $X_0
\rightarrow A$ is \'etale, so $X_0$ is an abelian variety too. By the
universal property of $A = \Alb(X)$, we have $X_0 = A$. Thus, $X
\rightarrow A = X_0$ has connected fibres. Theorem \ref{Thq} (1) is
proved. Now Theorem \ref{Thq} (2) and (3) follow from (1). If $q(X)
< \dim X$ then $g | A$ is rigidly parabolic by Lemma \ref{fib}; so
Theorem \ref{Thq} (4) follows from Lemma \ref{smooth}. This proves
Theorem \ref{Thq}.

\begin{setup} {\bf Albanese variety.}
\end{setup}
For a projective variety $Z$, we denote by $A(Z)$ or $\Alb(Z)$
the albanese variety $\Alb(Z')$ with $Z' \to Z$ a proper resolution.
This definition is independent of the choice of $Z'$,
and $A(Z)$ depends only on the birational equivalence class of $Z$.
If $Z$ is log terminal, then the composition $Z \cdots \to Z' \to A(Z)$
is a well defined morphism; see Lemma \ref{Q-t}. 

\par
\begin{setup} {\bf Proof of Theorem \ref{Thk=0}.}
\end{setup}
By Theorem \ref{Thq}, we
may assume that $q(X) < \dim X$, so $g | A(X)$ is rigidly parabolic
by Lemma \ref{fib}. The albanese map $\alb_{X}: X \rightarrow A(X)$
has connected fibre $F_1$ and is smooth and surjective; see Theorem
\ref{Thq}. 

\par
By the assumption, $X$ has a good terminal minimal model $\bar{X}$
with $s K_{\bar X} \sim 0$ for some $s > 0$.
We apply Lemma \ref{ext} to $\alb_X : X \to Y_1 := A(X)$.
Then there is a $g$-equivariant \'etale Galois extension $\tilde{Y}_1 \to Y_1$
from a torus $\tilde{Y}_1$,
such that $X_1 := X \times_{Y_1} \tilde{Y}_1$ is birational to
$\bar{X}_1 := \bar{F}_1 \times \tilde{Y}_1$ over $\tilde{Y}_1$, with $\bar{F}_1$ a good terminal
minimal model of $F_1$ and $s K_{{\bar F}_1} \sim 0$.
Also $g | X_1$ normalizes $G_1 | X_1$ ($\cong \Gal(\tilde{Y}_1/Y_1)$).

\par
Assume that $0 < q(F_1) < \dim F_1$.
By \cite{Ka81} Theorem 1, $\alb_{X_1} : X_1 \to A(X_1)$ is a surjective morphism
with connected smooth general fibre $F_2$.
By the universal property
of the albanese map, $\alb_{X_1} : X_1 \rightarrow A(X_1)$
is $\langle g, G_1 \rangle$-equivariant.
Both of the natural morphisms $X = X_1/G_1 \rightarrow Y_2:= A(X_1)/G_1$
and $A(X_1) \to Y_2$ are $g$-equivariant and surjective. Since $G_1$
acts freely on $\tilde{Y}_1$ and $A(X_1) = A(\bar{X}_1) = A(F_1) \times \tilde{Y}_1$,
the latter map is \'etale. By the same
reason, every general fibre of $X \rightarrow Y_2$ can be identified with a
fibre $F_2$, so it is connected.

\par
We apply Lemma \ref{ext} to $X \to Y_2$.
Then there is a $g$-equivariant \'etale Galois extension $\tilde{Y}_2 \to Y_2$
from a torus $\tilde{Y}_2$,
such that $X_2 := X \times_{Y_2} \tilde{Y}_2$ is birational to
$\bar{X}_2 := \bar{F}_2 \times \tilde{Y}_2$ with $\bar{F}_2$ a good terminal
minimal model of $F_2$ and $s K_{{\bar F}_2} \sim 0$.
Also $g | X_2$ normalizes $G_2 | X_2$ ($\cong \Gal(\tilde{Y}_2/Y_2)$).

\par
If $0 < q(F_2) < \dim F_2$, we can consider
$X \to Y_3 := A(X_2)/G_2$.
Continue this process, we can define $X \to Y_{i+1} := A(X_i)/G_i$
with $G_i \cong \Gal(\tilde{Y}_i/Y_i)$ the Galois group
of the \'etale Galois extension $\tilde{Y}_i \to Y_i$ from a torus $\tilde{Y}_i$,
such that $X_i := X \times_{Y_i} \tilde{Y}_i$ is birational to
$\bar{X}_i := \bar{F}_i \times \tilde{Y}_i$ with $s K_{{\bar X}_i} \sim 0$,
where $\bar{F}_i$ a good terminal model of a general fibre $F_i$
of $X \to Y_i$ (and also of $X_i \to \tilde{Y}_i$
and $X_{i-1} \to A(X_{i-1})$).
\par
Note that $q(F_i) \le \dim F_i$ because $\kappa(F_i)
= \kappa(\bar{F}_i) = 0$ (see \cite{Ka81} Theorem 1).
Also $\dim X \ge \dim Y_{i+1} = \dim Y_i + q(F_i)$.
So there is an $m \ge 1$ such that $q(F_m)$ equals either $0$ or $\dim
F_m$.
\par
Consider the case where $q(F_m) = 0$ and $\dim F_m > 0$.
Then $\Alb(X_m) = \Alb(\bar{X}_m) = \tilde{Y}_m$, and
by Lemma \ref{ext},
Theorem \ref{Thk=0} Case(2) occurs with $\tilde{X} = X_m$,
$F = \bar{F}_m$ and $T = \tilde{Y}_m$. Indeed, for the second part of
Theorem \ref{Thk=0} (2), since $X \to Y_m$ is $g$-equivariant,
$g | Y_m$ is rigidly parabolic by Lemma \ref{fib} and hence
$g | T$ is parabolic by Lemma \ref{quot} (with $d_1(g|T) = 1$).
The first dynamical degrees satisfy $d_1(g|X)
= d_1(g | \tilde{X}) = d_1((g|F) \times (g|T)) = d_1(g | F)$
by Lemma \ref{quot}, Guedj \cite{Gu03} Proposition 1.2 and the K\"unneth formula
for $H^2$ as in Griffiths-Harris \cite{GH} page 58;
see also \cite{Di} Proposition 5.7.
Also $\dim X = \dim F + \dim \tilde{Y}_m \ge \dim F + \dim Y_1 = \dim F + q(X)$.
\par
Consider the case ($q(\bar{F}_m) =$) $q(F_m) = \dim F_m$. Then $q(X_m) = \dim X_m$. By
Kawamata \cite{Ka81} Theorem 1, the albanese map $X_m \rightarrow
\tilde{X}' := A(X_m) = A(F_m) \times \tilde{Y}_m$ is a $\langle g,
G_m\rangle$-equivariant birational surjective  morphism. It induces
a $g$-equivariant birational morphism $X = X_m/G_m \rightarrow X'
:= \tilde{X}'/G_m$, since $g$ normalizes $G_m$ as in Lemma \ref{ext}.
Also $G_m$ acts freely on $\tilde{Y}_m$, and hence the quotient map
$\tilde{X}' \rightarrow X'$ is \'etale. Note that $X' \times_{Y_m}
\tilde{Y}_m \cong \tilde{X}'$ over $\tilde{Y}_m$, since both sides
are finite (\'etale) over $X'$ and birational to each other,
by the construction of $X_m$.
Thus
Case(1) of Theorem \ref{Thk=0} occurs with the \'etale Galois cover
$\tilde{X}' \rightarrow X'$. This proves Theorem \ref{Thk=0}.

\begin{setup} {\bf Proof of Theorem \ref{Thk=-1}.}
\end{setup}
Let $\MRC_{X'} : X' \cdots \rightarrow Z$ be a maximal rationally
connected fibration; see \cite{Cp}, or \cite{Ko} IV Theorem 5.2. The
construction there, is in terms of an equivalence relation, which is
preserved by $g | X'$. So we can replace $(X', g)$ by a
$g$-equivariant blowup $(X, g)$ such that $\MRC_{X} : X
\rightarrow Z$ is a well defined $g$-equivariant surjective morphism with
general fibre rationally connected,
$g | X \in \Aut(X)$, and $X$, $Z$ projective manifolds; see
Hironaka \cite{Hi}. Further, $Z$ is non-uniruled by
Graber-Harris-Starr \cite{GHS} (1.4). The natural
homomorphism $\pi_1(X) \rightarrow \pi_1(Z)$ is an isomorphism; see
Campana \cite{Cp} or Koll\'ar \cite{Ko}. So $q(X) = q(Z)$. If $\dim Z
= 0$, then Case(1) of the theorem occurs.
\par
Consider the case $\dim Z > 0$.
Since $X'$ is uniruled by the assumptions of the theorem
(see Remark \ref{remark}), $\dim Z < \dim X'$.
Since $Z$ is non-uniruled, we have $\kappa(Z) \ge 0$
by the assumption. So $\kappa(Z) = 0$ by Lemma \ref{deligne}.
\par
Now $g | Z$ is rigidly parabolic by Lemma \ref{fib}. If $q(Z) = 0$
then $\dim Z \ge 3$ because $\kappa(Z) = 0$ and by Lemma \ref{ps}.
So Case(2) of the theorem occurs.
\par
Suppose $q(Z) > 0$. Since an abelian variety contains no rational
curves, $\alb_X :$ $X \rightarrow A := \Alb(X)$ factors as $\MRC_X:
X \rightarrow Z$ and $\alb_Z : Z \rightarrow \Alb(Z)$ $= A$;
see Lemma \ref{Q-t} and \cite{Ka81} Lemma 14.
By Lemma \ref{fib}, $g | A$ is rigidly parabolic. Also $\alb_X$ and $\alb_Z$
are smooth and surjective with connected fibres by Theorem \ref{Thq}.

\par
We apply Theorem \ref{Thk=0} to $(Z, g)$, so two cases there occur;
in the first case there, we may assume that $K_Z$ is torsion after replacing
$Z$ by its $g$-equivariant blowdown.
Let $\tilde{Z} \rightarrow Z$ be the $g$-equivariant \'etale Galois extension
as there. So either $\tilde{Z}$ is a torus, or
$\tilde{Z} \to F \times \tilde{A}$ is a well defined birational morphism
over $\tilde{A} := \Alb(\tilde{Z})$
(after replacing $Z$ and $X$ by their $g$-equivariant blowups),
with $q(F) = 0$ etc as described there.
Set $\tilde{X} := X
\times_Z \tilde{Z}$. Then the projection $\tilde{X} \rightarrow
\tilde{Z}$ coincides with $\MRC_{\tilde X}$. So $\alb_{\tilde X} :
\tilde{X} \rightarrow \Alb(\tilde{X}) = \tilde{A}$ factors as
$\tilde{X} \rightarrow \tilde{Z}$ and $\alb_{\tilde Z} : \tilde{Z}
\rightarrow \tilde{A}$. If $\tilde{Z}$ is a torus,
then Case(3) of Theorem \ref{Thk=-1} occurs. In the situation $\tilde{Z} \to F
\times \tilde{A}$, Case(4) of
Theorem \ref{Thk=-1} occurs in view of Theorem \ref{Thk=0}. This
proves Theorem \ref{Thk=-1}.

\begin{setup} {\bf Proofs of Theorems \ref{3k} $\sim$ \ref{Thsalem} and \ref{3k=-1}, and
Corollary \ref{3-4k=0}.}
\end{setup}
Corollary \ref{3-4k=0} and Theorem \ref{3k=-1} (1) follow respectively from
Theorems \ref{Thk=0} and \ref{Thk=-1}, while Theorem \ref{3k}
follows from Lemma \ref{3nef}, Corollary \ref{3-4k=0}, Theorem
\ref{3k=-1}, and Lemma \ref{salem} applied to $\alb_X$.
Theorem \ref{Thg} follows from Theorem \ref{3k} (and its proof).
Theorem \ref{Thsalem} follows from Lemma \ref{3nef}, Corollary \ref{3-4k=0},
Theorem \ref{3k=-1}, the proof of Lemma \ref{salem}, and Lemma
\ref{quot}. See Remark \ref{remark}.
\par
For Theorem \ref{Thp}, by Theorems \ref{Thk} and
\ref{3k=-1}, we have only to consider the case in
Theorem \ref{3k=-1} (2). But then $\pi_1(X) = \pi_1(\Alb(X)) = \Z^{\oplus 2}$
since general (indeed all) fibres of $\alb_X : X \to \Alb(X)$
are smooth projective rational surfaces (see \cite{Cp} or \cite{KoMM}).

\par
We now prove Theorem \ref{3k=-1} (2) directly. We follow the proof
of Theorem \ref{Thk=-1}. Let $\MRC_{X}: X \cdots \rightarrow Y$ be
a maximal rationally connected fibration, where $\kappa(Y) \ge 0$. Replacing $Y$ by
a $g$-equivariant modification, we may assume that $Y$ is smooth and minimal.
Since $\kappa(X) = -\infty$,
our $X$ is uniruled (see Remark \ref{remark}). So $\dim Y < \dim X$. Our
$\alb_{X} : X \rightarrow A: = \Alb(X)$ is smooth and surjective (with
connected fibre) and factors as $\MRC_X : X \cdots \rightarrow Y$ and
$\alb_Y : Y \rightarrow \Alb(Y) = A$; also $3 = \dim X > \dim Y \ge
q(Y) = q(X)
> 0$ (by the assumption of Theorem \ref{3k=-1} (2)), $\kappa(Y) = 0$
and $g | Y$ and $g | A$ are rigidly parabolic; see the proof of
Theorem \ref{Thk=-1}. Theorem \ref{3k=-1} (2) follows from the two
claims below.

\begin{claim}
In Theorem \ref{3k=-1} (2), $q(X) = 1$.
\end{claim}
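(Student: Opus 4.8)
The plan is to rule out $q(X) = 2$; since we already know $q(X) = q(Y)$ with $0 < q(Y) \le \dim Y < \dim X = 3$, this will force $q(X) = 1$. So assume for contradiction that $q(X) = 2$. Then $\dim Y \ge q(Y) = 2$ and $\dim Y < 3$, so $\dim Y = 2$ and $q(Y) = 2$. Here $Y$ is a smooth \emph{minimal} surface with $\kappa(Y) = 0$ and $q(Y) = 2$, so the Enriques--Kodaira classification forces $Y$ to be an abelian surface; hence $\alb_Y : Y \to A = \Alb(Y)$ is an isomorphism and $Y = A$. Since $\alb_X$ factors through $\MRC_X$ followed by $\alb_Y = \id$, we get $\MRC_X = \alb_X : X \to A$, which is smooth and surjective with connected fibres (as established, using $q(X) = 2 < 3$; see Theorem \ref{Thq}). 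Its fibres are $1$-dimensional and rationally connected, hence copies of $\P^1$, so $\alb_X : X \to A$ is a smooth $\P^1$-fibration over the abelian surface $A$.

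Next I would analyse $g^*$ on $\NS_{\R}(X)$. Because a $\P^1$-fibration has relative Picard number $1$, there is a $g^*$-equivariant exact sequence
$$0 \to \alb_X^* \NS_{\R}(A) \to \NS_{\R}(X) \to \R \to 0,$$
where $g^*$ preserves the subspace since $\alb_X$ is $g$-equivariant ($g^* \alb_X^* = \alb_X^* (g|A)^*$). On this subspace the eigenvalues of $g^*$ coincide with those of $(g|A)^*$ on $\NS_{\R}(A)$ via the injection $\alb_X^*$; and since $g|A$ is rigidly parabolic, hence parabolic and of null entropy, Convention \ref{entropy} gives that all of these eigenvalues have modulus $1$. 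Write $\alpha$ for the scalar by which $g^*$ acts on the $1$-dimensional quotient.

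Finally, $g^*$ preserves the lattice $\NS(X)/(\torsion)$ and is invertible there, so $\det(g^* | \NS(X)/(\torsion)) = \pm 1$. As every eigenvalue on the subspace $\alb_X^*\NS_{\R}(A)$ has modulus $1$, this forces $|\alpha| = 1$ as well. Hence $\rho(g^* | \NS_{\R}(X)) = 1$, so $d_1(g|X) = 1$ by Lemma \ref{Ddeg}, and $g|X$ is of null entropy. This contradicts the hypothesis of Theorem \ref{3k=-1}(2) that $(X, g)$ is of primitively positive entropy (in particular of positive entropy). Therefore $q(X) \ne 2$, and $q(X) = 1$.

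The step I expect to be the main obstacle is justifying the exact sequence, namely that a smooth $\P^1$-fibration raises the Picard number by exactly one even when it is not Zariski-locally a $\P^1$-bundle, since a Brauer obstruction may prevent the relative $\OO(1)$ from lifting to a global line bundle. I would handle this by noting that the relative anticanonical class $\omega_{X/A}^{-1}$ is a genuine line bundle of fibre-degree $2$, supplying a class complementary to $\alb_X^*\NS_{\R}(A)$, while the relative Picard number of a $\P^1$-fibration is $1$; this pins down the quotient as $\R$ and makes the sequence $g^*$-equivariant.
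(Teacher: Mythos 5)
Your proof is correct, but it takes a genuinely different route from the paper's. Both arguments first reduce to the situation where $q(X)=2$ forces $Y=A=\Alb(X)$ and $\alb_X=\MRC_X:X\to A$ to be a smooth $\P^1$-fibration over an abelian surface, and both end by contradicting $d_1(g|X)>1$; they diverge after that. The paper picks the nef eigenvector $L=v_{(g|X)}$ with $g^*L=d_1L$, uses Lemma \ref{Abel} to produce a curve $C\subset A$ with $g^*C\equiv C$, restricts $L$ to the ruled surface $X_C=\alb_X^{-1}(C)$ (whose N\'eron--Severi group is spanned by the fibre class and $K_{X_C}$) to get $L|X_C=0$, and then invokes the Dinh--Sibony Lemma \ref{DS}(1) to force $L$ proportional to the $g^*$-invariant nef class $X_C$, a contradiction. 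You instead control the \emph{entire} action $g^*|\NS_{\R}(X)$ at once: the $g^*$-equivariant exact sequence $0\to\alb_X^*\NS_{\R}(A)\to\NS_{\R}(X)\to\R\to 0$ (valid because $R^1(\alb_X)_*\Q=0$ and $\omega_{X/A}^{-1}$ has fibre-degree $2$, so the fibre-degree-zero part of $\NS_{\R}(X)$ is exactly $\alb_X^*\NS_{\R}(A)$ — your handling of the Brauer issue is the right one), together with null entropy of $g|A$ and $\det(g^*|\NS(X)/(\torsion))=\pm1$, pins every eigenvalue to modulus $1$, so $d_1(g|X)=1$ by Lemma \ref{Ddeg}(2). (In fact the quotient eigenvalue is exactly $1$, since $g$ permutes fibres and hence preserves fibre degree, so you do not even need the determinant step.) Your version is more self-contained — it avoids Lemmas \ref{Abel} and \ref{DS} entirely — at the cost of needing the global computation $\rho(X)=\rho(A)+1$, which relies on smoothness of the fibration via the Leray spectral sequence; the paper's argument only needs to understand $\NS$ of a single surface $X_C$ and the positivity machinery of \cite{DS04}, which is the tool it leans on throughout.
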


\begin{proof}
Suppose the contrary that $q(X) \ge 2$. Then $\dim Y = q(Y) = q(X) =
2$, so $Y = A$ and $\alb_X = \MRC_X$, by Theorem \ref{Thq}
and the proof of Theorem \ref{Thk=-1}.
By Theorem \ref{Thq}, $\alb_X : X \rightarrow A$ is
surjective with every fibre $F$ a smooth projective curve. $F$ is a
rational curve by the definition of $\MRC_X : X \rightarrow Y = A$.
Take a nef $L = v_{(g | X)} \in \Nef(X)$ as in Lemma \ref{PF} such
that $g^* L = d_1 L$ with $d_1 = d_1(g | X) > 1$. By Lemma
\ref{Abel}, either $g^s | A$ is a translation and we let $C$ be a
very ample divisor on $A$, or $g^s(C) = C + t_0$ for an elliptic
curve $C$ and $t_0 \in A$. Rewrite $g^s$ as $g$ and we always have
$g^*C \equiv C$ in $N_1(A) = \NS_{\R}(A)$.
\par
Let $X_C \subset X$ be the inverse of $C$, a Hirzebruch surface.
Then the restriction
$\alb_{X} | X_C : X_C \rightarrow C$ is a ruling. Note that $g^*F =
F$ in $N_1(X)$. Also $g^* = \id$ on $H^6(X, \R) \cong \R$. So $L . F
= g^*L . g^*F = d_1 L . F$, whence $L . F = 0$. By the adjunction
formula, $K_{X_C} = (K_X + X_C) | X_C = K_X | X_C + e F$ with the
scalar $e = C^2 \ge 0$. Note that $\R \cong H^6(X, \R) \ni L . K_X .
X_C = g^*L . g^*K_X . g^*X_C = d_1 L . K_X . X_C$, whence $0 = L .
K_X . X_C = (L | X_C) . (K_X | X_C) = (L | X_C) . K_{X_C}$ since $L
. F = 0$. Now $(L|X_C) . F = L . F = 0$ and $(L | X_C) . K_{X_C} =
0$ imply that $L . X_C = L | X_C = 0$ in the lattice $\NS_{\R}(X_C)$
because the fibre $F$ and $K_{X_C}$ span this lattice. Hence $X_C$
and $L$ are proportional in $\NS_{\R}(X)$ by Lemma \ref{DS} (1),
noting that any curve like $C$ in the abelian surface $A$ is nef and
hence $X_C$ is nef. But $g^* X_C = X_C$ while $g^*L = d_1 L$ with
$d_1 > 1$, so $X_C$ and $L$ are not proportional in $\NS_{\R}(X)$.
Thus, the claim is true.
\end{proof}

\begin{claim}
In Theorem \ref{3k=-1} (2), every fibre $X_a$ of $\alb_X : X \rightarrow A =
\Alb(X)$ is a smooth projective rational surface
(so $\NS(X) = \Pic(X)$) with non-big $-K_{X_a}$.
Further, $\rank \, \Pic(X_a) \ge 11$.
\end{claim}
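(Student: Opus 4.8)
The plan is to combine the fibration structure obtained just above with the Salem-lattice analysis underlying the proof of Lemma~\ref{salem}(2).

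First I would pin down the base and the fibres. Since the preceding Claim gives $q(X)=1$, the target $A=\Alb(X)$ is an elliptic curve, and by Theorem~\ref{Thq} the map $\alb_X:X\to A$ is smooth and surjective with connected fibres; in particular every fibre $X_a$ is a smooth projective surface. To see that $\MRC_X$ coincides with $\alb_X$ I would rule out $\dim Y=2$: otherwise $Y$ would be a smooth minimal surface with $\kappa(Y)=0$, $q(Y)=q(X)=1$ and $g\mid Y$ rigidly parabolic, contradicting Lemma~\ref{ps}, by which a rigidly parabolic surface with $\kappa=0$ has $q=2$. Hence $\dim Y=1$, $Y=A$ and $\alb_X=\MRC_X$, so the general fibre is a rationally connected surface, i.e. a rational surface. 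Because $\alb_X$ is smooth over the connected base $A$, the invariant $q(X_a)=h^1(\OO_{X_a})$ and the plurigenera are constant in $a$; they vanish on the general (rational) fibre, so every fibre has $q(X_a)=0$ and $\kappa(X_a)=-\infty$, hence is rational by Castelnuovo's criterion. For a rational surface $\Pic(X_a)=\NS(X_a)$, which gives the parenthetical assertion.

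Next I would import the lattice from the proof of Lemma~\ref{salem}(2). Applying that lemma to $f=\alb_X:X\to A$ yields the non-degenerate lattice $L=(\NS(X)\mid F)/(\torsion)\subset\NS(F)$ of signature $(1,r)$ (with $F=X_a$), on which $g^*$ acts by the isometry $g^*(D\mid F)=(g^*D)\mid F$, together with nonzero nef classes $v:=v_{(g\mid X)}\mid F$ and (by the same argument) $w:=v_{(g^{-1}\mid X)}\mid F$ in $L\otimes\R$ satisfying $g^*v=d_1v$ and $g^*w=d_1^{-1}w$, where $d_1=d_1(g)>1$; see Lemma~\ref{PF}. Two observations drive the rest. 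First, $g^*K_X=K_X$ and $X_a\mid X_a=0$ give $K_F=K_X\mid F\in L$ with $g^*K_F=K_F$. Second, since $g^*$ preserves $\langle\,,\,\rangle_L$, the relation $K_F\cdot v=(g^*K_F)\cdot(g^*v)=d_1\,(K_F\cdot v)$ forces $K_F\cdot v=0$, and likewise $K_F\cdot w=0$ and $v^2=w^2=0$.

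For non-bigness of $-K_F$ I would argue by contradiction: if $-K_F$ were big, then by Kodaira's lemma $-K_F\sim_{\Q}A'+E'$ with $A'$ an ample $\Q$-divisor and $E'$ effective, so $(-K_F)\cdot v=A'\cdot v+E'\cdot v>0$, because $v$ is nef and nonzero on the surface $F$ (the compatibility of $\langle\,,\,\rangle_L$ with the intersection form on $F$ identifies this with the $L$-pairing). This contradicts $K_F\cdot v=0$. For the Picard-number bound I would note that $v,w$ are non-proportional isotropic classes (distinct eigenvalues $d_1\ne d_1^{-1}$), hence span a hyperbolic plane $P$ of signature $(1,1)$ in $L\otimes\R$, since a form of signature $(1,r)$ admits no $2$-dimensional totally isotropic subspace. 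As $L$ has signature $(1,r)$, the complement $P^{\perp}$ is negative definite, and $K_F\in P^{\perp}$ with $K_F\ne0$ (a rational surface has non-trivial canonical class) gives $K_F^2<0$. Finally, Noether's formula for the rational surface $F$, namely $K_F^2=10-\rank\Pic(F)$, yields $\rank\Pic(X_a)=10-K_F^2>10$, i.e. $\rank\Pic(X_a)\ge11$. I expect the main obstacle to be the bookkeeping that legitimizes $L$ and its isometry — verifying that $g^*\mid L$ preserves $\langle\,,\,\rangle_L$, that $v,w$ restrict to nonzero classes on a fixed fibre, and that the $L$-pairings agree with genuine intersection numbers on $F$ — essentially re-running the opening of the proof of Lemma~\ref{salem}(2); everything after that is formal linear algebra on the signature-$(1,r)$ lattice together with Noether's formula.
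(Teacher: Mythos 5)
Your proof is correct, and the first half (ruling out $\dim Y=2$ via Lemma \ref{ps}, identifying $\alb_X$ with $\MRC_X$, and concluding that every fibre is a smooth rational surface) is essentially the paper's argument. The second half, however, takes a genuinely different route. The paper argues by contradiction with the \emph{single} nef eigenvector $L=v_{(g|X)}$: assuming $-K_F$ big or $K_F^2\ge 0$, it reduces to the two cases ``$-K_F$ big'' or ``$K_F^2=0$'', derives $(L|F)\cdot K_F=0$, invokes the Hodge index theorem on $F$ to force $L|F\equiv cK_F$, kills $c$ by comparing $g^*$-eigenvalues, and then gets the final contradiction from Lemma \ref{DS}(1) applied on the threefold $X$ (since $L\cdot F=0$ would make $L$ and the $g^*$-fixed class $F$ proportional). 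You instead work directly in the signature-$(1,r)$ lattice $L=(\NS(X)|F)/(\torsion)$ of Lemma \ref{salem}(2), use \emph{both} eigenvectors $v=v_g|F$ and $w=v_{g^{-1}}|F$, observe that they are non-proportional isotropic classes spanning a hyperbolic plane $P$, and read off $K_F^2<0$ from the negative definiteness of $P^\perp\ni K_F\ne 0$; non-bigness then comes separately from Kodaira's lemma and $K_F\cdot v=0$. Your version buys a cleaner, case-free derivation of the sharp inequality $K_F^2<0$ (hence $\rank\Pic\ge 11$ by Noether) and avoids Lemma \ref{DS} entirely, at the cost of having to set up the restricted lattice, its isometry, and the nonvanishing of \emph{two} restricted classes --- bookkeeping you correctly flag and which the paper has already done in proving Lemma \ref{salem}(2). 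One pedantic point: the eigenvalue of $w$ is a priori $d_1(g^{-1})^{-1}$ rather than $d_1(g)^{-1}$; this is harmless since all you use is that it differs from $1$ and from $d_1(g)$, or one can quote the equality $d_1(g)=d_1(g^{-1})$ from Lemma \ref{salem}(2) itself.
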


\begin{proof}
If $\dim Y > \dim A = q(X) = 1$, then $Y$ is a surface with
$\kappa(Y) = 0$, $q(Y) = 1$ and $g | Y$ rigidly parabolic, which
contradicts Lemma \ref{ps}. Thus $\dim Y = 1 = q(Y)$, so $Y =
\Alb(Y) = A$ and $\alb_X = \MRC_X$ by Theorem \ref{Thq}. 
Therefore, every fibre $F = X_a$ of $X \rightarrow A$
is a smooth projective rational surface;
see Koll\'ar \cite{Ko} IV Theorem 3.11.
\par
Assume the contrary that $-K_F$ is big or $\rank \, \Pic(F) \le 10$
(i.e., $K_F^2 \ge 0$)
and we shall derive a contradiction.
If $K_F^2 \ge 1$, then $-K_F$ is big by the Riemann-Roch theorem applied
to $-nK_F$. Thus we may assume that either $-K_F$ is big or $K_F^2 = 0$
and shall get a contradiction.
\par
As in the proof of the previous claim, for
$K_F = K_X | F$ and $L := v_{(g|X)}$, we have $0 = L . K_X . F =
(L | F) . K_F$, so $L | F \equiv c K_F = c K_X | F$ for some scalar $c$
by the Hodge index theory; see the proof of \cite{BHPV} IV (7.2).
If $c \ne 0$, applying $g^*$, we get $d_1(g) = 1$, absurd.
Hence $c = 0$ and $L . F = 0$. Then $L \equiv e F$ for some scalar $e > 0$
by Lemma \ref{DS}. Applying $g^*$, we get the same contradiction.
This proves the claim
and also Theorem \ref{3k=-1}.
\end{proof}

\end{large}

\end{document}